\newtheorem{theorem}{Theorem}[section]
\newtheorem{lemma}[theorem]{Lemma}
\newtheorem{prop}[theorem]{Proposition}
\newtheorem{assumption}[theorem]{Assumption}
\newtheorem{corro}[theorem]{Corollary}
\theoremstyle{definition}
\newtheorem{definition}[theorem]{Definition}
\theoremstyle{remark}
\newtheorem{remark}[theorem]{Remark}
\numberwithin{equation}{section}
\newcommand{\al}{\ensuremath{\alpha}}
\newcommand{\be}{\ensuremath{\beta}}
\newcommand{\ga}{\ensuremath{\gamma}}
\newcommand{\de}{\ensuremath{\delta}}
\newcommand{\ep}{\ensuremath{\epsilon}}
\newcommand{\ze}{\ensuremath{\zeta}}
\renewcommand{\th}{\ensuremath{\theta}}
\newcommand{\ka}{\ensuremath{\kappa}}
\newcommand{\si}{\ensuremath{\sigma}}
\newcommand{\om}{\ensuremath{\omega}}
\newcommand{\ve}{\ensuremath{\varepsilon}}
\newcommand{\vp}{\ensuremath{\varphi}}
\newcommand{\De}{\ensuremath{\Delta}}
\newcommand{\Si}{\ensuremath{\Sigma}}
\newcommand{\Om}{\ensuremath{\Omega}}
\newcommand{\cD}{\ensuremath{\mathcal D}}
\newcommand{\cE}{\ensuremath{\mathcal E}}
\newcommand{\cF}{\ensuremath{\mathcal F}}
\newcommand{\cH}{\ensuremath{\mathcal H}}
\newcommand{\cI}{\ensuremath{\mathcal I}}
\newcommand{\cL}{\ensuremath{\mathcal L}}
\newcommand{\cN}{\ensuremath{\mathcal N}}
\newcommand{\cT}{\ensuremath{\mathcal T}}
\newcommand{\bbN}{\ensuremath{\mathbb N}}
\newcommand{\bbP}{\ensuremath{\mathbb P}}
\newcommand{\bbR}{\ensuremath{\mathbb R}}
\newcommand{\bbZ}{\ensuremath{\mathbb Z}}
\newcommand{\me}{\ensuremath{\mathrm{e}}}
\newcommand{\md}{\ensuremath{\mathrm{d}}}
\newcommand{\mD}{\ensuremath{\mathrm{D}}}
\newcommand{\scpr}[3]{%
  \ensuremath{%
    \big\langle
      #1, #2
    \big\rangle_{\raisebox{0.1ex}{$\scriptstyle \ell^{\raisebox{.1ex}{$\scriptscriptstyle 2$}} (#3)$}}
  }
}
\newcommand{\norm}[3]{%
  \ensuremath{%
    \left\lVert
      #1
    \right\rVert_{\raisebox{-.0ex}{$\scriptstyle \ell^{\raisebox{.2ex}{$\scriptscriptstyle #2$}} (#3)$}}
  }
}
\newcommand{\Norm}[2]{%
  \ensuremath{%
    \left\lVert
      #1
    \right\rVert_{\raisebox{-.0ex}{$\scriptstyle #2$}}
  }
}
\DeclareMathOperator{\mean}{\mathbb{E}}
\DeclareMathOperator{\Mean}{\mathrm{E}}
\DeclareMathOperator{\prob}{\mathbb{P}}
\DeclareMathOperator{\Prob}{\mathrm{P}}
\DeclareMathOperator{\supp}{\mathrm{supp}}
\DeclareMathOperator{\sign}{\mathrm{sign}}
\newcommand{\av}[1]{\mathop{\mathrm{av}}(#1)}
\newcommand{\ldef}{\ensuremath{\mathrel{\mathop:}=}}
\newcommand{\indicator}{%
  \ensuremath{%
    \mathchoice{1\mskip-4mu\mathrm l}
    {1\mskip-4mu\mathrm l}
    {1\mskip-4.5mu\mathrm l}
    {1\mskip-5mu\mathrm l}
  }
}
\begin{document}

\title[Quenched CLT for the dynamic RCM]{Quenched invariance principle for random walks with time-dependent ergodic degenerate weights}


\author{Sebastian Andres}
\address{University of Cambridge}
\curraddr{Wilberforce Road, Cambridge CB3 0WB}
\email{s.andres@statslab.cam.ac.uk}
\thanks{}

\author{Alberto Chiarini}
\address{Aix-Marseille Universit\'e (I2M)}
\curraddr{32, rue Joliot Curie, Marseille}
\email{alberto.chiarini@univ-amu.fr}
\thanks{}

\author{Jean-Dominique Deuschel}
\address{Technische Universit\"at Berlin}
\curraddr{Strasse des 17. Juni 136, 10623 Berlin}
\email{deuschel@math.tu-berlin.de}
\thanks{}

\author{Martin Slowik}
\address{Technische Universit\"at Berlin}
\curraddr{Strasse des 17. Juni 136, 10623 Berlin}
\email{slowik@math.tu-berlin.de}
\thanks{}

\subjclass[2010]{60K37; 60F17; 82C41}

\keywords{time dependent dynamics, random walk, Moser iteration}

\date{\today}

\dedicatory{}

\begin{abstract}
  We study a continuous-time random walk, $X$, on $\bbZ^d$ in an environment of dynamic random conductances taking values in $(0, \infty)$.  We assume that the law of the conductances is ergodic with respect to space-time shifts.  We prove a quenched invariance principle for the Markov process $X$ under some moment conditions on the environment.  The key result on the sublinearity of the corrector is obtained by Moser's iteration scheme.
\end{abstract}

\maketitle

\tableofcontents

\section{Introduction}
Random walks in random environment is a topic of major interest in probability theory. A specific model for such a random walks that has been intensively studied during the last decade is the Random Conductance Model (RCM). The question whether a quenched invariance principle or quenched functional central limit theorem (QFCLT) holds is of particular interest.  In the case of an environment generated by static i.i.d.\ random variables this question has been object of very active research (see \cite{ABDH12, Bi11} and references therein). Recently, in \cite{ADS15} a QFCLT has been proven for random walks under general ergodic conductances satisfying a certain moment condition.

Quenched invariance principles have also been shown for various models for random walks evolving in dynamic random environments (see \cite{An14, BZ06, BMP04, DL09, JR11, RV13, RS05}). Here analytic, probabilistic and ergodic techniques were invoked, but assumptions on the ellipticity and the mixing behaviour of the environment remained a pivotal requirement. For instance, the QFCLT for the time-dynamic RCM in \cite{An14} required strict ellipticity, i.e.\ the conductances are almost surely uniformly bounded and bounded away from zero,  as well as polynomial mixing, i.e.\ the polynomial decay of the correlations of the conductances in space and time.
In this paper we significantly relax these assumptions and show a QFLCT for the dynamic RCM with degenerate space-time ergodic conductances that only need to satisfy a moment condition. In contrast to the earlier results mentioned above the environment is \emph{not} assumed to be strictly elliptic or mixing or Markovian in time and we also do not require any regularity with respect to the time parameter.

\subsection{The setting}
Consider the $d$-dimensional Euclidean lattice, $(\bbZ^d, E_d)$, for $d \geq 2$, whose edge set, $E_d$, is given by the set of all non-oriented nearest neighbor bonds, that is $E_d = \{ \{x,y\} :  x,y \in \bbZ^d,\ |x-y| = 1 \}$.  For any $A \subset \bbZ^d$ we denote by $|A|$ the cardinality of the set $A$. Further, we denote by $B(x,r) \ldef \{y \in \bbZ^d \,:\, d(x,y) \leq \lfloor r \rfloor\}$ the closed ball with center $x$ and radius $r$ with respect to the natural graph distance $d$, and we write $B(r) \ldef B(0,r)$.  We also write $B_r$, $r > 0$, for closed balls in $\bbR^d$ with respect to the $\ell^1(\bbR^d)$-norm with center at the origin and radius $r$.  The canonical basis vectors in $\bbR^d$ will be denoted by $e_1, \ldots, e_d$.

The graph $(\bbZ^d, E_d)$ is endowed with time-dependent positive weights, that is, we consider a family $\om = \{\om_t(e) : e \in E_d, \, t\in \bbR \} \in \Om \ldef (0, \infty)^{\bbR \times E_d}$.  We refer to $\om_t(e)$ as the \emph{conductance} on an edge $e$ at time $t$. To simplify notation,  for $x,y\in \bbZ^d$ and $t\in \bbR$ we set $\om_t(x,y) = \om_t(y,x) = \om_t(\{x,y\})$ if $\{x,y\} \in E_d$ and $\om_t(x,y) = 0$ otherwise.  A \emph{space-time shift} by $(s,z) \in \bbR\times \bbZ^d$ is a map $\tau_{s,z}\!: \Om \to \Om$ defined by
\begin{align}
  \big(\tau_{s,z}\, \om \big)_t(\{x,y\})
  \;\ldef\;
  \om_{t+s}(\{x+z, y+z\}),
  \qquad
  \forall\,t \in \bbR,\; \{x,y\} \in E_d.
\end{align}
The set $\{\tau_{t,x} : x \in \bbZ^d, t \in \bbR \}$ together with the operation $\tau_{t,x} \circ \tau_{s,y} \ldef \tau_{ t+s,x+y}$ defines the \emph{group of space-time shifts}.

Finally, let $\Om$ be equipped with a $\si$-algebra, $\cF$, and a probability measure, $\prob$, so that $(\Om, \cF, \prob)$ becomes a probability space.  We also write $\mean$ to denote the expectation with respect to $\prob$.
\begin{assumption}\label{ass:P}
  Assume that $\prob$ satisfies the following conditions:
  \begin{enumerate}[(i)]
  \item $\mean\big[\om_t(e)\big]< \infty$ and $\mean\big[\om_t(e)^{-1}\big] < \infty$ for all $e \in E_d$ and $t \in \bbR$.
  \item $\prob$ is ergodic and stationary with respect to space-time shifts, that is $\prob \circ\, \tau_{t,x}^{-1} \!= \prob\,$ for all $x \in \bbZ^d$, $t\in \bbR$, and $\prob[A] \in \{0,1\}\,$ for any $A \in \cF$ such that $\prob[A \triangle \tau_{t,x}(A)] = 0\,$ for all $x \in \bbZ^d$, $t \in \bbR$.
  \item For every $A\in \mathcal{F}$ the mapping $(\om,t,x)\mapsto \indicator_A(\tau_{t,x}\om)$ is jointly measurable with respect to the $\sigma$-algebra $\mathcal{F}\otimes \mathcal{B}(\bbR)\otimes \mathcal{P}(\bbZ^d)$.
  \end{enumerate}
\end{assumption}
\begin{remark}
  (i) Note that Assumption~\ref{ass:P}(i) implies that $\prob\!\big[0 < \om_t(e) < \infty\big] = 1$ for all $e \in E_d$ and almost all $t \in \bbR$.

  (ii) The static model where the conductances are constant in time and ergodic with respect to space shifts is included as a special case.

  (iii) Under Assumption~\ref{ass:P} we have the following version of the ergodic theorem (see e.g.\ \cite[Chapter~6.2]{Kr85}). For any $\vp \in L^1(\Om, \prob)$,
  \begin{align}\label{eq:ergodic:time-space}
    \lim_{n \to \infty}
    \frac{1}{n^2}\,
    \int_0^{n^2}\!
      \frac{1}{|B(n)|} \sum_{x\in B(n)} \!\varphi(\tau_{t,x} \om)\,
    \md t
    \;=\;
    \mean\!\big[\vp\big]
    \qquad \prob\text{-a.s and in } L^1(\Om, \prob).
  \end{align}
\end{remark}
\begin{remark}\label{rem:time_shifts}
  Let $p\geq 1$ and $T_t\!: L^p(\Om, \prob) \to L^p(\Om, \prob)$ be the map defined by $T_t \vp \ldef \vp \circ \tau_{t,0}$.  Then Assumption~\ref{ass:P} (ii) implies that $\{T_t : t \in \bbR \}$ is a strongly continuous contraction group (SCCS) on $L^p(\Om, \prob)$, cf.\ \cite[Section 7.1]{ZKO94} for $p=2$.
\end{remark}
We denote by $D(\bbR, \bbZ^d)$ the space of $\bbZ^d$-valued c\`{a}dl\`{a}g functions on $\bbR$.  We will study the dynamic nearest-neighbour \emph{random conductance model}.  For a given $\om \in \Om$ and for $s \in \bbR$ and $x \in \bbZ^d$, let $\Prob_{s,x}^{\om}$ be the probability measure on $D(\bbR, \bbZ^d)$, under which the coordinate process $(X_t : t \in \bbR)$ is the continuous-time Markov chain on $\bbZ^d$ starting in $x$ at time $t = s$ with time-dependent generator  (in the $L^2$ sense) acting on bounded functions $f\!: \bbZ^d \to \bbR$ as
\begin{align}\label{eq:LV}
  \mathcal{L}_t^\om f(x)
  \;=\;
  \sum_{y \sim x} \om_t(x, y) \big( f(y) \,-\, f(x)\big).
\end{align}
That is, $X$ is the time-inhomogeneous random walk, whose time-dependent jump rates are given by the conductances.  Note that the counting measure, independent of $t$, is an invariant measure for $X$. Further, the total jump rate out of any site $x$ is not normalised, in particular the sojourn time at site $x$ depends on $x$. Therefore, the random walk $X$ is sometimes called the \emph{variable speed random walk (VSRW)}.

\subsection{Main Results}
We are interested in the $\prob$-almost sure or quenched long time behaviour of this process.  Our main objective is to establish a quenched functional central limit theorem for the process $X$ in the sense of the following definition.
\begin{definition} \label{def:QFCLT}
  Set $X_t^{(n)} \ldef \frac{1}{n} X_{n^2 t}$, $t \geq 0$. We say that the \emph{Quenched Functional CLT} (QFCLT) or \emph{quenched invariance principle} holds for $X$ if for $\prob$-a.e.\ $\om$ under $\Prob_{0,0}^{\om}$, $X^{(n)}$ converges in law to a Brownian motion on $\bbR^d$ with covariance matrix $\Si^2 = \Si \cdot \Si^T$.  That is, for every $T > 0$ and every bounded continuous function $F$ on the Skorohod space $D([0,T], \bbR^d)$, setting $\psi_n = \Mean_{0,0}^{\om}[F(X^{(n)})]$ and $\psi_\infty = \Mean_{0,0}^{\mathrm{BM}}[F(\Si \cdot W)]$ with $(W, \Prob_{\!0,0}^{\mathrm{BM}})$ being a Brownian motion started at $0$, we have that $\psi_n \rightarrow \psi_\infty$ $\prob$-a.s.
\end{definition}
As our main result we establish a QFCLT for $X$ under some additional moment conditions on the conductances. In order to formulate this moment condition we first define measures $\mu_t^{\om}$ and $\nu_t^{\om}$ on $\bbZ^d$ by
\begin{align*}
  \mu_t^{\om}(x) \;\ldef\; \sum_{x \sim y}\, \om_t(x,y)
  \qquad \text{and} \qquad
  \nu_t^{\om}(x) \;\ldef\; \sum_{x \sim y}\, \frac{1}{\om_t(x,y)}.
\end{align*}
In addition, for arbitrary numbers $p, p' \geq 1$ and any non-empty compact interval $I \subset \bbR$ and any finite $B \subset \bbZ^d$  let us introduce a space-time averaged $L^{p, p'}$-norm on functions $u\!: \bbR \times \bbZ^d \to \bbR$ by
\begin{align*}
  \Norm{u}{p, p', I \times B}
  \;\ldef\;
  \bigg(
    \frac{1}{|I|}
    \int_{I}
      \Norm{u(t, \cdot)}{p, B(n)}^{p'}
    \md t
  \bigg)^{\!\!1/p'}
  \mspace{-12mu}\ldef\;
  \bigg(
    \frac{1}{|I|}
    \int_{I}
      \bigg( \frac 1 {|B|} \sum_{x \in B} |u(t,x)|^{p} \bigg)^{\!\!p'/p}
    \md t
  \bigg)^{\!\!1/p'}\mspace{-18mu}.
\end{align*}
Note that by Jensen's inequality $ \Norm{u}{p, p', I \times B} \leq  \Norm{u}{q, q', I \times B}$ if $q\geq p$ and $q'\geq p'$.
\begin{assumption}\label{ass:moment}
  There exist  $p, p', q, q' \in (1, \infty]$ satisfying
  \begin{align} \label{eq:condpqprime}
    \frac{1}{p} \cdot \frac{p'}{p'-1} \cdot \frac{q'+1}{q'} \,+\, \frac{1}{q}
    \;<\;
    \frac{2}{d}
  \end{align}
  such that $\prob$-a.s.
  \begin{align}\label{eq:moment_condition}
    \limsup_{n \to \infty} \Norm{\mu^{\om}}{p, p', Q(n)}
    \;<\;
    \infty,
    \qquad
    \limsup_{n \to \infty} \Norm{\nu^{\om}}{q, q',Q(n)} < \infty,
  \end{align}
  where $Q(n)\ldef [0,n^2] \times B(n)$.
\end{assumption}
\begin{remark}
  (i) Assume that for any $x \in \bbZ^d$ with $|x| = 1$,
  \begin{align*}
    \om_0(0,x)
    \;=\;
    \mean\!\big[\om_0(0,x) \mid \cT\big]\,
    \mean\!\big[\om_0(0,x) \mid \cI\big],
  \end{align*}
  where $\cT$ denotes the $\si$-algebra of sets invariant under time-shifts and $\cI$ the $\si$-algebra of sets invariant under space-shifts. Then, a sufficient moment condition for \eqref{eq:moment_condition} to hold is
  \begin{align*}
    \mean\!\Big[\mean[\om_0(0,x)\mid \cT]^p \Big] \;<\; \infty,
    \qquad
    \mean\!\Big[\mean[\om_0(0,x)\mid \cI]^{p'} \Big] \;<\; \infty
  \end{align*}
  and
  \begin{align*}
    \mean\!\Big[\mean[\om_0(0,x)\mid \cT]^{-q}\Big] \;<\; \infty,
    \qquad
    \mean\!\Big[\mean[\om_0(0,x)\mid \cI]^{-q'}\Big] \;<\; \infty.
  \end{align*}
  Indeed, for all $|x| = 1$ the function $f_x(\om) \ldef \mean[\om_0(0,x)\mid \cT] $ is time-invariant and $g_x(\om) \ldef \mean[\om_0(0,x)\mid \cI] $ is space-invariant which yields
  \begin{align*}
    \Norm{\mu^\om}{p, p',Q(n)}
    \;\leq\;
    \sum_{|x| = 1} \Norm{f_x}{p, B(n)}\, \Norm{g_x}{p',[0,n^2]}
    \underset{n \to \infty}{\;\longrightarrow\;}
    \sum_{|x| = 1} \mean\!\big[ f_x^p\big]^{1/p}\, \mean\!\big[ g_x^{p'}\big]^{1/p'}
  \end{align*}
  by the ergodic theorem and similarly for $q,q'$.  In particular, notice that if the measure $\prob$ is space-ergodic we always have
  \begin{align*}
    \mean[\om_0(0,x)\mid \cI]^{p'} \;=\; \mean[\om_0(0,x)]^{p'} \;<\; \infty,
  \end{align*}
  so that we can choose $p'$ and $q'$ to be infinite.

  (ii) Clearly the example in (i) can be made more general by considering conductances which are a mixture of products  $f \cdot g$ where $f$ is time-invariant and $g$ is space invariant. For example let
  \begin{align*}
    \om_0(0,x)
    \;\ldef\;
    \sum_{i=1}^{N} f_{i,x}(\om)\, g_{i,x}(\om),
    \qquad |x|=1,
  \end{align*}
  with $f_{i,x}$ being time-invariant and $g_{i,x}$  space-invariant. In this case for \eqref{eq:moment_condition} to hold one needs to assume that
  \begin{align*}
    \max_{|x|=1,\,  i=1,\ldots, N}
    \left\{
      \mean\!\big[f_{i,x}^p\big],\, \mean\!\big[f_{i,x}^{-q}\big],\,
      \mean\!\big[g_{i,x}^{p'}\big],\,\mean\!\big[g_{i,x}^{-q'}\big]
    \right\}
    \;<\; \infty.
  \end{align*}

  (iii)  In the case $p' = p$ and $q'=q$ Assumption~\ref{ass:moment} directly translates into a moment condition, which does not involve any conditioning on invariant sets.  More precisely, if there exist $p, q \in (1, \infty]$ satisfying
  \begin{align*}
    \frac{1}{p-1} \,+\, \frac{1}{(p-1) q} \,+\, \frac{1}{q}
    \;<\;
    \frac{2}{d}
  \end{align*}
  such that
  \begin{align*}
    \mean\!\big[\om_t(e)^p\big] \;<\; \infty
    \quad \text{and} \quad
    \mean\!\big[\om_t(e)^{-q}\big] \;<\; \infty
  \end{align*}
  for any $e \in E_d$ and $t \in \bbR$, then Assumption~\ref{ass:moment} holds by the ergodic theorem.
\end{remark}
\begin{theorem}\label{thm:main}
  Suppose that $d \geq 2$ and Assumptions \ref{ass:P} and \ref{ass:moment} hold.  Then, the QFCLT holds for $X$ with a deterministic non-degenerate covariance matrix $\Si^2$.
\end{theorem}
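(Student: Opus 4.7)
The plan is to follow the classical corrector-plus-martingale strategy, adapted to the time-inhomogeneous setting. For each $\xi \in \mathbb{R}^d$ one seeks a space-time corrector $\chi_\xi : \Omega \times \mathbb{R} \times \mathbb{Z}^d \to \mathbb{R}$ which is a cocycle under the space-time shifts $\tau_{s,z}$ and such that the ``harmonic coordinate''
\[
\Phi_\xi(\omega, t, x) \;=\; x \cdot \xi \,-\, \chi_\xi(\omega, t, x)
\]
satisfies $(\partial_t + \mathcal{L}_t^\omega)\Phi_\xi = 0$ in a suitable $L^2$ sense. The construction rests on the \emph{environment viewed from the particle}, $\eta_t = \tau_{t, X_t}\omega$, which is a time-homogeneous Markov process on $\Omega$ for which $\mathbb{P}$ is invariant and ergodic, since the counting measure on $\mathbb{Z}^d$ is invariant for the VSRW. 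A Hilbert-space construction on the space of shift-covariant $L^2$-cocycles with stationary gradients (a space-time analogue of the corrector construction used in \cite{ADS15}) produces such a $\chi_\xi$, and Dynkin's formula applied to $\Phi_\xi$ then yields the decomposition
\[
\xi \cdot X_t \;=\; M_t^\xi \,+\, \chi_\xi(\omega, 0, 0) \,-\, \chi_\xi(\omega, t, X_t),
\]
with $M^\xi$ a square-integrable $\mathrm{P}_{0,0}^\omega$-martingale.

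Next, I would treat the martingale part via the Lindeberg--Feller martingale FCLT. The predictable quadratic variation of $M_{n^2\cdot}^\xi/n$ is an additive functional of $\eta$, so the ergodic theorem for the environment process identifies its $\mathbb{P}$-a.s.\ limit as $t\,\xi^T \Sigma^2 \xi$ for a deterministic symmetric matrix $\Sigma^2$, and the Lindeberg condition follows from the integrability of $\mu^\omega$ supplied by Assumption~\ref{ass:moment}. Non-degeneracy of $\Sigma^2$ is obtained from the variational characterisation of the effective diffusivity combined with $\mathbb{E}[\nu_t^\omega(0)] < \infty$, which gives a strictly positive lower bound on the Dirichlet form against linear test fields. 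Hence $M_{n^2\cdot}^\xi/n$ converges to a Brownian motion with covariance $\Sigma^2$, and the QFCLT for $X$ will follow once the corrector is shown to be sublinear along typical trajectories.

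The main obstacle is therefore the \emph{sublinearity} statement
\[
\lim_{n \to \infty} \frac{1}{n} \max_{(t,x) \in Q(n)} \bigl|\chi_\xi(\omega, t, x)\bigr| \;=\; 0 \qquad \mathbb{P}\text{-a.s.}
\]
Sublinearity in a space-time averaged $L^1$-sense is immediate from the cocycle property together with the space-time ergodic theorem \eqref{eq:ergodic:time-space}. Upgrading this averaged statement to a pointwise maximal bound is the heart of the paper: I would carry out a \emph{Moser iteration} for the parabolic equation satisfied by $\Phi_\xi$. Each step couples a discrete parabolic Caccioppoli (energy) estimate, which costs a factor involving $\nu^\omega$, with a space-time Sobolev inequality on $Q(n)$; iterating on a sequence of shrinking space-time cylinders yields an $L^\infty$-bound on $\chi_\xi$ controlled by its space-time $L^1$-average times a multiplicative prefactor built out of averages of $\mu^\omega$ and $\nu^\omega$. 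H\"older's inequality with the exponents $p, p', q, q'$ dictates how these averages enter at each step, and the condition \eqref{eq:condpqprime} is precisely what is required for the iteration to close while keeping the resulting prefactor $\mathbb{P}$-a.s.\ bounded thanks to \eqref{eq:moment_condition}. Combining this maximal inequality with the $L^1$-sublinearity produces the pointwise sublinearity on $Q(n)$, which together with standard heat-kernel tail estimates controlling the probability that $X_{n^2 t}$ leaves $Q(n)$ completes the passage from the martingale CLT to the QFCLT.
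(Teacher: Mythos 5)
Your overall strategy---harmonic coordinates plus a martingale FCLT, with sublinearity of the corrector proved by Moser iteration---matches the paper's plan, but two steps hide genuine difficulties that your proposal glosses over. The corrector construction you describe, a projection onto a space of shift-covariant $L^2$-cocycles ``as in \cite{ADS15},'' does not work here: that projection argument hinges on the symmetry of the generator, whereas the space-time generator $\partial_t + \mathcal{L}_t^\omega$ is asymmetric (the time-shift generator $\mathrm{D}_0$ is skew-adjoint on $L^2(\Omega,\mathbb{P})$). In particular the corrector is \emph{not} a space-time cocycle. The paper instead solves the regularised equation $Q^\beta(\psi^{\beta,k},\cdot)=B^k(\cdot)$ on the Hilbert space $\mathcal{H}^1$ via Lax--Milgram, extracts a weakly convergent subsequence as $\beta\downarrow 0$ from the energy bound \eqref{est:energy} together with the uniform $\mathcal{H}^{-1}$-bound \eqref{est:linear_functional}, and obtains a corrector of the form $\chi(\omega,t,x)=\chi_0(\tau_{t,0}\omega,x)+\int_0^t\bigl(\mathcal{L}_s^\omega\Phi_0(\tau_{s,0}\omega,\cdot)\bigr)(0)\,\mathrm{d}s$, in which only the first summand has a (purely spatial) cocycle property.

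Because of that extra time-integral term, your claim that $L^1$-sublinearity ``is immediate from the cocycle property together with the space-time ergodic theorem'' does not hold. The spatial part $\chi_0$ is indeed handled by the $\ell^1$-Poincar\'e inequality combined with the ergodic theorem (Lemma~\ref{lemma:sublinearity:l1:chi_0}), but the time-integral part requires testing against a smooth compactly supported spatial profile, an extension of Birkhoff's ergodic theorem for the resulting weighted space-time averages, and a doubling argument to remove the space-time mean (Lemma~\ref{lemma:sublinearity:chi:l1:averaged} and Proposition~\ref{prop:sublinearity:l1}). This is one of the genuinely new ingredients forced by the time-dependent setting and needs to be made explicit. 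Relatedly, non-degeneracy of $\Sigma^2$ in the paper follows directly from this $L^1$-sublinearity (if $v\cdot\Sigma^2 v=0$ then $v\cdot\Phi_0\equiv 0$, hence $|v\cdot x|=|v\cdot\chi(\omega,t,x)|$, contradicting \eqref{eq:l1:conv}), not from a variational characterisation of the effective diffusivity, which in this asymmetric degenerate setting would require a sector condition that is not assumed.
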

For the static RCM a QFCLT is proven in \cite{ADS15} for stationary ergodic conductances $\{\om(e), e\in E_d\}$ satisfying $\mean[\om(e)^p]<\infty$ and $\mean [\om(e)^{-q}]<\infty$ for $p,q>1$ such that $1/p+1/q<2/d$. Since in the static case we can choose $p'=q'=\infty$, the moment condition for the static model can be recovered in \eqref{eq:condpqprime}.

In the setting of general ergodic environments it is natural to expect that some moment conditions are needed in view of the results in \cite{BBT16}, where Barlow, Burdzy and Tim\'ar give an example for a static RCM on $\bbZ^2$ for which the QFCLT fails but a weak moment condition is fulfilled.

One motivation to study the dynamic RCM is to consider random walks in an environment generated by some interacting particle systems like zero-range or exclusion processes (cf.\ \cite{DGR15, MO16}). Recently, some  on-diagonal upper bounds  for the transition kernel of  a  degenerate time-dependent conductances model are obtained in \cite{MO16}, where the conductances are uniformly bounded from above but they are allowed to be zero at a a given time satisfying a lower moment condition.  In \cite{HK16} it is shown that for  uniformly elliptic dynamic RCM in discrete time -- in contrast to the time-static case --  two-sided Gaussian heat kernel estimates are not stable under perturbations. In a time dynamic balanced environment a QFCLT under moment conditions has been recently shown in \cite{DGR15}.

An annealed FCLT has been obtained for strictly elliptic conductances in \cite{An14}, for non-elliptic conductances generated by an exclusion process in \cite{Av12} and for a similar one-dimensional model allowing some local drift in \cite{AdSV13} and recently for environments generated by random walks in \cite{HdHdSST15}.  In \cite{BCDG13, Mi16} random walks on the backbone of an oriented percolation cluster are considered, which are interpreted as the ancestral lines in a population model.

Finally, let us remark that there is a link between the time dynamic RCM and Ginzburg-Landau interface models as such random walks appear in the so-called Helffer-Sj\"ostrand representation of the space-time covariance in these models (cf.\ \cite{DD05, An14}). However, in this context the annealed FCLT is relevant.

\subsection{The method}
We follow the most common approach to prove a QFLCT for the RCM and introduce the so-called harmonic coordinates, that is we construct a \emph{corrector} $\chi\!:\Om \times \bbR \times \bbZ^d \to \bbR^d$ such that
\begin{align*}
  \Phi(\om,t, x) \;=\; x - \chi(\om,t, x)
\end{align*}
is a space-time harmonic function. In other words,
\begin{align} \label{eq:harm_coord_intro}
  \partial_t \Phi(\om, t, x) +  \cL_t^\om \Phi(\om,t,x)
  \;=\;
  0.
\end{align}
This can be rephrased by saying that $\chi$ is a solution of the time-inhomogeneous Poisson equation
\begin{align} \label{eq:pois_chi}
  \partial_t u + \cL_t^{\om} u \;=\; \cL_t^{\om} \Pi,
\end{align}
where $\Pi$ denotes the identity mapping on $\bbZ^d$.  Recall that one property of the \emph{static} RCM -- being one its main differences to other models for random walks in random media -- is the reversibility of the random walk w.r.t.\  its speed measure. In our setting, the generator $(\partial_t + \cL^\om_t)$ of the space-time process $(t, X_t)$ is asymmetric and the construction of the corrector as carried out for instance in \cite{ABDH12,Bi11} fails, since it is based on a simple projection argument using the symmetry of the generator and an integration by parts. In \cite{An14} it was possible to construct the corrector by techniques close to the original method by Kipnis and Varadhan, since in the case of strictly elliptic conductances the asymmetric part can be controlled and a sector condition holds. In our degenerate situation, the construction of the corrector is indeed one of the most challenging parts to prove the QFCLT. Following the approach in \cite{FK99}, we first solve a regularised corrector equation by an application of the Lax-Milgram lemma and then we obtain the harmonic coordinates by taking limits in a suitable distribution space. The resulting corrector function consists of two parts, one part $\chi_0$ being time-homogeneous and invariant w.r.t.\ space shifts in the sense that for every fixed $t$ it satisfies $\bbP$-a.s.\ the \emph{cocycle property} (see Definition~\ref{def:cocycle} below) and a second part which is only depending on the time variable and which therefore does not appear in the corrector for the time-static model.

Given the harmonic coordinates as a solution of \eqref{eq:harm_coord_intro} the process
\begin{align*}
  M_t \;=\; X_t - \chi(\om,t, X_t)
\end{align*}
is a martingale under $\Prob_{\!0,0}^{\om}$ for $\prob$-a.e.\ $\om$, and a QFCLT for the martingale part $M$ can be easily shown by standard arguments.  We thus get a QFCLT for $X$ once we verify that $\prob$-almost surely the corrector is sublinear:
\begin{align} \label{eq:sublin_intro}
  \lim_{n \to \infty}  \max_{(t,x)\in Q(n)} \frac{ \left| \chi(\om,t,x) \right|}{n}
  \;=\;
  0.
\end{align}
This control on the corrector implies that for any $T>0$ and $\prob$-a.e\ $\om$,
\begin{align*}
  \sup_{0 \,\leq\, t \,\leq\, T}\,
    \frac{1}{n}\, \Big| \chi\big(\om,n^2 t, n\, X_{t}^{(n)}\big) \Big|
    \;\underset{n \to \infty}{\longrightarrow}\;
    0
    \quad \text{ in $\Prob_{\!0,0}^\om$-probability}
\end{align*}
(see Proposition~\ref{prop:contr_corr} below).  Combined with the QFCLT for the martingale part this gives Theorem~\ref{thm:main}.

Once the corrector is constructed, the remaining difficulty in the proof of the QFCLT is to prove \eqref{eq:sublin_intro}.  In a first step we show that the rescaled corrector converges in the space-time averaged $\Norm{\cdot}{1,1, Q(n)}$-norm to zero (see Proposition~\ref{prop:sublinearity:l1} below). This is based on some input from ergodic theory,  see Section~\ref{sec:corr_sublin} for more details. In a second step we establish a maximal inequality for the corrector as a solution of \eqref{eq:pois_chi} using Moser iteration, that is we show that the maximum of the rescaled corrector in \eqref{eq:sublin_intro} can be controlled by its $\Norm{\cdot}{1,1, Q(n)}$-norm (see Proposition~\ref{prop:maxin_lattice} below). In the case of static conductances Moser iteration has already been implemented in order to show the QFCLT in \cite{ADS15}, but also to obtain a local limit theorem and elliptic and parabolic Harnack inequalities in \cite{ADS16} as well as upper Gaussian estimates on the heat kernel in \cite{ADS16a}. In the present time-inhomogeneous setting involving a time-dependent operator $\cL^\om_t$  a space-time version of the Sobolev inequality in \cite{ADS15} is needed and the actual iteration procedure has to be carried out in both the space and the time parameter of the space-time averaged norm (cf.\ \cite{KK77}).

The paper is organised as follows:  In Section~\ref{sec:harm_coord} we construct the corrector and show some of its properties. Then, in Section~\ref{sec:corr_sublin} we prove the sublinearity of the corrector \eqref{eq:sublin_intro} and complete the proof of the QFCLT in Section~\ref{sec:QFCLT}.  The maximal inequality for the time-inhomogeneous Poisson equation in \eqref{eq:pois_chi} is proven in a more general context in Section~\ref{sec:mos_it}.

Throughout the paper, we write $c$ to denote a positive constant which may change on each appearance.  Constants denoted by $C_i$ will be the same through each argument.

\section{Harmonic embedding and the corrector} \label{sec:harm_coord}
Throughout this section we suppose that Assumption~\ref{ass:P} holds.
\subsection{Setup and Preliminaries}
Let us denote by $\cN \ldef \{x \in \bbZ^d : |x|=1\}$ the set of all neighbours of the origin in $\bbZ^d$.  Further, we endow the space $\Om \times \cN$ with the measure $m$ defined by
\begin{align}
  m(\md \om, \md z)
  \;\ldef\;
  \sum_{x \in \bbZ^d} \om_0(0, x) \prob(\md \om) \otimes \de_x(z).
\end{align}
It is easy to check that $L^2(\Om \times \cN, m)$ is a Hilbert space.  For functions $\phi\!: \Om \to \bbR$ we define the horizontal gradient $\mD \phi\!: \Om \times \bbZ^d \to \bbR$ as $\mD \phi(\om, x) \ldef \phi(\tau_{0,x}\om) - \phi(\om)$.  We will also write $\mD_x \phi(\om)$ for $\mD \phi(\om, x)$ with $x \in \cN$.  Notice that $\mD \phi \in L^2(\Om \times \cN, m)$ for any $ \phi \in L^2(\Om, \prob)$.  Further, we define
\begin{align*}
  L^2_{\mathrm{pot}}
  \;\ldef\;
  \overline{
    \{
      \mD \phi \,:\,
      \phi :\Om \to \bbR\; \text{bounded}
    \}
  }^{\|\cdot\|_{L^2(\Om \times \cN, m)}}
\end{align*}
to be the closure of the set of gradients in $L^2(\Om \times \cN, m)$ and let $L^2_{\mathrm{sol}}$ be its orthogonal complement in $L^2(\Om \times \cN, m)$, i.e.\
\begin{align*}
  L^2(\Om \times \cN, m)
  \;=\;
  L^2_{\mathrm{pot}} \oplus L^2_{\mathrm{sol}}.
\end{align*}
\begin{lemma}[cycle condition]\label{lemma:cycle}
  For any $\psi \in L^2_{\mathrm{pot}}$ and any sequence $(x_0, \ldots, x_k)$ in $\bbZ^d$ with $x_0 = x_k$ and $x_i - x_{i-1} \in \cN$ for all $i$, then $\sum_{i=1}^k \psi(\tau_{0,x_{i-1}\,}\om, x_i-x_{i-1}) = 0$.
\end{lemma}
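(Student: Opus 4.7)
My plan is to first verify the identity on the dense subspace of gradients of bounded functions, where it holds pointwise by telescoping, and then to extend it to the closure by an $L^2$-approximation argument that uses the stationarity of $\prob$.

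Concretely, suppose first that $\psi = \mD \phi$ for some bounded $\phi\!: \Om \to \bbR$. Using the group property $\tau_{0,z}\circ\tau_{0,x_{i-1}} = \tau_{0,x_{i-1}+z}$, for $z = x_i - x_{i-1}$ we compute
\begin{align*}
  \mD\phi\big(\tau_{0,x_{i-1}}\om,\,x_i-x_{i-1}\big)
  \;=\;
  \phi\big(\tau_{0,x_i}\om\big) - \phi\big(\tau_{0,x_{i-1}}\om\big),
\end{align*}
and summing over $i=1,\ldots,k$ yields a telescoping cancellation that equals $\phi(\tau_{0,x_k}\om)-\phi(\tau_{0,x_0}\om)=0$, since $x_k=x_0$. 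Crucially this identity holds pointwise in $\om$, not merely almost surely.

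For a general $\psi \in L^2_{\mathrm{pot}}$, pick bounded functions $\phi_n$ such that $\mD\phi_n \to \psi$ in $L^2(\Om\times\cN, m)$. Passing to a subsequence (still denoted $\phi_n$) we may assume $m$-a.e.\ convergence. Since $\cN$ is finite and $\om_0(0,z)>0$ $\prob$-a.s.\ for each $z\in\cN$ by Assumption~\ref{ass:P}(i), this implies that there exists a $\prob$-null set $N_0$ such that $\mD\phi_n(\om,z) \to \psi(\om,z)$ for every $\om\notin N_0$ and every $z\in\cN$. Because $\prob$ is invariant under each space shift $\tau_{0,x_{i-1}}$, the set $N_i \ldef \tau_{0,x_{i-1}}^{-1}(N_0)$ is also $\prob$-null, and thus $N \ldef \bigcup_{i=1}^{k} N_i$ is a $\prob$-null set on whose complement
\begin{align*}
  \mD\phi_n\big(\tau_{0,x_{i-1}}\om,\,x_i-x_{i-1}\big)
  \;\longrightarrow\;
  \psi\big(\tau_{0,x_{i-1}}\om,\,x_i-x_{i-1}\big)
  \qquad\text{for every } i=1,\ldots,k.
\end{align*}
Summing over $i$ and passing to the limit, the already-established cycle identity for $\mD\phi_n$ yields $\sum_{i=1}^{k}\psi(\tau_{0,x_{i-1}}\om,\,x_i-x_{i-1})=0$ for every $\om \notin N$, which is the claim.

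The only delicate step is the passage to the limit along the path, since the cycle sum is evaluated at $k+1$ shifted points. The argument above handles this via stationarity of $\prob$ together with the finiteness of $k$; I expect no further obstacle. The statement in the lemma is to be understood $\prob$-a.s.\ for each fixed path $(x_0,\ldots,x_k)$, which is consistent with $\psi$ being an $L^2$-equivalence class.
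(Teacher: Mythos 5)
Your proof is correct and is, in essence, the elaboration the paper compresses into the single sentence ``Follows directly from the definition of $L^2_{\mathrm{pot}}$'': verify the identity by telescoping for gradients $\mD\phi$ of bounded $\phi$, then pass to the $L^2(\Om\times\cN,m)$-closure via an a.e.\ convergent subsequence, using that $m$ restricted to each slice $\Om\times\{z\}$ is equivalent to $\prob$ (since $\om_0(0,z)>0$ $\prob$-a.s.), that $\cN$ is finite, and that $\prob$ is invariant under the finitely many space shifts along the path. The details you add — especially the remark that the a.e.\ identity must be transported along the path by shift-invariance before taking the limit — are exactly the ones the paper omits, and they are all sound.
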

\begin{proof}
  Follows directly from the definition of $L^2_{\mathrm{pot}}$.
\end{proof}
For any $\psi \in L^2_{\mathrm{pot}}$ we define its extension $\Psi\!: \Om \times \bbZ^d \to \bbR$ in the following way.  For any $0 \ne x \in \bbZ^d$ choose a sequence $(x_0, \ldots, x_k)$ in $\bbZ^d$ in such a way that $x_0 = 0$, $x_k = x$ and $x_i - x_{i-1} \in \cN$ for all $i$ and set
\begin{align}\label{eq:def:extension}
  \Psi(\om, 0)
  \;\ldef\;
  0,
  \qquad \text{and} \qquad
  \Psi(\om, x)
  \;=\;
  \sum_{i=1}^k \psi(\tau_{0,x_{i-1}\,}\om, x_{i}- x_i).
\end{align}
As a consequence of Lemma~\ref{lemma:cycle}, $\Psi$ does not depend on the choice of paths.
\begin{definition} \label{def:cocycle}
  A measurable function $\Psi\!: \Om \times \bbZ^d \to \bbR$, also called random field, satisfies the \emph{cocycle property} (in space), if for $\prob$-a.e. $\om$,
  \begin{align}
    \Psi(\tau_{0,x} \om, y-x )
    \;=\;
    \Psi(\om, y) \,-\, \Psi(\om,x),
    \qquad \forall\, x, y \in \bbZ^d.
  \end{align}
  We denote by $L^2_{\mathrm{cov}}$ the set of function $\Psi\!: \Om \times \bbZ^d \to \bbR$ which satisfies the cocycle property such that
  \begin{align*}
    \Norm{\Psi}{L_\mathrm{cov}^2}^2
    \;\ldef\;
    \mean\!\Big[
      {\textstyle \sum_{x \in \bbZ^d}}\, \om_0(0, x)\, \Psi(\om,x)^2
    \Big]
    \;<\;
    \infty.
  \end{align*}
\end{definition}
Although $|| \cdot ||_{L^2_\mathrm{cov}}$ coincides with the norm on $L^2(\Om \times \cN, m)$, we nevertheless introduce this notation to stress the fact that we apply it to functions $\Psi\!:\Om \times \bbZ^d \to \bbR$ that satisfies in addition the cocycle property.
\begin{lemma}\label{lemma:L2cov}
  Let $\Psi \in L^2_{\mathrm{cov}}$.  Then
  \begin{enumerate}[(i)]
  \item $\Psi(\om, 0) = 0$ and $\Psi(\tau_{0,x} \om, -x) = -\Psi(\om, x)$ for all $x \in \bbZ^d$.
  \item $\|\Psi\|_{L^2_{\mathrm{cov}}} = 0$, if and only if, $\Psi(\om, x) = 0$ $\prob$-a.s.\ for all $x \in \bbZ^d$.
  \end{enumerate}
\end{lemma}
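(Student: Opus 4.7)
The plan is to derive part~(i) directly from the cocycle property, and then use it together with Assumption~\ref{ass:P}(i) and stationarity of $\prob$ to obtain part~(ii).

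For part~(i), I would first set $x=y=0$ in the cocycle identity $\Psi(\tau_{0,x}\om, y-x) = \Psi(\om,y) - \Psi(\om,x)$ to get $\Psi(\om,0) = 0$ for $\prob$-a.e.\ $\om$. Then, for arbitrary $x \in \bbZ^d$, setting $y = 0$ in the cocycle identity yields
\begin{equation*}
  \Psi(\tau_{0,x}\om, -x) \;=\; \Psi(\om, 0) - \Psi(\om, x) \;=\; -\Psi(\om, x),
\end{equation*}
which is the required antisymmetry relation.

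For part~(ii), the nontrivial direction is that $\|\Psi\|_{L^2_{\mathrm{cov}}} = 0$ implies $\Psi(\om,x) = 0$ $\prob$-a.s.\ for every $x \in \bbZ^d$. From the definition of the norm, vanishing of $\|\Psi\|_{L^2_{\mathrm{cov}}}$ gives $\om_0(0,x)\,\Psi(\om,x)^2 = 0$ for $\prob$-a.e.\ $\om$ and every $x \in \cN$. By Assumption~\ref{ass:P}(i) we have $\om_0(0,x) > 0$ $\prob$-a.s., so $\Psi(\om, x) = 0$ $\prob$-a.s.\ for every $x \in \cN$. To extend this to all of $\bbZ^d$, I would fix $x \in \bbZ^d$ and choose a nearest-neighbour path $0 = x_0, x_1, \ldots, x_k = x$. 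Applying the cocycle property along the path gives the telescoping identity
\begin{equation*}
  \Psi(\om, x) \;=\; \sum_{i=1}^{k} \bigl( \Psi(\om, x_i) - \Psi(\om, x_{i-1}) \bigr) \;=\; \sum_{i=1}^{k} \Psi\bigl(\tau_{0,x_{i-1}}\om,\, x_i - x_{i-1}\bigr).
\end{equation*}
Since $x_i - x_{i-1} \in \cN$ and since $\prob$ is invariant under the shift $\tau_{0,x_{i-1}}$ by Assumption~\ref{ass:P}(ii), each summand vanishes $\prob$-a.s., so $\Psi(\om, x) = 0$ $\prob$-a.s.

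The converse is immediate from the definition of the norm. I do not anticipate any real obstacle here; the only point that requires attention is invoking stationarity under $\tau_{0,x_{i-1}}$ so that the $\prob$-a.s.\ statement for neighbours of the origin transfers to the shifted environment, yielding the vanishing along the whole path.
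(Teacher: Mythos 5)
Your proof is correct and follows the same approach the paper indicates: part (i) is read off from the cocycle identity, and part (ii) combines positivity of the conductances with stationarity of $\prob$ under space shifts via the telescoping cocycle decomposition along a path. You have simply spelled out the details that the paper compresses into "follows immediately" and "is obvious."
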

\begin{proof}
  (i) follows immediately from the cocycle property.  (ii) is obvious due to the stationarity of $\prob$ and the fact that $\om_0(e) > 0$ $\prob$-a.s.\ for any $e \in E_d$.
\end{proof}
Recall that, by Remark \ref{rem:time_shifts}, the group $\{T_t\}_{t\in \bbR}$ is a SCCG on $L^2(\Om, \prob)$, therefore it has an infinitesimal generator $\mD_0$, whose domain $\cD(\mD_0)$ is dense in $L^2(\Om, \prob)$,
\begin{align*}
  \mD_0 \phi \;\ldef\; \lim_{h \to 0} \frac{T_h \phi - \phi}{h},
\end{align*}
whenever the limit exists in $L^2(\Om, \prob)$.  Finally, we denote by $\langle \cdot, \cdot\rangle_{L^2(\Om \times \cN, m)}$ and $\langle \cdot, \cdot\rangle_{L^2(\Om,\prob)}$ the scalar product in $L^2(\Om \times \cN, m)$ and $L^2(\Om, \prob)$, respectively.
\begin{lemma} \label{lem:prop_coll}
  \begin{enumerate}[(i)]
  \item The operator $\mD_0$ is antisymmetric in $L^2(\Om, \prob)$, that is
    \begin{align} \label{eq:D0_antisymm}
      \langle \phi, \mD_0\psi\rangle_{L^2(\Om,\prob)}
      \;=\;
      -\langle \mD_0\phi, \psi \rangle_{L^2(\Om,\prob)},
      \qquad \forall\, \phi,\psi \in \cD(\mD_0).
    \end{align}
    In particular $\langle \phi, D_0\phi\rangle_{L^2(\Om,\prob)} = 0$ and $\langle 1, D_0\phi\rangle_{L^2(\Om, \prob)} = 0$.
  \item For every $x \in \bbZ^d$ the operators $D_x$ and $D_0$ commute, that is
    \begin{align} \label{eq:comm_grad}
      \mD_0 \mD_x \phi
      \;=\;
      \mD_x \mD_0 \phi,
      \qquad \forall\, \phi \in \cD(\mD_0).
    \end{align}
  \item For every $x \in \bbZ^d$ the adjoint of the operator $\mD_{x}$ is given by $\mD_{-x}$,
    \begin{align} \label{eq:adj_Dx}
      \langle \phi, \mD_x \psi \rangle_{L^2(\Om,\prob)}
      \;=\;
      \langle \mD_{-x} \phi, \psi \rangle_{L^2(\Om,\prob)},
      \qquad  \forall\, \phi, \psi \in L^2(\Om, \prob).
    \end{align}
  \item For every $\xi \in L^2(\Om, \prob)$ the function $t \mapsto \xi(\tau_{t,0\,} \om)$ belongs to $L^2_{\mathrm{loc}}(\bbR)$  $\prob$-a.e.\ $\om$.
  \item For any $\ze \in C^1(\bbR)$ with compact support, $\phi \in \cD(\mD_0)$ and $\psi \in L^2(\Om, \prob)$,
    \begin{align} \label{eq:ibpf_D0}
      \int_{\bbR}
        \ze(t)\,
        \langle \mD_0 \phi \circ \tau_{-t,0}, \psi \rangle_{L^2(\Om,\prob)}\,
      \md t
      \;=\;
      \int_{\bbR}
        \ze'(t)\, \langle \phi \circ \tau_{-t,0}, \psi \rangle_{L^2(\Om,\prob)}\,
      \md t.
    \end{align}
  \item For any  $\phi \in \cD(\mD_0)$, the function $t \mapsto \phi(\tau_{t,0\,} \om)$ is weakly differentiable $\prob$-almost surely.  In particular
    \begin{align}\label{eq:weakdiff}
      \mD_0 \phi (\tau_{t,0}\om) \;=\; \phi' (\tau_{\cdot,0}\om)(t)
    \end{align}
    for almost all $t$, $\prob$-almost surely.
  \item For every $\xi \in L^2(\Om, \prob)$ and every $\psi \in L^2_{\mathrm{pot}}$,
    \begin{align} \label{eq:ibpf_Dx}
      \langle \psi, \mD \xi \rangle_{L^2(\Om \times \cN, m)}
      \;=\;
      -2\,
      \mean\!\Big[ \xi(\om)\;
        {\textstyle \sum_{x \sim 0\,}} \om_0(0,x)\, \psi(\om,x)
      \Big].
    \end{align}
  \end{enumerate}
\end{lemma}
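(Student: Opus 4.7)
Parts (i)--(iv) are routine consequences of stationarity. For (i), the stationarity $\prob \circ \tau_{t,0}^{-1} = \prob$ makes each $T_t$ an isometry on $L^2(\Om, \prob)$; combined with the group property this yields a unitary group, whose generator $\mD_0$ is skew-adjoint, giving \eqref{eq:D0_antisymm}, and the two special cases then follow by specialising $\psi = \phi$ and $\psi \equiv 1$. For (ii), space and time shifts commute, $\tau_{0,x}\circ\tau_{h,0} = \tau_{h,0}\circ\tau_{0,x}$, so $\mD_x$ commutes with every $T_h$ as a bounded operator; dividing by $h$ and letting $h \to 0$ yields \eqref{eq:comm_grad}. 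For (iii), the change of variables $\om \mapsto \tau_{0,-x}\om$ in $\mean[\phi(\om)\psi(\tau_{0,x}\om)]$ produces $\mean[\phi(\tau_{0,-x}\om)\psi(\om)]$, and \eqref{eq:adj_Dx} follows by subtracting the $\mean[\phi\psi]$ term from both sides. For (iv), Fubini gives $\mean[\int_{-T}^T \xi(\tau_{t,0}\om)^2\,\md t] = 2T\,\mean[\xi^2] < \infty$ for every $T > 0$, so the inner integral is finite $\prob$-a.s.

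For (v), since $\phi \in \cD(\mD_0)$ the $L^2(\Om,\prob)$-valued map $t \mapsto T_{-t}\phi = \phi \circ \tau_{-t,0}$ is differentiable with derivative $-(\mD_0\phi)\circ\tau_{-t,0}$ (because $\mD_0$ commutes with the group, which is standard SCCG-theory). Pairing with $\psi$ gives a scalar $C^1$ function of $t$ whose derivative is integrated against $\ze \in C_c^1(\bbR)$; an ordinary integration by parts then produces \eqref{eq:ibpf_D0}. Part (vi) upgrades this $L^2$-identity to a $\prob$-a.s.\ pointwise statement. By (iv) the scalar integrals $\int \ze(t)(\mD_0\phi)(\tau_{-t,0}\om)\,\md t$ and $\int \ze'(t)\phi(\tau_{-t,0}\om)\,\md t$ are well-defined for $\prob$-a.e.\ $\om$, and Fubini recasts \eqref{eq:ibpf_D0} as
\begin{align*}
  \Big\langle
    \int_{\bbR} \ze(t)\,(\mD_0\phi)\circ\tau_{-t,0}\,\md t
    \,-\,
    \int_{\bbR} \ze'(t)\,\phi\circ\tau_{-t,0}\,\md t,\;
    \psi
  \Big\rangle_{\!L^2(\Om,\prob)}
  \;=\; 0
  \qquad \forall\, \psi \in L^2(\Om,\prob).
\end{align*}
Hence the argument in the bracket vanishes $\prob$-a.s.\ for each fixed $\ze$. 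Choosing a countable family $\{\ze_n\} \subset C_c^1(\bbR)$ dense in the $C^1$-topology on compacts, the union of the corresponding null sets is still negligible, and dominated convergence (again via (iv)) extends the identity to every $\ze \in C_c^1(\bbR)$ simultaneously; this is \eqref{eq:weakdiff} after the substitution $t \mapsto -t$.

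For (vii), I first establish the identity on the dense subspace $\{\mD\phi : \phi \in L^\infty(\Om)\} \subset L^2_{\mathrm{pot}}$. Applying the change of variables $\om \mapsto \tau_{0,-x}\om$ to $\mean[\om_0(0,x)\psi(\om,x)\xi(\tau_{0,x}\om)]$, together with the symmetry $\om_0(-x,0) = \om_0(0,-x)$ and the cocycle identity $\psi(\tau_{0,-x}\om, x) = -\psi(\om, -x)$ (which holds for gradient fields of bounded functions and is inherited by their $L^2$-closure), converts the inner shift in $\xi$ into a reflection in the $\cN$-argument of $\psi$. Relabelling the sum over $x \in \cN$ and combining with the unshifted contribution yields \eqref{eq:ibpf_Dx}. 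Both sides are continuous in $\psi$ with respect to the norm on $L^2(\Om \times \cN, m)$ by Cauchy--Schwarz, so the identity extends to all of $L^2_{\mathrm{pot}}$ by density.

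The main obstacle is (vi): passing from the $L^2$-valued integration by parts in (v) to the pointwise-in-$\om$ weak-derivative statement \eqref{eq:weakdiff}. The separability argument is standard in principle, but it hinges on (iv) both for a well-defined scalar interpretation of the time integrals and for the dominated-convergence step that extends the identity from a countable dense family of test functions to all of $C_c^1(\bbR)$ on a single set of full $\prob$-measure.
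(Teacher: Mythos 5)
Your proposal is correct and follows essentially the same route as the paper's proof: shift-invariance and the group property for (i)--(iii) and (v), Fubini for (iv), the Fubini-plus-separability argument for upgrading (v) to the $\prob$-a.s.\ weak-differentiability statement (vi), and the change of variables combined with the cocycle/cycle identity $\psi(\tau_{0,x}\om,-x)=-\psi(\om,x)$ for (vii). The only cosmetic differences are that you phrase (i) and (v) in the abstract SCCG language (skew-adjoint generator, $C^1$-valued orbit) where the paper writes out the limit of difference quotients directly, and in (vii) you pass through the dense subspace of bounded gradients before extending by density, whereas the paper invokes the cycle condition (Lemma~\ref{lemma:cycle}) for $\psi\in L^2_{\mathrm{pot}}$ directly; both are equivalent.
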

\begin{proof}
  (i) By the shift-invariance of $\prob$ we have for any $\phi, \psi \in L^2(\Om, \prob)$
  \begin{align*}
    \langle \phi, \mD_0\psi \rangle_{L^2(\Om, \prob)}
    &\;=\;
    \lim_{t \to 0}  t^{-1} \langle \phi, T_t \psi - \psi \rangle_{L^2(\Om,\prob)}
    \\[.5ex]
    &\;=\;
    -\lim_{t\to 0} t^{-1} \langle T_{-t} \phi - \phi, \psi \rangle_{L^2(\Om, \prob)}
    \;=\;
    -\langle \psi,  \mD_0\phi \rangle_{L^2(\Om,\prob)}.
  \end{align*}
  The second statement is trivial.
  \\[.75ex]
  (ii) This follows directly from the linearity of $\mD_0$ as
  \begin{align*}
    \mD_0 \mD_x \phi(\om)
    \;=\;
    \mD_0 \big(\phi(\tau_{0,x}\om)-\phi(\om)\big)
    \;=\;
    \mD_0 \phi(\tau_{0,x} \om) - \mD_0 \phi(\om)
    \;=\;
    \mD_x \mD_0\phi(\om),
  \end{align*}
  where we also used that $\phi \circ \tau_{0,x} \in \cD(\mD_0)$ and $\mD_0(\phi \circ \tau_{0,x})= \mD_0(\phi) \circ \tau_{0,x}$.
 \\[.75ex]
  (iii)  Again by the shift invariance of $\prob$ we have
  \begin{align*}
    \langle \phi, \mD_x \psi \rangle_{L^2(\Om,\prob)}
    &\;=\;
    \langle \phi, \psi \circ \tau_{0,x} - \psi \rangle_{L^2(\Om,\prob)}
    \\[.5ex]
    &\;=\;
    \langle \phi \circ \tau_{0,-x} - \phi, \psi \rangle_{L^2(\Om,\prob)}
    \;=\;
    \langle \mD_{-x} \phi, \psi \rangle_{L^2(\Om,\prob)}.
  \end{align*}
  (iv) For any compact $I \subset \bbR$ and $\xi \in L^2(\Om, \prob)$
  \begin{align*}
    \mean\!\bigg[ \int_I( \xi \circ \tau_{t,0})^2\, \md t\bigg]
    \;=\;
    \int_I \mean\!\big[(\xi \circ \tau_{t,0})^2\big]\, \md t
    \;=\;
    |I|\, \mean\!\big[\xi^2\big]
    \;<\;
    \infty.
  \end{align*}
  Thus, for $\prob$-a.e.\ $\om$,
  \begin{align*}
    \int_I \xi( \tau_{t,0}\, \om)^2\, \md t
    \;<\;
    \infty.
  \end{align*}
  (v) A simple change of variables gives
  \begin{align*}
    &\int_{\bbR}
      \ze(t)\, \langle \mD_0 \phi \circ \tau_{-t,0}, \psi \rangle_{L^2(\Om,\prob)}\,
    \md t
    \\[.5ex]
    &\mspace{36mu}=\;
    \lim_{h \to 0} \frac{1}{h}
    \bigg(
      \int_{\bbR}\!
        \ze(t)\, \langle \phi \circ \tau_{-t+h,0}, \psi \rangle_{L^2(\Om,\prob)}\,
      \md t
      \,-\,
      \int_{\bbR}\!
        \ze(t)\, \langle \phi \circ \tau_{-t,0}, \psi \rangle_{L^2(\Om,\prob)}\,
      \md t
    \bigg)
    \\[.5ex]
    &\mspace{36mu}=\;
    \lim_{h \to 0} \frac{1}{h}
    \bigg(
      \int_{\bbR}\!
        \ze(s+h)\, \langle \phi \circ \tau_{-s,0}, \psi \rangle_{L^2(\Om,\prob)}\,
      \md s
      \,-\,
      \int_{\bbR}\!
        \ze(s)\, \langle \phi \circ \tau_{-s,0}, \psi \rangle_{L^2(\Om,\prob)}\,
      \md s
    \bigg)
    \\[.5ex]
    &\mspace{36mu}=\;
    \int_{\bbR}
      \ze'(s)\, \langle \phi \circ \tau_{-s,0}, \psi \rangle_{L^2(\Om,\prob)}\,
    \md s.
  \end{align*}
  (vi) It follows by (iv) that $t \mapsto \phi(\tau_{t,0} \om)$ and $t \mapsto \mD_0\phi(\tau_{t,0}\om)$ belong to $L^2_{\mathrm{loc}}(\bbR)$ $\prob$-almost surely. By definition of weak differentiability, it suffices to show that for $\prob$-a.e.\ $\om$ and all $\ze \in C_0^\infty(\bbR)$
  \begin{align} \label{eq:auxiliary}
    \int_{\bbR}
      \ze(t)\, \mD_0 \phi \circ \tau_{t,0}\,
    \md t
    \;=\;
    -\int_{\bbR} \ze'(t)\phi \circ \tau_{t,0} \md t.
  \end{align}
  By Fubini's theorem and the fact that (v) holds for all $\psi\in L^2(\Om, \prob)$, \eqref{eq:auxiliary} follows for any fixed $\ze$ $\prob$-a.s. The null-set where \eqref{eq:auxiliary} does not hold may depend on $\ze$. We can remove this ambiguity using that $C_0^\infty(\bbR)$ is separable.
  \\[.75ex]
  (vii) By the shift invariance of $\prob$ we have for any $\psi \in L^2_{\mathrm{pot}}$
  \begin{align*}
    &\langle \psi, \mD \xi \rangle_{L^2(\Om \times \cN, m)}
    \\[.5ex]
    &\mspace{36mu}=\;
    \sum_{x \in \bbZ^d}
    \Big(
      \mean\big[ \om_0 (-x,0)\, \psi(\tau_{0,-x} \om, x)\, \xi(\om) \big]
      \,-\,
      \mean\big[ \om_0 (0,x)\, \psi(\om, x)\, \xi(\om) \big]
    \Big)
    \\
    &\mspace{36mu}=\;
    \mean\Big[
      {\textstyle \sum_{x \in \bbZ^d\,}} \om_0(0,x)\,
      \big( \psi(\tau_{0,x}\om, -x) \,-\, \psi(\om, x) \big)\, \xi(\om)
    \Big].
  \end{align*}
  Since Lemma~\ref{lemma:cycle} implies that $\psi(\tau_{0,x}\om, -x) = -\psi(\om, x)$ for all $x \in \cN$, the assertion follows.
\end{proof}

\subsection{Construction of the corrector}
In this subsection we construct the corrector.  We introduce the position field $\Pi\!: \Om \times \bbZ^d \to \bbR^d$ with $\Pi(\om, x) = x$.  We  write $\Pi^j$ for the $j$-th coordinate of $\Pi$.   Obviously, $\Pi^j$ satisfies the cocycle property since $\Pi^j(\om, y-x) = \Pi^j(\om, y) - \Pi^j(\om, x)$.  Moreover, for every $j \in \{1, \ldots, d\}$,
\begin{align*}
  \|\Pi^j\|_{L^2_{\mathrm{cov}}}^2
  \;=\;
  \mean\!\Big[{\textstyle \sum_{x \in \bbZ^d\,}} \om_0(0, x)\, |x^j|^2\Big]
  \;\leq\;
   \mean[\mu_0^{\om}(0)]
  \;<\;
  \infty.
\end{align*}
Next, we state the main result of this subsection.
\begin{theorem}\label{thm:harm_coord}
  Suppose that Assumption~\ref{ass:moment} holds.  Then, there exists a function $\Phi_0 = (\Phi_0^1, \ldots, \Phi_0^d)\!: \Om \times \bbZ^d \to \bbR^d$ with $\Phi_0^j \in L_{\mathrm{cov}}^2$ for $j \in \{1, \ldots, d\}$ such that the following hold.
  \begin{enumerate}[(i)]
  \item  For all $j \in \{1, \ldots, d\}$,
    \begin{align}
      \chi_0^j\!: \Om \times \bbZ^d \to \bbR,
      \qquad
      \chi_0^j \ldef \Pi^j - \Phi_0^j
    \end{align}
    is the unique extension of a function in $L^2_{\mathrm{pot}}$.
  \item  The function $\Phi\!: \Om \times \bbR \times \bbZ^d \to \bbR^d$,
    \begin{align}\label{eq:harm_coord2}
      \Phi(\om, t, x)
      \;=\;
      \Phi_0(\tau_{t,0\,} \om, x)
      \,-\,
      \int_0^t \big(\cL_s^{\om} \Phi_0(\tau_{s,0\,}\om, \cdot)\big)(0)\, \md s
    \end{align}
     also called harmonic coordinate, is (time-space) harmonic in the sense that $\Phi$ is differentiable for almost every $t \in \bbR$ and
    \begin{align} \label{eq:harm_coord}
      \partial_t \Phi(\om,t,x) \,+\, \cL^\om_t \Phi(\om,t,x) \;=\; 0,
      \qquad
      \Phi(\om, 0, 0) \;=\; 0.
    \end{align}
  \item The harmonic coordinates $\Phi$ have the asymptotics
    \begin{align*}
      \lim_{n \to \infty} \max_{(t,x) \in Q(n)}
      \frac{1}{n} \big|\Phi(\om, t, x) \,-\, x \big|
      \;=\;
      0.
    \end{align*}
  \end{enumerate}
\end{theorem}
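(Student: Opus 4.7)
Following the Fannjiang--Komorowski scheme announced in the introduction, the plan is to build the time-homogeneous part $\Phi_0$ of the corrector by solving a $\lambda$-regularised corrector equation via the Lax--Milgram lemma on a suitable subspace of $L^2_\mathrm{pot}$, and then passing to the limit $\lambda \downarrow 0$; the harmonic coordinate $\Phi$ will afterwards be assembled through the time integral in \eqref{eq:harm_coord2}. Part (iii), the sublinearity of $\Phi - \Pi$, will be obtained in two further steps that combine the space-time ergodic theorem with a maximal inequality established later by Moser iteration.

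\textbf{Construction of $\Phi_0$ (part (i)).} For each $j \in \{1,\ldots,d\}$ and $\lambda > 0$ I would introduce on $L^2_\mathrm{pot}$ a coercive bilinear form incorporating the $\lambda$-regularisation, the symmetric jump term coming from $\om_0$, and the antisymmetric contribution coming from $\mD_0$ acting on the extensions in the sense of \eqref{eq:def:extension}. Continuity is immediate from Assumption~\ref{ass:P}(i), while coercivity rests on the antisymmetry of $\mD_0$ (Lemma~\ref{lem:prop_coll}(i)). Lax--Milgram then yields a unique solution $\phi_\lambda^j \in L^2_\mathrm{pot}$; testing against $\phi_\lambda^j$ itself and using $\|\Pi^j\|_{L^2_\mathrm{cov}}^2 \leq \mean[\mu_0^\om(0)] < \infty$ provides a $\lambda$-uniform bound in $L^2(\Om \times \cN, m)$. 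Extracting a weak subsequential limit $\phi^j \in L^2_\mathrm{pot}$ and defining $\chi_0^j$ as its extension through \eqref{eq:def:extension} (well-defined thanks to Lemma~\ref{lemma:cycle}) gives $\Phi_0^j = \Pi^j - \chi_0^j \in L^2_\mathrm{cov}$, proving (i).

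\textbf{Harmonicity of $\Phi$ (part (ii)).} Passing to the limit in the regularised equation delivers, $\prob$-a.s.\ and for every $x \in \bbZ^d$, the pointwise identity
\begin{equation*}
  \mD_0 \Phi_0(\om, x) \,+\, \bigl(\cL_0^\om \Phi_0(\om, \cdot)\bigr)(x) \,-\, \bigl(\cL_0^\om \Phi_0(\om, \cdot)\bigr)(0) \;=\; 0;
\end{equation*}
the integral in \eqref{eq:harm_coord2} is meaningful in $t$ by Lemma~\ref{lem:prop_coll}(iv). Applying $\tau_{t,0}$ to the above identity, using Lemma~\ref{lem:prop_coll}(vi) to recognise $\mD_0 \Phi_0(\tau_{t,0\,}\om, x)$ as the weak $t$-derivative of $\Phi_0(\tau_{t,0\,}\om, x)$, and then computing $\partial_t + \cL_t^\om$ on the right-hand side of \eqref{eq:harm_coord2} shows $\partial_t \Phi + \cL_t^\om \Phi = 0$ for a.e.\ $t$, giving (ii); the initial condition $\Phi(\om,0,0) = 0$ is automatic since $\Phi_0(\om, 0) = 0$.

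\textbf{Sublinearity (iii) --- the main obstacle.} I expect this to be the hard part. The plan is two-fold. First, combining the cocycle property of $\chi_0$ with the space-time ergodic theorem \eqref{eq:ergodic:time-space} (this is the content of Proposition~\ref{prop:sublinearity:l1} in Section~\ref{sec:corr_sublin}) yields the averaged sublinearity
\begin{equation*}
  \lim_{n \to \infty}\, \frac{1}{n}\, \bigl\|\Phi(\om, \cdot, \cdot) - \Pi(\cdot)\bigr\|_{1, 1, Q(n)} \;=\; 0 \qquad \prob\text{-a.s.}
\end{equation*}
Second, since $\chi = \Pi - \Phi$ solves the time-inhomogeneous Poisson-type equation \eqref{eq:pois_chi}, a maximal inequality (Proposition~\ref{prop:maxin_lattice} of Section~\ref{sec:mos_it}) promotes this averaged smallness into sup-norm smallness on $Q(n)$. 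The hard part is establishing that maximal inequality: it is proved by a space-time Moser iteration in the spirit of \cite{KK77}, and the non-symmetry caused by $\partial_t$ forces the iteration to proceed simultaneously in the space and time integrability exponents, which is precisely what dictates the moment condition \eqref{eq:condpqprime} in Assumption~\ref{ass:moment}.
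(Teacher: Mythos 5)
Your overall strategy---regularised corrector equation solved by Lax--Milgram, passage to the limit $\lambda\downarrow 0$, then the two-step proof of sublinearity combining the $L^1$-convergence from the ergodic theorem with a Moser-iteration maximal inequality---matches the paper's plan exactly, and your reading of part (iii) as deferred to Sections~\ref{sec:corr_sublin} and~\ref{sec:mos_it} is correct. However, there are two linked difficulties in your treatment of parts (i) and (ii) that the actual proof has to work hard to overcome.

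\textbf{Choice of Hilbert space.} You propose to run Lax--Milgram directly on $L^2_{\mathrm{pot}}$, producing $\phi_\lambda^j\in L^2_{\mathrm{pot}}$. But elements of $L^2_{\mathrm{pot}}$ are functions on $\Om\times\cN$ (closures of gradients), and the time-derivative $\mD_0$ acts on scalar functions $\vp:\Om\to\bbR$; a generic element of $L^2_{\mathrm{pot}}$ has no reason to be in $\cD(\mD_0)$ after extension, so the ``antisymmetric contribution coming from $\mD_0$'' is not a well-defined bilinear form on $L^2_{\mathrm{pot}}$. The paper instead introduces the Hilbert space $\cH^1 = \{\vp\in\cD(\mD_0) : \mD\vp \in L^2_{\mathrm{pot}}\}$ of \emph{scalar potentials}, defines the regularised form $Q^\be$ there, and obtains $\psi^{\be,k}\in\cH^1$; only the gradient $\mD\psi^{\be,k}$ lives in the neighbourhood of $L^2_{\mathrm{pot}}$.

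\textbf{The serious gap: the time-derivative is not controlled in $L^2$.} Testing the regularised equation against the solution yields $\|\mD\psi^{\be,k}\|_{L^2(\Om\times\cN,m)}^2 + \be\|\mD_0\psi^{\be,k}\|_{L^2}^2 + \be\|\psi^{\be,k}\|_{L^2}^2 \le \mean[\mu_0^\om(0)]$. This gives a $\be$-uniform bound only on $\mD\psi^{\be,k}$; the term $\mD_0\psi^{\be,k}$ can blow up like $\be^{-1/2}$. Therefore the limit object $\Phi_0$ has no a priori reason to lie in $\cD(\mD_0)$, and the ``pointwise identity'' $\mD_0\Phi_0(\om,x) + (\cL_0^\om\Phi_0(\om,\cdot))(x) - (\cL_0^\om\Phi_0(\om,\cdot))(0) = 0$ that your proof of (ii) asserts is not available by passage to the limit. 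The paper's workaround is essential and not cosmetic: one shows that the linear functionals $F^{\be,k}(\xi) := -\langle\mD_0\psi^{\be,k},\xi\rangle_{L^2(\Om,\prob)}$ are bounded in $\cH^{-1}$ uniformly in $\be$ (Lemma~\ref{lem:energyestimates}), extracts a weak-$\ast$ limit $F^k\in\cH^{-1}$, and identifies $F^k$ only on the dense subclass $\cH^1_b$ by a non-trivial computation (\eqref{eq:timederivative}). One then replaces $\xi$ by shifted test functions $\xi\circ\tau_{-t,-z}$, integrates in $t$ against $\ze\in C_0^1(\bbR)$, and uses the integration-by-parts identity \eqref{eq:ibpf_D0} to obtain the equation in the distributional sense \eqref{eq:presol}; absolute continuity in $t$ of $t\mapsto\Phi_0(\tau_{t,0}\om,z)$ is a \emph{consequence} of this weak formulation, not a hypothesis. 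Your proposal skips the $\cH^{-1}$ weak limit and the distributional reformulation, and these are precisely the steps that make the construction work in the absence of a sector condition.
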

\begin{remark}
  Notice that the harmonic coordinate, as defined above, satisfies the cocycle property (in time-space), that is for $\prob$-a.e.\ $\om$,
  \begin{align*}
    \Phi(\om, t+s, x+y) - \Phi(\om, t, x)
    \;=\;
    \Phi(\tau_{t,x\,}\om, s, y)
  \end{align*}
  for all $s,t \in \bbR$ and $x, y \in \bbZ^d$.  Indeed, since $\Phi_0^j \in L^2_{\mathrm{cov}}$ for any $j \in \{1, \ldots, d\}$, we deduce from \eqref{eq:harm_coord2} that $\cL_r^{\tau_{t,x} \om} \Phi_0(\tau_{t+r,x\,}\om, \cdot)(0) = \cL^{\om}_{r+t} \Phi(\om, t+r, \cdot)(x)$. Hence,
  \begin{align*}
    \Phi(\tau_{t,x\,}\om, s, y)
    &\;=\;
    \Phi_0(\tau_{t+s, x\,}\om, y) 
    - \int_t^{t+s} \big(\cL^{\om}_{r} \Phi(\om, r, \cdot)\big)(x)\, \mathrm{d}r
    \\[.5ex]
    &\overset{\!\!\!\eqref{eq:harm_coord}\!\!\!}{\;=\;}
    \Phi_0(\tau_{t+s, 0\,}\om, x+y) - \Phi_0(\tau_{t+s, 0\,}\om, x) + \Phi(\om, t+s, x) - \Phi(\om, t, x)
    \\[.5ex]
    &\overset{\!\!\!\eqref{eq:harm_coord2}\!\!\!}{\;=\;}
    \Phi(\om, t+s, x+y) - \Phi(\om, t, x).
  \end{align*}
\end{remark}
Before we prove Theorem~\ref{thm:harm_coord} we define the corrector and collect some of its properties.
\begin{definition}
  The corrector $\chi = (\chi^1, \ldots, \chi^d)\!: \Om \times \bbR \times \bbZ^d \to \bbR^d$ is defined as
  \begin{align*}
    \chi(\om, t, x) \;\ldef\; \Pi(\om,x) \,-\, \Phi(\om,t,x).
  \end{align*}
\end{definition}
\begin{corro}\label{corro:property:chi}
  Let $\chi_0^j$ be defined as in the previous theorem and set $\chi_0 = (\chi_0^1, \ldots, \chi_0^d)$.
  \begin{enumerate}[(i)]
  \item $\chi_0^j \in L^1(\prob)$ with $\mean[\chi_0^j(\om, x)] = 0$ for all $|x|=1$.
  \item For $\prob$-a.e.\ $\om$, $t \in \bbR$ and $x \in \bbZ^d$, the corrector can be written as
    \begin{align}\label{eq:corr_decomp}
      \chi(\om, t, x)
      \;=\;
      \chi_0(\tau_{t,0\,}\om, x)
      \,+\,
      \int_0^t \big(\cL_s^{\om} \Phi_0(\tau_{s,0\,}\om, \cdot)\big)(0)\, \md s.
    \end{align}
  \end{enumerate}
\end{corro}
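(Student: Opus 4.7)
The plan splits Corollary~\ref{corro:property:chi} into its two claims. Claim (ii) is a direct algebraic rearrangement of the definition of $\Phi$ in \eqref{eq:harm_coord2}, while claim (i) needs a limiting argument that pairs the approximation structure of $L^2_{\mathrm{pot}}$ with the finite inverse moment from Assumption~\ref{ass:P}(i). I will handle (i) first, since it is the only nontrivial part.

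For part (i), Theorem~\ref{thm:harm_coord}(i) says $\chi_0^j$ is the extension of a unique $\psi_j \in L^2_{\mathrm{pot}}$; in particular $\chi_0^j(\om, x) = \psi_j(\om, x)$ for $x \in \cN$. Since $L^2_{\mathrm{pot}}$ is by definition the closure in $L^2(\Om \times \cN, m)$ of gradients of bounded functions, I pick a sequence of bounded $\phi_n\!:\Om\to\bbR$ with $\mD\phi_n \to \psi_j$ in this norm. The key estimate is the Cauchy--Schwarz inequality in the form
\[
  \mean\bigl[|f(\om, x)|\bigr]
  \;\leq\;
  \mean\bigl[\om_0(0,x)\, f(\om,x)^2\bigr]^{1/2}\,
  \mean\bigl[\om_0(0,x)^{-1}\bigr]^{1/2},
  \qquad x \in \cN,
\]
whose last factor is finite by Assumption~\ref{ass:P}(i) (and where $\om_0(0,x)>0$ $\prob$-a.s.\ by the remark following it). Applied to $f = \psi_j$, this gives $\psi_j(\cdot, x) \in L^1(\prob)$; applied to $f = \mD\phi_n - \psi_j$, it gives $\mean[\mD\phi_n(\om,x)] \to \mean[\psi_j(\om,x)]$. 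Since $\phi_n$ is bounded, stationarity of $\prob$ yields $\mean[\mD\phi_n(\om, x)] = \mean[\phi_n \circ \tau_{0,x}] - \mean[\phi_n] = 0$, whence $\mean[\chi_0^j(\om, x)] = \mean[\psi_j(\om, x)] = 0$ for all $|x|=1$. For a general $x \in \bbZ^d$, the cocycle property of $\chi_0^j$ together with stationarity expresses $\chi_0^j(\om, x)$ as a finite telescoping sum $\sum_i \chi_0^j(\tau_{0, x_{i-1}}\om, x_i - x_{i-1})$ of shifts of nearest-neighbor values, each in $L^1(\prob)$, so $\chi_0^j(\cdot, x) \in L^1(\prob)$.

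For part (ii), I substitute \eqref{eq:harm_coord2} into the definition $\chi(\om, t, x) = \Pi(\om, x) - \Phi(\om, t, x)$ and use that $\Pi(\om, x) = x$ is independent of $\om$, so in particular $\Pi(\om, x) = \Pi(\tau_{t,0}\om, x)$. This gives
\[
  \chi(\om, t, x)
  \;=\;
  \Pi(\tau_{t,0}\om, x) - \Phi_0(\tau_{t,0}\om, x)
  \,+\,
  \int_0^t \bigl(\cL_s^\om \Phi_0(\tau_{s,0}\om, \cdot)\bigr)(0)\, \md s,
\]
and the first two terms combine into $\chi_0(\tau_{t,0}\om, x)$ by the definition $\chi_0 = \Pi - \Phi_0$ from Theorem~\ref{thm:harm_coord}(i), yielding \eqref{eq:corr_decomp}.

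The main obstacle is the weighted-to-unweighted transition in the Cauchy--Schwarz step of part~(i): the closure defining $L^2_{\mathrm{pot}}$ uses the weighted norm $\|\cdot\|_{L^2(\Om \times \cN, m)}$, whereas the claim $\mean[\chi_0^j(\om, x)] = 0$ is unweighted. Without the inverse moment $\mean[\om_0(0,x)^{-1}] < \infty$ one could not exchange the $L^2$-limit with the bare expectation and the zero-mean conclusion would fail. Once this step is in place, the remainder of (i) is routine, and (ii) is bookkeeping.
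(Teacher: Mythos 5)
Your proof is correct and follows the same route the paper implicitly uses, since the paper simply asserts both claims are ``immediate consequences of Theorem~\ref{thm:harm_coord}.'' You have correctly filled in the detail the paper leaves unsaid for (i) --- the Cauchy--Schwarz bound $\mean[|f(\om,x)|] \leq \mean[\om_0(0,x) f^2]^{1/2} \mean[\om_0(0,x)^{-1}]^{1/2}$ combined with $\mean[\om_0(0,x)^{-1}]<\infty$ is indeed what converts convergence in $L^2(\Om\times\cN,m)$ into convergence of the unweighted means and yields $\mean[\chi_0^j(\om,x)]=0$ --- and the rearrangement for (ii) matches the paper's one-line observation that $\chi_0(\tau_{t,0}\om,\cdot)$ absorbs $\Pi-\Phi_0(\tau_{t,0}\om,\cdot)$.
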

\begin{proof}
  These are immediate consequences from Theorem~\ref{thm:harm_coord}. Note that \eqref{eq:corr_decomp} follows from (ii) since $\chi_0(\tau_{t,0}\om, 0) = 0$ by Lemma~\ref{lemma:L2cov}~(i).
\end{proof}
The rest of this section is devoted to the construction of the harmonic coordinates and the proof of Theorem~\ref{thm:harm_coord} (i) and (ii). Statement (iii) is equivalent to the sublinearity of the corrector and will be proven in Section~\ref{sec:corr_sublin} below.
\vspace{1ex}

Let $\cH^1 \ldef \{\vp \in \cD(\mD_0) \,:\, \mD \vp \in L^2_{\mathrm{pot}}\}$ equipped with the norm given by
\begin{align*}
  \|\vp\|^2_{\cH^1}
  \;\ldef\;
  \|\vp\|_{L^2(\Om,\prob)}^2
  \,+\, \|\mD_0 \vp\|_{L^2(\Om,\prob)}^2 \,+\, \|\mD \vp\|^2_{L^2(\Om \times \cN, m)},
\end{align*}
and a scalar product $\langle \cdot, \cdot \rangle_{\cH^1}$ defined  by polarisation.  It is easy to see that $\cH^1$ is a Hilbert space.  Also, $\cH^1$ is not trivial, since for $\vp \in L^\infty(\Om, \prob)$ and $f \in C^\infty_0(\bbR)$ the function $\vp_{f}\ldef\int_{\bbR} f(s)(\vp \circ \tau_{s,0})\, \md s$ belongs to $\cD(\mD_0) \cap L^\infty(\Om, \prob) \subset \cH^1$.

We want to solve the following equation
\begin{align}\label{eq:correctorequation}
  Q^{\be} (\vp, \xi) \;=\; B^k(\xi),
  \qquad \forall\, \xi \in\cH^1,\ k = {1, \dots, d},
\end{align}
where $B^k(\xi) \ldef \langle \Pi^k, \mD \xi \rangle_{L^2(\Om \times \cN, m)}$ and
\begin{align*}
  Q^{\be} (\vp, \xi)
  &\;\ldef\;
  -2\langle \mD_0 \vp,  \xi\rangle_{L^2(\Om,\prob)}
  + \langle \mD \vp, \mD \xi \rangle_{L^2(\Om \times \cN, m)}
  \\[.5ex]
  &\mspace{72mu}+ \be\, \langle \mD_0 \vp, \mD_0 \xi \rangle_{L^2(\Om, \prob)}
  + \be\, \langle \vp, \xi \rangle_{L^2(\Om,\prob)}.
\end{align*}
\begin{lemma}
  For all $\be > 0$, $Q^{\be}\!: \cH^1 \times \cH^1 \to \bbR$ is a coercive bounded bilinear form, and for all $k = {1, \dots, d}$, $B^k$ is a bounded and linear operator on $\cH^1$.
\end{lemma}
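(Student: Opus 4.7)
The plan is to verify the four claimed properties directly from the definitions, relying on two key inputs already established in the excerpt: the antisymmetry of $\mD_0$ from Lemma~\ref{lem:prop_coll}(i), and the bound $\|\Pi^k\|_{L^2_{\mathrm{cov}}}^2 \le \mean[\mu_0^\om(0)] < \infty$ guaranteed by Assumption~\ref{ass:P}(i). Bilinearity of $Q^\be$ and linearity of $B^k$ are immediate since each summand is bilinear (resp.\ linear) in its arguments, so the real content lies in boundedness and coercivity.

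For boundedness of $Q^\be$, I would estimate each of the four terms separately via Cauchy--Schwarz in the appropriate Hilbert space. For example,
\begin{align*}
  |\langle \mD_0\vp, \xi\rangle_{L^2(\Om,\prob)}|
  \;\le\; \|\mD_0\vp\|_{L^2(\Om,\prob)}\,\|\xi\|_{L^2(\Om,\prob)}
  \;\le\; \|\vp\|_{\cH^1}\,\|\xi\|_{\cH^1},
\end{align*}
and the three remaining terms are treated identically (the gradient term in the norm on $L^2(\Om\times \cN, m)$, the other two in $L^2(\Om,\prob)$). Summing yields $|Q^\be(\vp,\xi)| \le (2+2\be)\|\vp\|_{\cH^1}\|\xi\|_{\cH^1}$. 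For $B^k$, Cauchy--Schwarz in $L^2(\Om \times \cN, m)$ gives $|B^k(\xi)| \le \|\Pi^k\|_{L^2_{\mathrm{cov}}} \|\mD\xi\|_{L^2(\Om\times\cN,m)} \le \|\Pi^k\|_{L^2_{\mathrm{cov}}}\|\xi\|_{\cH^1}$, which is finite by the bound on $\|\Pi^k\|_{L^2_{\mathrm{cov}}}$ recalled above.

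Coercivity is the one place where a slight observation is needed, but it is a direct consequence of Lemma~\ref{lem:prop_coll}(i). Testing $Q^\be$ against $\vp$ itself gives
\begin{align*}
  Q^\be(\vp,\vp)
  \;=\;
  -2\langle \mD_0\vp,\vp\rangle_{L^2(\Om,\prob)}
  \,+\, \|\mD\vp\|_{L^2(\Om\times\cN,m)}^2
  \,+\, \be\|\mD_0\vp\|_{L^2(\Om,\prob)}^2
  \,+\, \be\|\vp\|_{L^2(\Om,\prob)}^2,
\end{align*}
and the antisymmetry of $\mD_0$ forces $\langle \mD_0 \vp, \vp\rangle_{L^2(\Om,\prob)} = 0$, so the potentially problematic first-order term vanishes. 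What remains is a sum of three nonnegative terms which together equal $\|\mD\vp\|_{L^2(\Om\times\cN,m)}^2 + \be(\|\mD_0\vp\|_{L^2(\Om,\prob)}^2 + \|\vp\|_{L^2(\Om,\prob)}^2) \ge \min(1,\be)\,\|\vp\|_{\cH^1}^2$, which is the desired coercivity estimate.

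There is no serious obstacle here: the proof is routine once one notices that the only nonsymmetric contribution is the drift term $-2\langle \mD_0\vp,\xi\rangle$, and that this term automatically disappears on the diagonal $\xi = \vp$ by antisymmetry. This is precisely why introducing the Hilbert space $\cH^1$, which incorporates the $\mD_0$-regularity into its norm, is the right setting for an application of Lax--Milgram in the next step.
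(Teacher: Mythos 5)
Your argument is correct and essentially identical to the paper's: boundedness of $Q^\be$ and $B^k$ via Cauchy--Schwarz term by term, and the antisymmetry of $\mD_0$ from Lemma~\ref{lem:prop_coll}(i) to annihilate the drift term on the diagonal, giving coercivity with constant $1\wedge\be$. (The constant $2+2\be$ you quote for boundedness should be $3+2\be$ with your term-by-term estimate---you dropped the gradient term---but the exact constant is immaterial.)
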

\begin{proof}
  The statement is true basically by definition and Lemma~\ref{lem:prop_coll} (i).  Indeed,
  \begin{align*}
    Q^{\be}(\vp, \vp)
    \;\geq\;
    (1 \wedge \be)\, \|\vp\|^2_{\cH^1}
  \end{align*}
  and by the Cauchy-Schwarz inequality
  \begin{align*}
    |Q^{\be} (\vp, \xi)|
    &\;\leq\;
    (2 + \be)\, \|\psi\|_{\cH^1} \|\xi\|_{\cH^1}.
  \end{align*}
  Similarly, since $ \mean[ \om_0(0,e)] < \infty$ it follows that $B^k$ is bounded for all $k$.
\end{proof}
By an application of Lax-Milgram Lemma it follows that for every $ \be > 0$ there exists $\psi^{\be,k} \in \cH^1$ such that $Q^{\be} (\psi^{\be, k}, \xi) = B^k(\xi)$ holds for all $\xi \in \cH^1$.  In particular, the equation is satisfied for $\xi = \psi^{\be, k}$. We use this information to obtain a first energy bound.
\begin{lemma} \label{lem:energyestimates}
  For all $\be > 0$ and $k = 1, \dots, d$,
  \begin{align}\label{est:energy}
    \| \mD \psi^{\be,k} \|^2_{L^2(\Om \times \cN, m)}
    + \be\, \| \mD_0 \psi^{\be,k} \|^2_{L^2(\Om,\prob)}
    + \be\, \| \psi^{\be,k} \|^2_{L^2(\Om,\prob)}
    \;\leq\;
    \mean\!\big[\mu_0^{\om}(0)\big].
  \end{align}
  Moreover, for $\be \in (0,1]$ and all $k = 1, \dots, d$,
  \begin{align}\label{est:linear_functional}
    \big| \langle  \mD_0 \psi^{\be,k}, \xi \rangle_{L^2(\Om,\prob)}\big|
    \;\leq\;
    2\, \mean\!\big[\mu_0^{\om}(0)\big]^{1/2}\, \|\xi\|_{\cH^1}.
  \end{align}
\end{lemma}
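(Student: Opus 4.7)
The plan is to derive both estimates directly from the corrector equation $Q^{\be}(\psi^{\be,k}, \xi) = B^k(\xi)$, by a judicious choice of test function for the first bound and by algebraic rearrangement for the second.

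For \eqref{est:energy}, I would test the equation against $\xi = \psi^{\be,k}$ itself. By Lemma~\ref{lem:prop_coll}(i), the antisymmetry of $\mD_0$ kills the first-order term $\langle \mD_0 \psi^{\be,k}, \psi^{\be,k}\rangle_{L^2(\Om,\prob)}=0$, so that
\begin{align*}
  \|\mD \psi^{\be,k}\|^2_{L^2(\Om \times \cN, m)}
  + \be\, \|\mD_0 \psi^{\be,k}\|^2_{L^2(\Om,\prob)}
  + \be\, \|\psi^{\be,k}\|^2_{L^2(\Om,\prob)}
  \;=\;
  \langle \Pi^k, \mD \psi^{\be,k}\rangle_{L^2(\Om \times \cN, m)}.
\end{align*}
Applying Cauchy--Schwarz in $L^2(\Om \times \cN, m)$ to the right-hand side and using $\|\Pi^k\|^2_{L^2_{\mathrm{cov}}} \leq \mean[\mu_0^{\om}(0)]$ (as computed at the start of the subsection), one obtains $X^2 + A \leq C X$, where $X = \|\mD \psi^{\be,k}\|_{L^2(\Om\times\cN,m)}$, $A\geq 0$ is the sum of the two $\be$-weighted terms, and $C = \|\Pi^k\|_{L^2_{\mathrm{cov}}}$. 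Since $A\geq 0$ forces $X\leq C$, this yields $X^2 + A \leq C^2 \leq \mean[\mu_0^{\om}(0)]$, which is \eqref{est:energy}.

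For \eqref{est:linear_functional}, I would rearrange $Q^{\be}(\psi^{\be,k}, \xi) = B^k(\xi)$ to isolate $\langle \mD_0 \psi^{\be,k}, \xi\rangle_{L^2(\Om,\prob)}$, so that
\begin{align*}
  2\, \langle \mD_0 \psi^{\be,k}, \xi\rangle_{L^2(\Om,\prob)}
  &\;=\;
  \langle \mD \psi^{\be,k}, \mD \xi\rangle_{L^2(\Om \times \cN, m)}
  + \be\, \langle \mD_0 \psi^{\be,k}, \mD_0 \xi\rangle_{L^2(\Om,\prob)}
  \\
  &\mspace{72mu}
  + \be\, \langle \psi^{\be,k}, \xi\rangle_{L^2(\Om,\prob)}
  - \langle \Pi^k, \mD\xi\rangle_{L^2(\Om \times \cN, m)}.
\end{align*}
Then I would estimate each of the four terms on the right by Cauchy--Schwarz in the appropriate space, and bound the factors involving $\psi^{\be,k}$ using \eqref{est:energy}. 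The first and last terms are immediately at most $\mean[\mu_0^{\om}(0)]^{1/2}\,\|\xi\|_{\cH^1}$, since $\|\mD\xi\|_{L^2(\Om\times\cN,m)} \leq \|\xi\|_{\cH^1}$ and $\|\Pi^k\|_{L^2_{\mathrm{cov}}}\leq \mean[\mu_0^\om(0)]^{1/2}$. For the two middle terms, writing $\be = \sqrt{\be}\cdot\sqrt{\be}$ and using the restriction $\be \leq 1$ to absorb one factor of $\sqrt{\be}$ yields the same bound $\mean[\mu_0^{\om}(0)]^{1/2}\,\|\xi\|_{\cH^1}$. Summing gives $2|\langle \mD_0 \psi^{\be,k}, \xi\rangle| \leq 4\,\mean[\mu_0^{\om}(0)]^{1/2}\,\|\xi\|_{\cH^1}$, which is precisely \eqref{est:linear_functional}.

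There is no real obstacle here: the argument is the standard Lax--Milgram \emph{a priori} estimate coupled with the antisymmetry of the time derivative $\mD_0$. The only point requiring a little care is the second estimate, where one must exploit the bound $\be \leq 1$ together with the weighted form of \eqref{est:energy} (in which $\mD_0 \psi^{\be,k}$ and $\psi^{\be,k}$ are controlled only up to a factor $\be^{-1/2}$) to produce an estimate uniform in $\be$ on a linear functional that does not itself involve $\be$.
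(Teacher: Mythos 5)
Your proof is correct and follows essentially the same route as the paper: test the Lax--Milgram equation against $\psi^{\be,k}$ itself, use $\langle \psi^{\be,k},\mD_0\psi^{\be,k}\rangle_{L^2(\Om,\prob)}=0$ to kill the asymmetric term, apply Cauchy--Schwarz, and observe that $X^2+A\le CX$ with $A\ge 0$ forces $X\le C$ and hence $X^2+A\le C^2$; then rearrange the equation to isolate $\langle\mD_0\psi^{\be,k},\xi\rangle$ and estimate term by term. In fact your handling of the middle two terms is slightly more careful than the paper's terse writeup: you correctly retain the $\be$-prefactors, split $\be=\sqrt\be\cdot\sqrt\be$, absorb one $\sqrt\be$ via $\be\le 1$ and control $\sqrt\be\,\|\mD_0\psi^{\be,k}\|_{L^2(\Om,\prob)}$ and $\sqrt\be\,\|\psi^{\be,k}\|_{L^2(\Om,\prob)}$ by \eqref{est:energy}, whereas the paper drops the $\be$'s before invoking \eqref{est:energy}, which as literally written would only give a bound growing like $\be^{-1/2}$ --- so your version closes that small gap.
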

\begin{proof}
  Since $\langle \psi^{\be,k}, \mD_0 \psi^{\be,k} \rangle_{L^2(\Om,\prob)} = 0$ we get from $Q^{\be} (\psi^{\be,k},\psi^{\be,k}) = B^k(\psi^{\be,k})$ that
  \begin{align} \label{eq:phi_beta_k}
    &\| \mD \psi^{\be,k} \|_{L^2(\Om \times \cN, m)}^2
    +  \be\, \| \mD_0 \psi^{\be,k} \|^2_{L^2(\Om, \prob)}
    + \be\, \| \psi^{\be,k} \|^2_{L^2(\Om, \prob)}
    \nonumber\\[.5ex]
    &\mspace{36mu}\leq\;
    \mean\!\big[ \mu_0^{\om}(0)\big]^{1/2}\,
    \|\mD \psi^{\be, k}\|_{L^2(\Om \times \cN, m)},
  \end{align}
  where we used the Cauchy-Schwarz inequality in the last step.  By dropping the positive terms with $\be$ in front, we obtain
  \begin{align} \label{eq:energy_estimate}
    \| \mD \psi^{\be,k} \|_{L^2(\Om \times \cN, \prob)}
    \;\leq\;
    \mean\!\big[\mu_0(0)\big]^{1/2}.
  \end{align}
  By combining this with \eqref{eq:phi_beta_k} we immediately get \eqref{est:energy}.

  In order to prove \eqref{est:linear_functional} we use \eqref{eq:correctorequation}, the triangle inequality and the Cauchy-Schwarz inequality to obtain that for any $\be \in (0, 1]$,
  \begin{align*}
    &2\, \big| \langle \mD_0 \psi^{\be,k}, \xi \rangle_{L^2(\Om, \prob)}\big|
    \\[.5ex]
    &\mspace{16mu}\leq\;
    \big| \langle \mD \psi^{\be,k}, \mD \xi \rangle_{L^2(\Om \times \cN, m)} \big|
    +
    \big| \langle \mD_0 \psi^{\be,k}, \mD_0\xi \rangle_{L^2(\Om, \prob)}\big|
    +
    \big| \langle \psi^{\be,k}, \xi \rangle_{L^2(\Om,\prob)}\big|
    +
    |B^k(\xi)|
    \\[.5ex]
    &\mspace{16mu}\leq\;
    \bigg(
      \| \mD \psi^{\be,k} \|_{L^2(\Om \times \cN, m)}
      + \| \mD_0 \psi^{\be,k} \|_{L^2(\Om,\prob)}
      + \| \psi^{\be,k} \|_{L^2(\Om,\prob)}
      + \sqrt{\mean[\mu_0(0)]}
    \bigg)\, \|\xi\|_{\cH^1}.
  \end{align*}
  In view of \eqref{est:energy}, the desired bound \eqref{est:linear_functional} follows.
\end{proof}

By Lemma~\ref{lem:energyestimates} we have that $\mD \psi^{\be,k}$ are uniformly bounded in $L^2(\Om \times \cN, m )$.  Therefore, there exist $\psi^k \in L^2(\Om \times \cN, m)$ such that weakly in $L^2(\Om \times \cN, m)$ along a subsequence $\be \downarrow 0$
\begin{align*}
  \mD \psi^{\be, k} \;\rightharpoonup\; \psi^k.
\end{align*}
In fact $\psi^k \in L^2_{\mathrm{pot}}$, since for all $\xi \in L^2_{\mathrm{sol}}$ we have that $\langle \mD \psi^{\be,k},\xi\rangle_{L^2(\Om \times \cN, m)} = 0$ for all $\be > 0$ and $\langle \psi^k, \xi\rangle_{L^2(\Om \times \cN, m)} = \lim_{\be \to 0} \langle \mD \psi^{\be,k},\xi\rangle_{L^2(\Om \times \cN, m)}$.

As a further consequence of Lemma~\ref{lem:energyestimates} we observe that the  linear functional $F^{\be,k}\!: \cH^1 \to \bbR$ defined by
\begin{align*}
  F^{\be,k}(\xi) \;\ldef\; -\langle \mD_0 \psi^{\be,k} , \xi\rangle_{L^2(\Om, \prob)}
\end{align*}
are uniformly bounded in $\cH^{-1}$, the dual of $\cH^1$.  It follows that there exist $F^k \in \cH^{-1}$ such that weakly in $\cH^{-1}$ along a subsequence $\be \downarrow 0$
\begin{align*}
  F^{\be,k} \;\rightharpoonup\; F^k.
\end{align*}
Recall that weak convergence in $\cH^{-1}$ implies that $F^{\be,k}(\xi) \to F^k(\xi)$ for all $\xi\in \cH^1$.  Thus, by taking the limit in \eqref{eq:correctorequation} as $\be \to 0$ along some subsequence we get
\begin{align}\label{eq:abstracteq}
  2\, F^k(\xi )
  \,+\,
  \langle \psi^k, \mD \xi \rangle_{L^2(\Om \times \cN, m)}
  \;=\;
  B^k(\xi),
  \qquad \forall\, \xi \in \cH^1.
\end{align}
The first term on the left of \eqref{eq:abstracteq} is implicit. We want to identify it at least for a class of functions $\xi \in \cH^1$. This is the content of the next lemma.
\begin{lemma}
  Consider the class
  \begin{align*}
    \cH^1_b
    \;\ldef\;
    \big\{
      \xi \in L^\infty(\Om, \prob) \cap \cD(\mD_0)
      \,:\,
      \mD_0\xi \in L^\infty(\Om, \prob)
    \big\}
  \end{align*}
  Then, $\cH^1_b$ is dense in $L^p(\Om, \prob)$ for all $p \geq 1$.  Moreover, for any $\xi\in \cH^1_b$ and $x \in \cN$
  \begin{align}\label{eq:timederivative}
    F^k(\mD_x \xi)
    \;=\;
    \langle \psi^k(\cdot,-x), \mD_0 \xi \rangle_{L^2(\Om, \prob)}.
  \end{align}
\end{lemma}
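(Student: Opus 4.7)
The plan is to handle the density assertion and the identity separately. For density I would use a truncation-and-mollification argument: any $\xi \in L^p(\Om, \prob)$ is approximated in $L^p$ by the bounded truncations $\xi_n = \xi \, \indicator_{\{|\xi|\le n\}}$, so it suffices to approximate a bounded $\eta \in L^\infty(\Om, \prob)$ by elements of $\cH^1_b$. To that end I would mollify in the time variable: pick $\rho \in C^\infty_0(\bbR)$ with $\int \rho = 1$ and set $\eta_\eps \ldef \int_\bbR \rho_\eps(s)\, T_s \eta \, \md s$. Then $\eta_\eps$ is clearly bounded by $\|\eta\|_\infty$. Differentiation of $T_t \eta_\eps = \int \rho_\eps(u-t)\, T_u \eta\, \md u$ at $t=0$, justified in $L^2$ via dominated convergence since $\rho_\eps$ has compact support, yields $\mD_0 \eta_\eps = -\int_\bbR \rho_\eps'(s)\, T_s \eta\, \md s$, which is again bounded. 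Hence $\eta_\eps \in \cH^1_b$, and the strong continuity of $\{T_t\}$ on $L^p$ from Remark~\ref{rem:time_shifts} together with dominated convergence gives $\eta_\eps \to \eta$ in $L^p$ as $\eps \downarrow 0$.

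For the identity, I would first verify that $\mD_x \xi$ is a legitimate element of $\cH^1$ whenever $\xi \in \cH^1_b$. Boundedness of $\xi$ and $\mD_0 \xi$ yields $\mD_x \xi \in L^\infty \cap \cD(\mD_0)$ with $\mD_0(\mD_x\xi) = \mD_x(\mD_0 \xi) \in L^\infty$, and its horizontal gradient lies in $L^2_{\mathrm{pot}}$ because $\mD_x \xi$ is itself a bounded function on $\Om$. The computation then starts from
\begin{align*}
  F^k(\mD_x \xi) \;=\; \lim_{\be \downarrow 0} F^{\be,k}(\mD_x\xi) \;=\; -\lim_{\be \downarrow 0} \langle \mD_0 \psi^{\be,k},\, \mD_x \xi\rangle_{L^2(\Om, \prob)},
\end{align*}
using the weak convergence $F^{\be,k} \rightharpoonup F^k$ in $\cH^{-1}$. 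A successive application of the antisymmetry of $\mD_0$, the commutation $\mD_0 \mD_x = \mD_x \mD_0$, and the adjoint identity $\langle \psi^{\be,k}, \mD_x \mD_0 \xi\rangle_{L^2(\Om, \prob)} = \langle \mD_{-x} \psi^{\be,k}, \mD_0 \xi\rangle_{L^2(\Om, \prob)}$ (all from Lemma~\ref{lem:prop_coll}) rewrites this as $\lim_{\be \downarrow 0} \langle \mD_{-x} \psi^{\be,k},\, \mD_0 \xi\rangle_{L^2(\Om, \prob)}$.

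The main obstacle is identifying this last limit with $\langle \psi^k(\cdot, -x),\, \mD_0 \xi\rangle_{L^2(\Om, \prob)}$. The difficulty is that $\mD\psi^{\be,k} \rightharpoonup \psi^k$ is only known weakly in the weighted space $L^2(\Om \times \cN, m)$, whereas the target inner product sits in the unweighted $L^2(\Om, \prob)$. To bridge the two I would test the weak convergence against
\begin{align*}
  \phi(\om, y) \;\ldef\; \frac{\mD_0 \xi(\om)}{\om_0(0, -x)}\, \indicator_{\{y = -x\}},
\end{align*}
engineered so that the weight $\om_0(0,-x)$ appearing in $m$ exactly cancels the denominator of $\phi$. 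The crucial point is that $\phi \in L^2(\Om \times \cN, m)$, which holds because
\begin{align*}
  \|\phi\|_{L^2(\Om \times \cN, m)}^2 \;=\; \mean\!\Big[\frac{(\mD_0 \xi)^2}{\om_0(0,-x)}\Big] \;\le\; \|\mD_0 \xi\|_\infty^2\, \mean\!\big[\om_0(0,-x)^{-1}\big] \;<\; \infty,
\end{align*}
invoking the boundedness of $\mD_0 \xi$ built into the definition of $\cH^1_b$ together with the reciprocal moment condition from Assumption~\ref{ass:P}(i); without the latter one could not extract a single directional component from the weighted weak limit. Pairing $\mD\psi^{\be,k}$ and $\psi^k$ against this $\phi$ collapses to $\langle \mD_{-x}\psi^{\be,k}, \mD_0\xi\rangle_{L^2(\Om, \prob)}$ and $\langle \psi^k(\cdot, -x), \mD_0\xi\rangle_{L^2(\Om, \prob)}$ respectively, so the weak convergence delivers exactly the desired identification.
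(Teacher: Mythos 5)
Your proposal is correct and follows essentially the same route as the paper: the same time-mollification (truncate, then convolve against a compactly supported $C^\infty$ kernel in $s$ using the SCCG $\{T_t\}$) for density, and the same chain of Lemma~\ref{lem:prop_coll} identities followed by testing the weak convergence $\mD\psi^{\be,k}\rightharpoonup\psi^k$ in $L^2(\Om\times\cN,m)$ against the very same weighted test function (your $\phi$ coincides with the paper's $\Xi^x$ since $\om_0(0,-x)=\om_0(-x,0)$), with membership in $L^2(\Om\times\cN,m)$ secured by Assumption~\ref{ass:P}(i).
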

\begin{proof}
  For the proof of the density it suffices to show that $\cH^1_b$ is dense in $L^\infty(\Om, \prob)$ with respect to the $L^p(\Om, \prob)$-norm, $p \geq 1$.  To this end, consider $f \in C^\infty_0(\bbR; [0,1])$ such that $\int_\bbR f(s)\, \md s = 1$. For  $\vp \in L^\infty(\Om, \prob)$, define $\vp_{\ep} \ldef \ep^{-1} \int_\bbR f(s/\ep) (\vp \circ\tau_{s,0})\, \md s$ and observe that $\vp_{\ep} \in \cH^1_b$.  Finally, by the strong continuity of $T_t$ in $L^p(\Om, \prob)$ (see Remark \ref{rem:time_shifts}) it follows that $\vp_{\ep} \to \vp$ in $L^p(\Om, \prob)$.

  For the second part of the statement, by \eqref{eq:D0_antisymm}, \eqref{eq:comm_grad} and \eqref{eq:adj_Dx} we get for  $\xi \in \cH^1_b$,
  \begin{align}\label{eq:relationderivative}
    -\langle \mD_0 \psi^{\be,k},  \mD_x \xi \rangle_{L^2(\Om, \prob)}
    &\;=\;
    \langle \psi^{\be,k},  \mD_x  \mD_0 \xi \rangle_{L^2(\Om, \prob)}
    \;=\;
    \langle \mD_{-x}\psi^{\be,k}, \mD_0 \xi \rangle_{L^2(\Om, \prob)}
    \nonumber \\[.5ex]
    &\;=\;
    \big\langle
      \om_0(-x,0) \mD_{-x} \psi^{\be,k} , \om_0(-x,0)^{-1}\mD_0 \xi
    \big\rangle_{L^2(\Om, \prob)}
    \nonumber\\[.5ex]
    &\;=\;
    \langle \mD \psi^{\be,k}, \Xi^x \rangle_{L^2(\Om \times \cN, m)},
  \end{align}
  where $\Xi^x\!: \Om \times \cN \to \bbR$ is defined by
  \begin{align*}
    \Xi^{x}(\om, y)
    \;\ldef\;
    \indicator_{-x}(y)\,
    \frac{\mD_0 \xi(\om)}{\om_0(-x,0)}.
  \end{align*}
  Observe that, since $\xi \in \cH^1_b$, $\mD_x \xi \in \cH^1$ and $\Xi^x \in L^2(\Om \times \cN, m)$, since by Assumption \ref{ass:P}~(i) $\mean[\om_0(0,x)^{-1}] < \infty$.  Using the weak convergence along a subsequence as $\be \downarrow0$ in \eqref{eq:relationderivative} we finally get
  \begin{align*}
    F^k(\mD_x \xi)
    &\;=\;
    \langle \psi^k, \Xi^x \rangle_{L^2(\Om \times \cN, m)}
    \;=\;
    \langle \psi^k(\cdot,-x), \mD_0 \xi \rangle_{L^2(\Om, \prob)},
  \end{align*}
  which is the claim.
\end{proof}
\begin{proof}[Proof of Theorem~\ref{thm:harm_coord} (i) and (ii).]
  In view of \eqref{eq:abstracteq} and \eqref{eq:timederivative}, we obtain for any $\xi \in\cH^1_b$ and $x \in \cN$
  \begin{align*}
    2\, \langle \psi^k(\cdot,x), \mD_0 \xi \rangle_{L^2(\Om, \prob)}
    \,+\,
    \langle \psi^k, \mD \mD_{-x} \xi \rangle_{L^2(\Om \times \cN, m)}
    \;=\;
    B^k(\mD_{-x} \xi),
  \end{align*}
  which can be rewritten as
  \begin{align}\label{eq:tothesolution}
    2\, \langle \psi^k(\cdot, x), \mD_0 \xi \rangle_{L^2(\Om, \prob)}
    \,+\,
    \langle \psi^k - \Pi^k, \mD \mD_{-x} \xi \rangle_{L^2(\Om \times \cN, m)}
    \;=\;
    0.
  \end{align}
  Since $\psi^k \in L^2_{\mathrm{pot}}$, there exists an unique extension $\Psi^k\!: \Om \times \bbZ^d \to \bbR$ that is defined by the formula \eqref{eq:def:extension}.  Moreover, we define
  \begin{align}
    \Phi_0^k\!: \Om \times \bbZ^d \to \bbR,
    \qquad
    \Phi_0^k(\om, x) \;\ldef\; x^k - \Psi^k(\om, x).
  \end{align}
  Obviously, $\Phi_0^k \in L^2_{\mathrm{cov}}$ by construction.  Thus, by the cocycle property (in space)
  \begin{align*}
    \nabla_y \Phi_0^k(\tau_{t,0\,} \om, x)
    \;\ldef\;
    \Phi_0^k(\tau_{t,0\,} \om, x+y) - \Phi_0^k(\tau_{t,0\,} \om, x)
    \;=\;
    y^k - \psi^k(\tau_{t,x\,}\om, y),
  \end{align*}
  for all $t \in \bbR$, $x \in \bbZ^d$ and $y \in \cN$.  Using $\langle \Pi^k, \mD_0 \xi \rangle_{L^2(\Om, \prob)} = 0$, we rewrite \eqref{eq:tothesolution} as
  \begin{align} \label{eq:tothesolution2}
    2\, \langle \nabla_{x} \Phi_0^k(\cdot, 0),  \mD_0 \xi \rangle_{L^2(\Om, \prob)}
    \,+\,
    \langle
      \Phi_0^k,  \mD \mD_{-x} \xi
    \rangle_{L^2(\Om \times \cN, m)}
    \;=\;
    0.
  \end{align}
  Notice that $\xi \circ \tau_{-t,-z} \in \cH^1_b$ for all $\xi \in \cH^1_b$ and $z \in \bbZ^d$, $t \in \bbR$.  Thus, we can replace $\xi$ by $\xi \circ \tau_{-t,-z}$ in \eqref{eq:tothesolution2}, integrate with respect to $t$ against a function $\ze \in C^1(\bbR)$ with compact support and use \eqref{eq:ibpf_Dx} and \eqref{eq:adj_Dx} to obtain
  \begin{align*}
    \int_{\bbR}\!
      \ze(t)
      \Big(
        \langle
          \nabla_{x} \Phi_0^k(\cdot, 0), \mD_0 (\xi \circ \tau_{-t,-z})
        \rangle_{L^2(\Om, \prob)}
        -
        \langle
           \mD_x (\cL_0^{\om} \Phi_0^k)(0), \xi \circ \tau_{-t,-z}
        \rangle_{L^2(\Om, \prob)}
      \Big)
    \md t
    =
    0.
  \end{align*}
  Further, by applying \eqref{eq:ibpf_D0}, Fubini's theorem and the shift invariance of $\prob$,
  \begin{align*}
    \mean\biggl[
      \xi
      \int_{\bbR}
        \Big(
          \ze'(t)\, \nabla_{x} \Phi_0^k(\cdot, 0) \circ \tau_{t,z}
          \,-\,
          \mD_{x} \big( (\cL^\om_0 \Phi_0^k)(0) \circ \tau_{t,z} \big)\,
          \ze(t)
        \Big)\,
      \md t
    \biggr]
    \;=\;
    0.
  \end{align*}
  Since $\mD_x (\cL_0^{\om} \Phi_0^k)(0) \circ \tau_{t,z} = \nabla_x (\cL_t^{\om} \Phi_0^k(\tau_{t,0\,} \om, \cdot))(z)$ and $\cH^1_b$ is dense in $L^p(\Om, \prob)$ for all $p \geq 1$, the equation above implies that
  \begin{align*}
    \nabla_{x}
    \biggl(
      \int_{\bbR}
        \ze'(t)\, \Phi_0^k(\tau_{t,0\,}\om, z) \,-\,
        (\cL^\om_t \Phi_0^k(\tau_{t,0\,}\om, \cdot))(z)\, \ze(t)\,
      \md t
    \biggr)
    \;=\;
    0
  \end{align*}
  for all $y, z\in \bbZ^d$ and  all $\ze \in C_0^{\infty}(\bbR)$, $\prob$-a.s.  In particular, the term in brackets is constant in $z$ and since $\Phi_0(\om,t,0)=0$ we get that
  \begin{align}\label{eq:presol}
    \int_{\bbR}
      -\ze'(t)\, \Phi^k_0(\tau_{t,0\,}\om, z) \,+\,
      (\cL^{\om}_t \Phi_0^k(\tau_{t,0\,}\om, \cdot))(z)\, \ze(t)\,
    \md t
    \;=\;
    \int_{\bbR} (\cL^\om_t \Phi_0^k(\tau_{t,0\,}\om, \cdot))(0)\, \ze(t)\, \md t.
  \end{align}
  From this equation it follows in particular that $t \mapsto \Phi_0^k(\tau_{t,0\,}\om, z)$ is weakly differentiable in time, hence by Sobolev's embedding it is also absolutely continuous in time for all $x \in \bbZ^d$, $\prob$-a.s.\ and differentiable for almost all $t \in \bbR$. In particular $\Phi_0^k(\tau_{t,0\,} \om,z) - \Phi_0^k(\om, z) = \int_0^t \partial_t \Phi_0^k(\tau_{s,0\,}\om, z)\, \md s$ and for almost all $t \in \bbR$, all $z \in \bbZ^d$
  \begin{align*}
    \partial_t \Phi^k_0(\tau_{t,0\,} \om, z)
    \,+\, (\cL^\om_t \Phi_0^k(\tau_{t,0\,}\om, \cdot))(z)
    \;=\;
    (\cL^\om_t \Phi_0^k(\tau_{t,0\,} \om, \cdot))(0).
  \end{align*}
  We define
  \begin{align*}
    \Phi^k(\om, t, z)
    \;\ldef\;
    \Phi_0^k(\tau_{t,0\,} \om, z)
    \,-\, \int_0^t (\cL^\om_s \Phi_0^k(\tau_{s,0\,} \om, \cdot))(0)\, \md s.
  \end{align*}
  Using \eqref{eq:presol} it is easy to see that $\Phi^k$ solves \eqref{eq:harm_coord}. We postpone the proof of (iii) to Proposition \ref{prop:sublin_corr} below.
\end{proof}

\section{Sublinearity of the corrector} \label{sec:corr_sublin}
The key ingredient in the proof of Theorem~\ref{thm:main} is the sublinearity of the corrector as stated in the following proposition, which we prove as the main result in this section.
\begin{prop} \label{prop:sublin_corr}
  Let $d \geq 2$ and suppose that Assumptions \ref{ass:P} and \ref{ass:moment} hold.  Then,
  \begin{align} \label{eq:sublin_corr}
    \lim_{n \to \infty} \max_{(t,x) \in Q(n)} \frac{ | \chi(\om, t, x)|} n
    \;=\;
    0,
    \qquad \prob\text{- a.s.}
  \end{align}
\end{prop}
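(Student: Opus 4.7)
The plan is to prove~\eqref{eq:sublin_corr} in two stages, in the spirit of the corresponding argument for the static RCM in~\cite{ADS15}: first establish a weak (space-time averaged) sublinearity, then upgrade it to a pointwise bound by means of a parabolic maximal inequality.

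\textbf{Step 1 (averaged sublinearity).} I would first aim to show
\begin{equation*}
  \lim_{n\to\infty}\;\frac{1}{n}\,\|\chi\|_{1,1,Q(n)}\;=\;0,\qquad \prob\text{-a.s.}
\end{equation*}
Using Corollary~\ref{corro:property:chi}(ii), decompose $\chi(\om,t,x)=\chi_0(\tau_{t,0}\om,x)+J(t,\om)$ with $J(t,\om)\ldef\int_0^t h(\tau_{s,0}\om)\,\md s$ and $h(\om)\ldef(\cL_0^{\om}\Phi_0(\om,\cdot))(0)$, and handle the two contributions separately. For the cocycle part, $\chi_0\in L^2_{\mathrm{cov}}$ with $\mean[\chi_0(\om,e)]=0$ for every $|e|=1$ by Corollary~\ref{corro:property:chi}(i), so that a standard ergodic theorem for $L^2$-cocycles on $\bbZ^d$ yields $|B(n)|^{-1}\sum_{x\in B(n)}|\chi_0(\om',x)|/n\to 0$ for $\prob$-a.e.\ $\om'$; combining this with the space-time ergodic theorem~\eqref{eq:ergodic:time-space} disposes of the $\chi_0\circ\tau_{\cdot,0}$ contribution.

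For the additive functional $J$, I would first observe that $\mean[h]=0$: plugging $\xi\equiv 1$ into~\eqref{eq:ibpf_Dx} (so that $\mD\xi=0$) forces $\mean\bigl[\sum_{y\sim 0}\om_0(0,y)\psi^k(\om,y)\bigr]=0$, and combining with the symmetry $\sum_{y\sim 0}y\,\mean[\om_0(0,y)]=0$ gives $\mean[h]=0$. A suitable refinement of the ergodic theorem (in the spirit of Kipnis--Varadhan/Krengel, exploiting that $h$ is inherited from the Lax--Milgram construction and not just an arbitrary $L^1$ function) then yields $|J(t,\om)|=o(\sqrt{t})$ $\prob$-a.s., which implies $n^{-3}\int_0^{n^2}|J(t,\om)|\,\md t\to 0$ and closes Step~1.

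\textbf{Step 2 (maximal inequality).} Since $\chi$ solves the time-inhomogeneous Poisson equation $\partial_t\chi+\cL^{\om}_t\chi=\cL^{\om}_t\Pi$ by~\eqref{eq:pois_chi}, I would apply the parabolic Moser-iteration maximal inequality developed in Section~\ref{sec:mos_it}, which takes the shape
\begin{equation*}
  \max_{(t,x)\in Q(n)}\frac{|\chi(\om,t,x)|}{n}
  \;\le\;
  C(\om)\,\bigg(\frac{1}{n}\,\|\chi\|_{1,1,Q(\si n)}\;+\;\frac{1}{n}\,\|\cL^{\om}_{\cdot}\Pi\|_{p,p',Q(\si n)}\bigg)
\end{equation*}
for some $\si>1$, with $C(\om)$ depending on $\om$ only through the quantities in~\eqref{eq:moment_condition} and hence bounded along subsequences by Assumption~\ref{ass:moment}. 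The first term on the right vanishes by Step~1, while the second is dominated by $C\,\|\mu^{\om}\|_{p,p',Q(\si n)}/n$, which tends to zero by the ergodic theorem. Combining the two steps gives~\eqref{eq:sublin_corr}.

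\textbf{Main obstacle.} The genuinely new difficulty compared with the static RCM is the time-additive part $J(t,\om)$: it is a stationary additive functional rather than a cocycle, and a naive application of the ergodic theorem gives only $J(t)=o(t)$, which translates into $\|J\|_{1,1,Q(n)}/n=O(1)$ and is therefore too weak. The finer bound $|J(t)|=o(\sqrt{t})$ must be established using the structural information about $h$ coming from the weak limit $F^k=\lim_{\be\downarrow 0}(-\mD_0\psi^{\be,k})\in\cH^{-1}$, together with a Kipnis--Varadhan-style representation that expresses $J$ (up to a small remainder) as a difference of stationary $L^2$ functions. A secondary, purely technical difficulty is that the parabolic Moser iteration supporting Step~2 has to be carried out simultaneously in space and time with the asymmetric exponents prescribed by~\eqref{eq:condpqprime}; this is dealt with in Section~\ref{sec:mos_it}.
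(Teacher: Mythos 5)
Your two-step plan (first $\|\chi\|_{1,1,Q(n)}/n\to 0$, then upgrade via the parabolic Moser maximal inequality) matches the paper's overall architecture, and your Step~2 is essentially Proposition~\ref{prop:maxin_lattice} in disguise. However, your Step~1 contains a genuine gap that you yourself flag but do not close, and it is precisely the point where the paper does something materially different.

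You decompose $\chi(\om,t,x)=\chi_0(\tau_{t,0}\om,x)+J(t,\om)$ with $J(t,\om)=\int_0^t h(\tau_{s,0}\om)\,\md s$, $h=(\cL_0^{\om}\Phi_0(\om,\cdot))(0)$, and then need $|J(t)|=o(\sqrt{t})$ so that $n^{-3}\int_0^{n^2}|J|\,\md t\to 0$. As you observe, $\mean[h]=0$ and the pointwise ergodic theorem give only $J(t)=o(t)$, which is off by exactly the parabolic scaling. Your proposal to bridge this with a ``Kipnis--Varadhan-style representation'' is not substantiated and is in fact unavailable in this setting: the whole reason the paper uses Lax--Milgram (rather than the Kipnis--Varadhan scheme) to build the corrector is that in the degenerate, non-elliptic case the antisymmetric part of $\partial_t+\cL_t^\om$ cannot be controlled and no sector condition is known. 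So the $H^{-1}$-type decomposition $J(t)=\text{martingale}+\text{coboundary}$ you would need is not at your disposal.

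The paper sidesteps the time-additive functional entirely. Instead of splitting $\chi$ as $\chi_0\circ\tau_{t,0}+J$, Lemma~\ref{lemma:sublinearity:chi:l1:averaged} controls $\chi-(\chi)_{Q(n)}$ by first forming the \emph{weighted spatial average} $(\chi(t,\cdot))_{f_n,B(n)}$ with a smooth cutoff $f_n(x)=f(x/n)$, $\int(\partial_y f)=0$. Differentiating this average in time and using $\partial_t\Phi+\cL_t^\om\Phi=0$ plus a summation by parts converts the dangerous time-integral of $h\circ\tau_{s,0}$ into a space-time Birkhoff average of $\vp_y(\tau_{s,x}\om)=\om_0(0,y)\Phi_0(\om,y)$ tested against the discrete gradient $(f_n(\cdot+y)-f_n)$; the ergodic limit then vanishes because $\int(\partial_y f)\,\md x=0$, not because of any refined rate for $J$. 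In short, the paper replaces the (unavailable) Kipnis--Varadhan bound on $J$ by a spatial cancellation coming from the zero-mean test function, together with a BFO-style telescoping over dyadic scales (proof of Proposition~\ref{prop:sublinearity:l1}) to remove the subtracted spatial-temporal average. Your Step~1 as written does not produce a proof; you would need to replace the direct attack on $J(t)$ by the weighted-average mechanism of Lemma~\ref{lemma:sublinearity:chi:l1:averaged}.

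One minor inaccuracy in Step~2: the right-hand side $\cL_t^\om\Pi=\nabla^*(\om_t\nabla f)$ with $|\nabla f|\le 1/n$ is not added as a separate term $\|\cL^\om\Pi\|_{p,p'}/n$ on the right of the maximal inequality; in Theorem~\ref{thm:max_ineq} the $1/n$-smallness of $\nabla f$ is absorbed into the Moser iteration itself, so the bound only involves $\|\mu^\om\|$, $\|\nu^\om\|$ and $\|\chi/n\|_{\al,\al,Q(2n)}^\ga$. This is a cosmetic difference, not a gap.
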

The proof is based on both ergodic theory and purely analytic tools. First we state the maximum inequality, which we establish in a more general context in Section~\ref{sec:mos_it} below, to bound from above the maximum of the rescaled corrector in $Q(n)$ in terms of its $\Norm{\cdot }{1,1,Q(n)}$-norm.
\begin{prop} \label{prop:maxin_lattice}
  Let $p, p', q, q' \in [1, \infty)$ be as in Assumption~\ref{ass:moment}.  Then, there exist $\ga \equiv \ga(d, p, p'\!, q, q') \in (0,1]$, $\ka \equiv \ka(d, p, p'\!, q, q') > 0$ and $c \equiv c(p, q, q', d) < \infty$ such that for any $j \in \{1, \ldots, d\}$,
  \begin{align*} 
    &\max_{(t,x) \in Q(n)}\! n^{-1} \big| \chi^j(\om, t,x) \big|
    \\
    &\mspace{36mu}\leq\;
    c\,
    \Big( \Norm{1\vee \mu^{\om}}{p, p',Q(2n)}
              \Norm{1 \vee \nu^{\om}}{q,q',Q(2n)}
    \Big)^{\!\ka}
    M_{\ga}\big(\Norm{\tfrac{1}{n} \chi^j(\om, \cdot)}{1, 1, Q(2 n)}\big),
  \end{align*}
  where $M_{\ga}(s) \ldef s^{\ga} \vee s$.
\end{prop}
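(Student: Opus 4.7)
The plan is to apply a space-time Moser iteration to the rescaled corrector $u(t,x) \ldef n^{-1}\chi^j(\om, n^2 t, n x)$, viewed as a solution of an inhomogeneous time-dependent Poisson equation. Indeed, \eqref{eq:pois_chi} says that $\chi^j$ satisfies $\partial_t \chi^j + \cL^\om_t \chi^j = \cL^\om_t \Pi^j$ on $Q(2n)$, and the right-hand side is pointwise dominated by $C\mu^\om$. Thus Proposition~\ref{prop:maxin_lattice} would be deduced as a concrete instance of a more general maximal inequality for solutions of $\partial_t u + \cL^\om_t u = f$ with $|f| \leq C \mu^\om$, to be proved in the generality of Section~\ref{sec:mos_it}.

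The first step is a Caccioppoli-type energy estimate. For $v \ldef (u-k)_+$ (with truncation level $k$ absorbing the forcing), multiplying the equation by $\eta^2 v^{2\alpha-1}$ for a space-time product cutoff $\eta(t,x) = \eta_1(t)\eta_2(x)$ and integrating by parts in both variables gives
\begin{align*}
\sup_t \sum_x \eta_2^2\, v^{2\alpha}
\,+\, \int \eta^2 \sum_{x \sim y} \om_s(x,y)\, \big(\nabla v^\alpha\big)^2 \,\md s
\;\leq\;
C \int \big(|\partial_t \eta^2| \,+\, \mu^\om\, |\nabla \eta|^2\big)\, v^{2\alpha}.
\end{align*}
The second step is a weighted space-time Sobolev inequality, obtained by combining the unweighted discrete Sobolev inequality of \cite{ADS15} with a H\"older argument against $\nu^\om$ in both time and space; this inequality upgrades the $\ell^2$-Dirichlet term on the left above to a $\Norm{\cdot}{2\rho, 2\rho', Q}$-norm for some $\rho, \rho' > 1$. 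One further H\"older step against $\mu^\om$ on the right-hand side of the energy estimate then produces the key reverse H\"older inequality
\begin{align*}
\Norm{v}{2\rho\alpha,\, 2\rho'\alpha,\, Q'}
\;\leq\; \big(C\,A(\om)\big)^{1/(2\alpha)}\, \Norm{v}{2\alpha,\, 2\alpha,\, Q}
\end{align*}
on shrunk concentric cubes $Q' \subset Q \subset Q(2n)$, where $A(\om) \ldef 1 \vee \Norm{\mu^\om}{p,p',Q}\Norm{\nu^\om}{q,q',Q}$. The condition \eqref{eq:condpqprime} is exactly what is needed for all these H\"older exponents to combine and produce a genuine gain $\rho\rho' > 1$.

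The last step is the standard Moser iteration: choose a decreasing sequence of concentric cubes $Q_0 = Q(2n) \supset Q_1 \supset \ldots$ with limit $Q(n)$, and iterate the reverse H\"older inequality with exponents $\alpha_k$ growing geometrically. Summing the resulting geometric series yields $\Norm{u}{\infty, Q(n)} \leq c\, A(\om)^\kappa\, \Norm{u}{\alpha,\alpha,Q(2n)}^\gamma$, which after unscaling is the claim. I expect the main obstacle to be performing the iteration \emph{jointly} in the spatial and temporal exponents: because the function-space norm involves the two indices $(p,p')$ and $(q,q')$ simultaneously, the H\"older combinatorics at each Moser step must be tracked more carefully than in the purely elliptic or purely spatial case, and one has to adapt the Kruzhkov--Kolodii alternation scheme \cite{KK77} to the discrete weighted setting of \cite{ADS15} in order to verify that the gain $\rho\rho' > 1$ persists across all steps.
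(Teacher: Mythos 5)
The high-level scheme you describe (energy estimate, weighted space-time Sobolev inequality, reverse H\"older, iteration \`a la Kruzhkov--Kolodii) is indeed what the paper does via Theorem~\ref{thm:max_ineq} and Proposition~\ref{prop:mos:iter}, but your treatment of the right-hand side of the corrector equation introduces a genuine gap. You propose to discard the divergence structure of $\cL^\om_t\Pi^j$ and retain only the pointwise bound $|\cL^\om_t\Pi^j|\leq\mu^\om$, so that after rescaling the forcing term is handled as a $g$ with $|g|\leq n^{-1}\mu^\om$. In the Caccioppoli step this gives a term of size
\begin{align*}
\frac{1}{|I|\,|B|}\int_I\sum_{x\in B}\eta^2\,|u|^{2\al-1}\,|g|\,\md t
\;\lesssim\;
\frac{1}{n}\,\Norm{\mu^\om}{p,p',Q}\,\Norm{|u|^{2\al-1}}{p_*,p_*',Q},
\end{align*}
which scales like $n^{-1}$. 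However, the Sobolev inequality \eqref{eq:sob:ineq:2} carries a factor $n^{2}$ in front of the Dirichlet form, so every term coming out of the energy estimate must scale like $n^{-2}$ for the iteration to close with $n$-independent constants; a term scaling like $n^{-1}$ blows up by a factor of $n$ and the reverse H\"older inequality you write down does not hold with the claimed bounded prefactor $A(\om)$.

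What makes the scaling work in the paper is precisely that the right-hand side is kept in divergence form $\nabla^* V^\om_t$ with $V^\om_t(e)=\om_t(e)\nabla f(e)$ and $|\nabla f|\leq 1/n$: integrating by parts \emph{once more} (see Lemma~\ref{lem:mos:DF}) transfers the derivative from $V^\om_t$ onto the test function $\eta^2\tilde u^{2\al-1}$, producing only terms proportional to $\norm{(\nabla\eta)(\nabla f)}{\infty}{E}\approx n^{-2}$ and $\norm{\nabla f}{\infty}{E}^2\approx n^{-2}$, both of the correct order. Your idea of absorbing the forcing via a truncation $(u-k)_+$ does not resolve this: the forcing is not bounded (it is $\mu^\om$), and even if it were, the $n^{-1}$ versus $n^{-2}$ discrepancy is a scaling deficiency that has nothing to do with the size of $k$. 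To make the plan work you need to formulate the maximal inequality for equations with divergence-form right-hand side $\nabla^*(\om_t\nabla f)$ and keep $|\nabla f|\leq 1/n$ as the smallness input, exactly as in Theorem~\ref{thm:max_ineq}.
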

We postpone the proof to Section~\ref{sec:mos_it}.  Proposition~\ref{prop:sublin_corr} is now immediate from Proposition~\ref{prop:maxin_lattice},  Assumption~\ref{ass:moment} and the following proposition.

\begin{prop}\label{prop:sublinearity:l1}
  Suppose $d \geq 2$ and Assumption~\ref{ass:P} holds.  Then for $\prob$-a.e.\ $\om$,
  \begin{align}\label{eq:l1:conv}
    \lim_{n\to \infty}\, \frac{1}{n^{3}}\,
    \int_0^{n^2}\!
      \frac{1}{|B(n)|} \sum_{x \in B(n)}\! \big|\chi(\om, t, x)\big|\,
    \md t
    \;=\;
    0.
  \end{align}
\end{prop}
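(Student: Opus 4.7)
My plan is to exploit the decomposition of the corrector from Corollary~\ref{corro:property:chi}(ii),
\begin{align*}
\chi(\om, t, x) \;=\; \chi_0(\tau_{t,0\,}\om, x) \,+\, g(\om, t),
\qquad
g(\om, t) \;\ldef\; \int_0^t \big(\cL_s^{\om}\Phi_0(\tau_{s,0\,}\om, \cdot)\big)(0)\, \md s,
\end{align*}
noting crucially that $g$ does not depend on the spatial variable~$x$. By the triangle inequality it will suffice to establish separately
\begin{align*}
\text{(I)}\;\; \frac{1}{n^3}\int_0^{n^2}\frac{1}{|B(n)|}\sum_{x \in B(n)}|\chi_0(\tau_{t,0\,}\om, x)|\, \md t \,\to\, 0,
\quad
\text{(II)}\;\; \frac{1}{n^3}\int_0^{n^2}|g(\om, t)|\, \md t \,\to\, 0,
\end{align*}
$\prob$-almost surely as $n \to \infty$.

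For (I), I will adapt the static argument of \cite{ADS15} to the space-time ergodic setting. Because $\chi_0 \in L^2_{\mathrm{cov}}$ satisfies the spatial cocycle property, $\chi_0^j(\tau_{t,0\,}\om, x)$ telescopes along any nearest-neighbour path from $0$ to $x$ as a sum of the stationary increments $\eta_{e_i}(\tau_{t, x_{i-1}}\om)$, where $\eta_e(\om) := \psi^j(\om, e) = \mD_e\chi_0^j(\om)$ is stationary under space-time shifts, has mean zero (by Lemma~\ref{lem:prop_coll}(vii) applied with $\xi \equiv 1$), and satisfies $\mean[\om_0(0, e)\,\eta_e^2] < \infty$ by the energy estimate of Lemma~\ref{lem:energyestimates}. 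Truncating $\eta_e$ at a level $K$, applying the space-time ergodic theorem~\eqref{eq:ergodic:time-space} to the bounded piece, and using the moment condition on $\nu^{\om}$ from Assumption~\ref{ass:moment} together with a Cauchy--Schwarz-type estimate to control the unbounded tail, then sending $K \to \infty$, gives (I).

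For (II), the central tool is an identity derived from the space-time harmonicity of $\Phi$: differentiating $\Phi(\om, t, x) = \Phi_0(\tau_{t,0\,}\om, x) - g(\om, t)$ in $t$, using $\partial_t\Phi + \cL_t^{\om}\Phi = 0$, and noting that $\cL_t^{\om}$ annihilates spatial constants shows
\begin{align*}
g(\om, t) \;=\; \Phi_0(\tau_{t,0\,}\om, y) - \Phi_0(\om, y) \,+\, \int_0^t \big(\cL_s^{\om}\Phi_0(\tau_{s,0\,}\om, \cdot)\big)(y)\, \md s
\end{align*}
for \emph{every} $y \in \bbZ^d$. Averaging this identity over $y \in B(n)$, using $\Phi_0^j(\om, y) = y^j - \chi_0^j(\om, y)$ together with the symmetry $\sum_{y \in B(n)}y^j = 0$, and exploiting the cocycle property to rewrite $(\cL_s^{\om}\Phi_0(\tau_{s,0\,}\om, \cdot))(y) = A(\tau_{s, y\,}\om)$ with $A(\om) \ldef \sum_{|e|=1}\om_0(0,e)\Phi_0(\om, e)$, expresses $g(\om, t)$ as two spatial $\chi_0$-averages (at times $t$ and $0$, controlled by (I) and by the static cocycle sublinearity, respectively) plus the time integral of the spatially averaged mean-zero field $\bar A_n(\tau_{s,0\,}\om) := |B(n)|^{-1}\sum_{y \in B(n)} A(\tau_{s, y\,}\om)$. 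That $\mean[A] = 0$ follows from Lemma~\ref{lem:prop_coll}(vii) with $\xi \equiv 1$ and the symmetry $\mean[\om_0(0,e)] = \mean[\om_0(0,-e)]$, so the third term is amenable to \eqref{eq:ergodic:time-space}.

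I expect the main obstacle to be step (II). Indeed, a direct time-ergodic argument on the mean-zero stationary process $s \mapsto (\cL_s^{\om}\Phi_0(\tau_{s,0\,}\om, \cdot))(0)$ yields only $g(\om, t) = o(t)$ almost surely, which upon integration over $[0, n^2]$ produces the useless bound $o(n)$ for the target quantity, not $o(1)$. The identity above is the essential workaround: by exploiting that $y$ is arbitrary, it turns the problematic one-dimensional temporal integral of a mean-zero process into a genuine space-time average, unlocking the full quantitative force of the joint space-time ergodic theorem~\eqref{eq:ergodic:time-space}.
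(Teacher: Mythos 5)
Your decomposition $\chi(\om,t,x)=\chi_0(\tau_{t,0}\om,x)+g(\om,t)$ is a natural starting point, and you correctly diagnose that a one-dimensional temporal ergodic argument for $g$ only yields $o(n)$. But the proposed workaround does not escape this obstacle. After averaging the identity $g(\om,t)=\Phi_0(\tau_{t,0}\om,y)-\Phi_0(\om,y)+\int_0^t A(\tau_{s,y}\om)\,\md s$ over $y\in B(n)$, the critical term is
\begin{align*}
\frac{1}{n^3}\int_0^{n^2}\Big|\int_0^t\frac{1}{|B(n)|}\sum_{y\in B(n)}A(\tau_{s,y}\om)\,\md s\Big|\,\md t.
\end{align*}
The space-time ergodic theorem \eqref{eq:ergodic:time-space} gives $\frac{1}{n^2|B(n)|}\int_0^{n^2}\sum_y A(\tau_{s,y}\om)\,\md s\to\mean[A]=0$, so the inner double integral is $o(n^2)$; inserting this bound produces $n\cdot o(1)$, not $o(1)$. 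Averaging over $y$ does not supply the extra factor $1/n$ that is needed, because the ergodic theorem comes with no rate. Alternatively, exploiting that $A(\tau_{s,\cdot}\om)$ is a discrete divergence and summing by parts over $B(n)$ with the \emph{uniform} weight leaves a boundary flux of size $O(n^{d-1})$, giving $O(1/n)$ per unit time and again only $O(1)$ after integration over $[0,n^2]$. This is exactly where the paper's proof is structurally different: it replaces the uniform spatial average with a smooth weight $f_n(x)=f(x/n)$, so that summing $\partial_s\chi(\om,s,\cdot)=-\cL_s^\om\Phi$ by parts moves the gradient onto $f_n$, yielding $f_n(x+y)-f_n(x)=O(1/n)$; combined with an extension of Birkhoff's theorem (\cite[Theorem~3]{BD03}) applied to the weighted averages $\sum_x(\partial_y f)(x/n)\,\vp_y(\tau_{s,x}\om)$, this produces precisely the decay you are missing.

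There are two further gaps. For (I), telescoping $\chi_0(\tau_{t,0}\om,\cdot)$ and invoking truncation plus the ergodic theorem controls only the deviation from the \emph{spatial mean} $(\chi_0(\tau_{t,0}\om,\cdot))_{B(n)}$ — this is exactly Lemma~\ref{lemma:sublinearity:l1:chi_0} — but says nothing about the spatial mean itself. The paper closes this gap with a dyadic Cauchy argument (following \cite{BFO16}) applied to the \emph{full} corrector $\chi$, controlling $(\chi)_{Q(n)}/n$ by summing small increments along dyadic scales; that step has no counterpart in your outline. Finally, your appeal to ``static cocycle sublinearity'' for $\frac{1}{n}\big|(\chi_0(\om,\cdot))_{B(n)}\big|$ would require ergodicity under space shifts alone, whereas Assumption~\ref{ass:P}(ii) only assumes ergodicity under the full group of space-time shifts. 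Pointwise-in-$t$ sublinearity of $\chi_0$ is a \emph{consequence} of the theorem, not an available ingredient — so as written the argument would be circular.
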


The rest of this section is devoted to the proof of Proposition~\ref{prop:sublinearity:l1}, which relies on the following two lemmas.  First we recall that the Euclidean lattice $(\bbZ^d, E_d)$ satisfies the classical (strong) $\ell^1$-Poincar\'e inequality
\begin{align}\label{eq:Poincare:l1}
  \sum_{x \in B(n)} \big| u(x) - (u)_{B(n)} \big|
  \;\leq\;
  C_{\mathrm{P}}\, n
  \mspace{-6mu}\sum_{\substack{x, y \in B(n) \\ x \sim y}}\mspace{-10mu}
  \big| u(x) - u(y) \big|
\end{align}
for any function $u\!: \bbZ^d \to \bbR$, where $(u)_{B(n)} \ldef |B(n)|^{-1} \sum_{x \in B(n)} u(x)$; see, for example, \cite[Lemma~3.3.3]{S-C97}.
\begin{lemma}\label{lemma:sublinearity:l1:chi_0}
  Let $d \geq 2$.  Then, for every $j = 1, \ldots, n$ and $\prob$-a.e.\ $\om$,
  \begin{align}\label{eq:sublinearity:l1:chi_0}
    \lim_{n \to \infty} \frac{1}{n^3}\,
    \int_0^{n^2}\!
      \frac{1}{|B(n)|} \sum_{x \in B(n)}
      \Big|
        \chi_{0}^j(\tau_{t, 0\,} \om, x)
        \,-\,
        \big(\chi_{0}^{j}(\tau_{t, 0\,} \om, \cdot) \big)_{B(n)}
      \Big|\,
    \md t
    \;=\;
    0.
  \end{align}
\end{lemma}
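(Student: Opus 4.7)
The plan is to apply the classical $\ell^1$-Poincar\'e inequality \eqref{eq:Poincare:l1} on $B(n)$ at each fixed time $t$, converting the spatial average of oscillations of $\chi_0^j(\tau_{t,0}\om, \cdot)$ into a prefactor $n$ times an average of nearest-neighbour absolute increments, and then invoke the space-time ergodic theorem \eqref{eq:ergodic:time-space}. The immediate obstacle is an $O(n)$-vs-$o(n)$ gap: Birkhoff applied directly to the increments $|\chi_0^j(\tau_{t,x}\om, e)|$ only yields a constant limit $\mean[|\chi_0^j(\om, e)|]$, which is generically positive. Hence a naive application of Poincar\'e alone delivers an $O(1)$ upper bound on the quantity of interest, whereas we need $o(1)$.

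To close this gap I plan to exploit that, when restricted to $\cN$, the cocycle $\chi_0^j$ coincides with $\psi^j \in L^2_{\mathrm{pot}}$, which by definition is the $L^2(\Om \times \cN, m)$-closure of gradients of bounded functions. Fix $\ep > 0$, choose $\phi \in L^\infty(\Om, \prob)$ with $\|\psi^j - \mD\phi\|_{L^2(\Om \times \cN, m)} < \ep$, and introduce the bounded spatial cocycle $\hat\chi(\om, x) \ldef \phi(\tau_{0,x}\om) - \phi(\om)$, satisfying $\|\hat\chi\|_\infty \leq 2\|\phi\|_\infty$. Writing $\chi_0^j = \hat\chi + (\chi_0^j - \hat\chi)$ and applying the triangle inequality inside the absolute value in \eqref{eq:sublinearity:l1:chi_0} splits the sum into two pieces; the nearest-neighbour increments of the second, residual cocycle are precisely $\psi^j(\om, e) - \mD_e\phi(\om)$.

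The $\hat\chi$-piece is handled by the trivial uniform bound $|\hat\chi(\tau_{t,0}\om, x) - (\hat\chi(\tau_{t,0}\om, \cdot))_{B(n)}| \leq 4\|\phi\|_\infty$, which after normalising by $n^3 |B(n)|$ and integrating over $[0,n^2]$ yields an $O(1/n)$ contribution that vanishes as $n \to \infty$. For the residual piece I apply Poincar\'e \eqref{eq:Poincare:l1} followed by the ergodic theorem \eqref{eq:ergodic:time-space} applied to $\vp_e(\om) \ldef |\psi^j(\om, e) - \mD_e\phi(\om)|$, which lies in $L^1(\Om, \prob)$ since $\psi^j(\om, e) = \chi_0^j(\om, e) \in L^1(\prob)$ by Corollary~\ref{corro:property:chi}(i) and $\mD_e\phi$ is bounded. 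Passing from the weighted $L^2(m)$-bound to the unweighted $L^1$-bound on $\vp_e$ by Cauchy-Schwarz, using Assumption~\ref{ass:P}(i) (i.e.\ $\mean[\om_0(0,e)^{-1}] < \infty$), the residual piece is $\prob$-a.s.\ bounded by
\begin{align*}
  c \sum_{e \in \cN} \mean\!\big[|\psi^j(\om, e) - \mD_e\phi(\om)|\big]
  \;\leq\;
  c'\, \|\psi^j - \mD\phi\|_{L^2(\Om \times \cN, m)}
  \;<\;
  c'\, \ep,
\end{align*}
and sending $\ep \downarrow 0$ concludes the proof.

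The main obstacle is precisely the mismatch between the $O(n)$-prefactor produced by Poincar\'e and the $o(n)$-decay we want; the key trick is the density of bounded gradients in $L^2_{\mathrm{pot}}$, which simultaneously produces a bounded main piece (with trivially vanishing rescaled oscillation) and a residual piece whose nearest-neighbour increments have arbitrarily small $L^1$-norm, thereby rendering the Poincar\'e-plus-Birkhoff bound arbitrarily small.
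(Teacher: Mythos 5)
Your proposal is correct and follows essentially the same route as the paper's proof: approximate $\psi^j\in L^2_{\mathrm{pot}}$ by a gradient $\mD\phi$ of a bounded function, treat the bounded cocycle piece by its trivial sup-norm bound (contributing $O(1/n)$), and handle the residual piece by the $\ell^1$-Poincar\'e inequality \eqref{eq:Poincare:l1} together with the space-time ergodic theorem \eqref{eq:ergodic:time-space} and a Cauchy--Schwarz passage from the $m$-weighted $L^2$ norm to the unweighted $L^1$ bound (using $\mean[\om_0(e)^{-1}]<\infty$). The paper phrases this via an approximating sequence $\vp_k^j$ with $\mD\vp_k^j\to\psi^j$ rather than a fixed $\ep$-close $\phi$, but the decomposition, the use of density of bounded gradients, and the concluding limit are identical.
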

\begin{proof}
  Since $\chi_0^j$ is the unique extension of a function $\psi^j \in L^2_{\mathrm{pot}}$, there exists for any $j = 1, \ldots, d$ a sequence of bounded functions $\psi_k^j\!: \Om \to \bbR$ such that $\mD \psi_k^{j} \to \psi^{j}$ in $L^2(\Om \times \cN, m)$ as $k \to \infty$.   Notice that $\chi_0^j(\om, x) = \psi^j(\om, x)$ for all $x \in \cN$ and $\prob$-a.e.\ $\om \in \Om$.  Then, by applying the $\ell^1$-Poincar\'e inequality
  \begin{align*}
    &\frac{1}{n^3}\,
    \int_0^{n^2}\mspace{-12mu}
      \frac{1}{|B(n)|} \sum_{x \in B(n)}\mspace{-3mu}
      \Big|
        \chi_{0}^j(\tau_{t, 0\,} \om, x)
        -
        \big(\chi_{0}^{j}(\tau_{t, 0\,} \om, \cdot) \big)_{B(n)}
      \Big|\,
    \md t
    \\[.5ex]
    &\mspace{36mu}\leq\;
    \frac{1}{n^3}
    \int_0^{n^2}\mspace{-12mu}
    \frac{1}{|B(n)|}
    \sum_{x \in B(n)}\mspace{-3mu}
      \Big|
        (\chi_{0}^j - D\psi_k^j)(\tau_{t, 0\,} \om, x)
        -
        \big( (\chi_{0}^{j} - D\psi_k^j)(\tau_{t, 0\,} \om, \cdot) \big)_{B(n)} 
      \Big|\,
    \md t
    \\
    &\mspace{72mu}+\,
    \frac{4}{n}\, \big\|\psi_k^j\big\|_{L^{\infty}(\Om,\prob)}
    \\[.5ex]
    &\mspace{32mu}\overset{\eqref{eq:Poincare:l1}}{\;\leq\;}
    \frac{C_{\mathrm{P}}}{n^2}\,
    \int_0^{n^2}\mspace{-12mu}
      \frac{1}{|B(n)|} \sum_{\substack{x \in B(n) \\ y \sim 0}}\mspace{-3mu}
      \Big|
        \big(\chi_{0}^j - \mD \psi_k^j\big)(\tau_{t, x\,} \om, y)
      \Big|\,
    \md t
    \,+\,
    \frac{4}{n}\, \big\|\psi_k^j\big\|_{L^{\infty}(\Om,\prob)},
  \end{align*}
  where we used in the second step the cocycle property.  Thus, by the pointwise ergodic theorem \eqref{eq:ergodic:time-space} it follows that for $\prob$-a.e.\ $\om$,
  \begin{align*}
    &\limsup_{n \to \infty} \frac{1}{n^3}\,
    \int_0^{n^2}\mspace{-10mu}
      \frac{1}{|B(n)|} \sum_{x \in B(n)}
      \Big|
        \chi_{0}^j(\tau_{t, 0\,} \om, x)
        \,-\,
        \big(\chi_{0}^{j}(\tau_{t, 0\,} \om, \cdot) \big)_{B(n)}
      \Big|\,
    \md t
    \\[.5ex]
    &\mspace{36mu}\leq\;
    C_{\mathrm{P}}\,
    \mean\!\bigg[
      {\textstyle \sum_{y \sim 0}}
      \Big|
        \big(\chi_0^j - \mD \psi_k^j\big)(\om, y)
      \Big|
    \bigg]
    \;\leq\;
    c\, \mean\!\big[\nu_0^{\om}(0)\big]^{1/2}\,
    \big\|\psi^j - \mD \psi_k^j\big\|_{L^2(\Om \times \cN, m)}.
  \end{align*}
  Since, by construction, $\psi^j - \mD \psi_k^j \to 0$ in $L^2(\Om \times \cN, m)$ as $k \to \infty$, the assertion \eqref{eq:sublinearity:l1:chi_0} follows.
\end{proof}
\begin{lemma}\label{lemma:sublinearity:chi:l1:averaged}
  For every $j = 1, \ldots, n$ and $\prob$-a.e.\ $\om$ we have that
  \begin{align}\label{eq:sublinearity:chi:l1:averaged}
    \lim_{n \to \infty} \frac{1}{n^3}\,
    \int_0^{n^2}
    \mspace{-6mu}
    \frac{1}{|B(n)|} \sum_{x \in B(n)}
      \Big|
        \chi^j(\om, t, x)
        \,-\,
        \big(\chi^j(\om, \cdot, \cdot) \big)_{Q(n)}
      \Big|\,
    \md t
    \;=\;
    0,
  \end{align}
  where $(\chi^j)_{Q(n)}$ denotes the time-space average of the function $\chi^j$ over the time-space cylinder $Q(n) = [0, n^2] \times B(n)$.
\end{lemma}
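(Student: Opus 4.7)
The idea is to split the difference via the triangle inequality around the spatial mean at time $t$:
\[
|\chi^j(\om,t,x) - (\chi^j)_{Q(n)}|
\;\leq\;
|\chi^j(\om,t,x) - (\chi^j(\om,t,\cdot))_{B(n)}|
\,+\,
|(\chi^j(\om,t,\cdot))_{B(n)} - (\chi^j)_{Q(n)}|,
\]
and then to treat the two pieces separately.

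For the first (spatial) piece, the decomposition \eqref{eq:corr_decomp} writes $\chi^j(\om,t,x) = \chi_0^j(\tau_{t,0}\om, x) + a^j(t)$ with $a^j(t) := \int_0^t (\cL_s^\om \Phi_0^j(\tau_{s,0}\om,\cdot))(0)\,\md s$ independent of $x$. Subtracting the $B(n)$-mean at the fixed time $t$ cancels $a^j(t)$, so the first piece equals $|\chi_0^j(\tau_{t,0}\om, x) - (\chi_0^j(\tau_{t,0}\om,\cdot))_{B(n)}|$. Its $n^{-3}$-rescaled space-time $L^1$-average therefore tends to $0$ by Lemma~\ref{lemma:sublinearity:l1:chi_0}, which is exactly the right hypothesis.

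For the second (purely temporal) piece, set $u_n(t) := (\chi^j(\om,t,\cdot))_{B(n)} = h_n(\tau_{t,0}\om) + a^j(t)$ with $h_n(\om) := (\chi_0^j(\om,\cdot))_{B(n)} \in L^1(\prob)$; since the integrand no longer depends on $x$, the $|B(n)|^{-1}\sum_x$ is trivial and the task reduces to showing $n^{-3}\int_0^{n^2}|u_n(t) - \bar u_n|\,\md t \to 0$, where $\bar u_n = (\chi^j)_{Q(n)}$. The proof combines three ingredients: (a)~the identity $\mean[g^j]=0$ for $g^j(\om) := (\cL_0^\om\Phi_0^j(\om,\cdot))(0)$, which follows from the fact that the constant function $1$ lies in $L^2_{\mathrm{sol}}$ (a consequence of the invariance of $\prob$ under $y \leftrightarrow -y$), combined with $\chi_0^j|_{\mathcal N} \in L^2_{\mathrm{pot}} \perp L^2_{\mathrm{sol}}$; Birkhoff's theorem applied to $g^j$ then yields $a^j(t)/t \to 0$ almost surely; (b)~the static-case approximation of $\chi_0^j|_{\mathcal N} \in L^2_{\mathrm{pot}}$ by coboundaries $\mD\phi_k$ with $\phi_k \in L^\infty(\prob)$ whose cocycle extensions are the bounded functions $\phi_k(\tau_{0,x}\om)-\phi_k(\om)$, yielding $|h_n(\om)|/n \to 0$ a.s.; and (c)~the joint space-time ergodic theorem \eqref{eq:ergodic:time-space} to convert these pointwise statements into the required time-averaged bound.

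The main obstacle is that the individual rates $a^j(t) = o(t)$ and $h_n = o(n)$ are, by themselves, too weak to close under the $n^{-3}$ normalisation. A direct time-Poincar\'e estimate uses that after the cancellation of $g^j$ between $\mD_0 h_n$ and $(a^j)'$, the derivative $u_n'(t)$ reduces via Green's identity to the boundary sum
\[
u_n'(t) \;=\; \frac{1}{|B(n)|}\sum_{\substack{x \in B(n),\, y \notin B(n) \\ x \sim y}} \om_t(x,y)\bigl(\Phi_0^j(\tau_{t,0}\om,y) - \Phi_0^j(\tau_{t,0}\om,x)\bigr);
\]
the natural ergodic bound $\int_0^{n^2}|u_n'(t)|\,\md t = O(n)$ feeds into time-Poincar\'e to give only $O(1)$ for the $n^{-3}$-rescaled deviation. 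To break this and reach $o(1)$, one exploits the mean-zero property of the boundary integrand together with the joint ergodic theorem of Assumption~\ref{ass:P}(ii), absorbing the boundary contribution back into the control already provided by Lemma~\ref{lemma:sublinearity:l1:chi_0}.
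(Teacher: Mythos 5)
Your decomposition starts the same way as the paper's: split around a spatial mean at fixed time $t$, and control the first (purely spatial) piece by Lemma~\ref{lemma:sublinearity:l1:chi_0} after using \eqref{eq:corr_decomp} to reduce to $\chi_0^j$. That part is correct. The computation of $u_n'(t)$ via Green's identity and the observation that the ``natural'' bound stalls at $O(1)$ after time--Poincar\'e are also correct. The gap is in the final step where you claim to beat $O(1)$.

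There are two problems with the proposed fix. First, the individual boundary increments $\vp_e(\tau_{t,x}\om)$ with $\vp_e(\om)=\om_0(0,e)\Phi_0^j(\om,e)$ are \emph{not} mean-zero; only the sum $\sum_{e\in\cN}\mean[\vp_e]=\mean[g^j]=0$ vanishes, whereas the boundary of the $\ell^1$-ball selects direction-dependent subsets of outward edges, so there is no direct pointwise cancellation. Second, and more fundamentally, even granting some cancellation, the boundary sum lives on a set of cardinality $O(n^{d-1})$; integrating over $[0,n^2]$ gives an averaging region of size $O(n^{d+1})$. The space-time ergodic theorem \eqref{eq:ergodic:time-space} controls averages over the full $(d+2)$-dimensional cylinder of volume $n^{d+2}$ and says nothing about lower-dimensional (boundary) averages. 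The final phrase about ``absorbing the boundary contribution back into the control already provided by Lemma~\ref{lemma:sublinearity:l1:chi_0}'' is not an argument: that lemma bounds a bulk $L^1$-oscillation of $\chi_0^j$, not a time-integral of a boundary flux. As it stands the step from $O(1)$ to $o(1)$ is missing.

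The paper's way around this is precisely to avoid producing a boundary term at all. It replaces the plain spatial mean by the weighted mean $(\cdot)_{f_n,B(n)}$ where $f_n(x)=f(x/n)$ and $f$ is smooth, supported strictly inside the unit $\ell^1$-ball. The summation by parts then yields a \emph{bulk} sum over all of $\bbZ^d$ with discrete gradients $f_n(x+y)-f_n(x)=O(1/n)$; after a Taylor expansion this becomes $\frac{1}{n}(\partial_y f)(x/n)$, and the decisive cancellation comes from $\int_{\bbR^d}(\partial_y f)\,\md x=0$ together with the extension of Birkhoff's theorem in \cite{BD03} to weighted averages (Step 1 of the paper's proof). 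Note in particular that this argument does not need $\mean[\vp_y]=0$; the cancellation is carried entirely by the weight $f_n$, which is what makes the proof robust. Introducing such a weight is not a cosmetic variant of your plain average --- it is the idea that closes the estimate.
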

\begin{proof}
  Consider the function $f\!:\bbR^d \to \bbR$, $x \mapsto \prod_{i=1}^d \hat{f}_i(x_i)$, where $\hat{f}_i \in C_0^{\infty}((-\frac{1}{d}, \frac{1}{d}))$ with $0 \leq \hat{f}_i \leq 1$ and set $f_n(x) \ldef f(x/n)$.  Since $\supp f \in (-\frac{1}{d}, \frac{1}{d})^d$, we have that $\supp f_n \subset B(n)$ for all $n \geq 1$ and $\int_{\mathbb{R}^d} (\partial_y f)(x)\, \md x = 0$ for all $\bbZ^d$ with $|y|=1$, where we denote by $\partial_y f$ the directional derivative of $f$.

  We now address the proof of \eqref{eq:sublinearity:chi:l1:averaged} that comprises two steps.

  \noindent
  \textsc{Step 1:}  Fix some $y \in \bbZ^d$ with $|y|=1$.  Then, for any $\vp \in L^1(\Om, \prob)$ an extension of Birkhoff's theorem, cf.\ \cite[Theorem~3]{BD03}, yields for every $t \in (0,1]$,
  \begin{align*}
    F_n^{\om}(t)
    \;\ldef\;
    \frac{1}{n^{d+2}}\!
    \int_0^{t n^2}\mspace{-8mu}
      \sum_{x \in \bbZ^d} (\partial_y f)(x/n)\, \vp(\tau_{s,x\,} \om)\,
    \md s
    \underset{n \to \infty}{\;\longrightarrow\;}
    t\, \bigg( \int_{\bbR^d} (\partial_y f)(x)\, \md x\bigg) \mean[\vp]
    \;=\;
    0
  \end{align*}
  for $\prob$-a.e.\ $\om$.  In particular, there exists a set $\Om_0 \subset \Om$ having full $\prob$-measure such that for all $\om \in \Om_0$ it holds that $F_n^{\om}(t) \to 0$ as $n \to \infty$ for almost all $t \in [0, 1]$.  Indeed, for
  \begin{align*}
    N
    \;\ldef\;
    \big\{
      (t, \om) \in [0,1] \times \Om \,:\,
      F_n^{\om}(t) \nrightarrow 0 \text{ as } n \to \infty
    \big\}
  \end{align*}
  set $N_t \ldef \{\om \in \Om \,:\, (t, \om) \in N\}$ and $N_{\om} \ldef \{t \in [0,1] \,:\, (t, \om) \in N\}$.  Since $N$ is measurable and $\prob[N_t] = 0$ for every $t \in (0, 1]$, Fubini's theorem implies that $(\mathop{\mathrm{Leb}} \otimes \prob)[N] = 0$.  In particular, for $\prob$-almost all $\om$ it holds that $F_n^{\om}(t) \to 0$ as $n \to \infty$ for a.e.\ $t \in [0,1]$.  Moreover, since for all $t \in [0,1]$,
  \begin{align*}
    F_n^{\om}(t)
    \;\leq\;
    \sup_{n > 0} \frac{1}{n^{d+2}}
    \int_0^{n^2}\mspace{-6mu}
      \sum_{x \in \bbZ^d} \big|(\partial_y f)(x/n)\big|\,
      \big|\vp(\tau_{s,x\,} \om)\big|\,
    \md s
    \;<\;
    \infty
    \qquad \prob\text{-a.s.},
  \end{align*}
  we conclude, by Lebesgue's dominated convergence theorem, that for $\prob$-a.e.\ $\om$,
  \begin{align}\label{eq:sublinearity:l1:step1}
    \lim_{n \to \infty}
    \int_0^1
      \bigg|
        \frac{1}{n^{d+2}}
        \int_0^{t n^2}\mspace{-6mu}
          \sum_{x \in \bbZ^d} (\partial_y f)(x/n)\, \vp(\tau_{s,x\,} \om)\,
        \md s
      \bigg|\,
    \md t
    \;=\;
    0.
  \end{align}
  \textsc{Step 2:} Denote by $( u(t, \cdot) )_{f_n, B(n)}$ the weighted average of a function $u\!: \bbR \times \bbZ^d \to \bbR$,
  \begin{align*}
    \big( u(t, \cdot) \big)_{f_n, B(n)}
    \;\ldef\;
    \frac{c_n}{|B(n)|}
    \sum_{x \in B(n)} \mspace{-3mu} f_n(x)\, u(t,x),
  \end{align*}
  where $c_n \ldef |B(n)| / \big(\sum_{x \in B(n)} f_n(x)\big)$. Set $(u)_{f_n, Q(n)} \ldef \frac{1}{n^2} \int_0^{n^2} \big((u(t, \cdot) \big)_{f_n, B(n)}\, \md t$.  Then, we obtain that
  \begin{align}\label{eq:sublinearity:l1:split}
    &\frac{1}{n^3}
    \int_0^{n^2}\mspace{-6mu}
      \frac{1}{|B(n)|} \sum_{x \in B(n)}
      \Big|
        \chi^j(\om, t, x)
        \,-\,
        \big(\chi^j(\om, \cdot, \cdot) \big)_{Q(n)}
      \Big|\,
    \md t
    \nonumber\\[1ex]
    &\mspace{36mu}\leq\;
    \frac{2}{n^3}
    \int_0^{n^2}\mspace{-6mu}
      \frac{1}{|B(n)|} \sum_{x \in B(n)}
      \Big|
        \chi^j(\om, t, x)
        \,-\,
        \big(\chi^j(\om, \cdot, \cdot) \big)_{f_n,Q(n)}
      \Big|\,
    \md t
    \nonumber\\[1ex]
    &\mspace{36mu}\leq\;
    \frac{2}{n^3}
    \int_0^{n^2}\mspace{-6mu}
      \frac{1}{|B(n)|} \sum_{x \in B(n)}
      \Big|
        \chi^j(\om, t, x)
        \,-\,
        \big(\chi^j(\om, t, \cdot) \big)_{f_n, B(n)}
      \Big|\,
    \md t
    \nonumber\\[1ex]
    &\mspace{72mu}+\,
    \frac{2}{n^3}
    \int_0^{n^2}
      \Big|
        \big(\chi^j(\om, t, \cdot)\big)_{f_n, B(n)}
        \,-\,
        \big(\chi^j(\om, \cdot, \cdot)\big)_{f_n, Q(n)}
      \Big|\,
    \md t
    \nonumber\\[1ex]
    &\mspace{36mu}=\;
    I_1(n) \,+\, I_2(n).
  \end{align}
  Since for any function $u\!:\mathbb{Z}^d \to \mathbb{R}$,
  \begin{align*}
    \sum_{x \in B(n)}\mspace{-6mu} \big|u(x) \,-\, (u)_{f_n, B(n)}\big|
    &\;\leq\;
    \sum_{x \in B(n)}\mspace{-6mu} \big|u(x) \,-\, (u)_{B(n)}\big|
    \,+\,  |B(n)|\, \big|(u)_{f_n,B(n)} \,-\, (u)_{B(n)}\big|
    \\[.5ex]
    &\;\leq\;
    (1+c_n)\, \sum_{x \in B(n)}\mspace{-6mu} \big|u(x) \,-\, (u)_{B(n)}\big|,
  \end{align*}
  where we used the triangular inequality and the fact that $0 \leq f_n \leq 1$, it follows that
  \begin{align*}
    I_1(n)
    \;\leq\;
    \frac{2(1+c_n)}{n^3}
    \int_0^{n^2}\mspace{-6mu}
      \frac{1}{|B(n)|} \sum_{x \in B(n)}
      \Big|
        \chi_0^j(\tau_{t,0\,}\om, x)
        \,-\,
        \big(\chi_0^j(\tau_{t,0\,} \om, \cdot) \big)_{B(n)}
      \Big|\,
    \md t.
  \end{align*}
  Hence, an application of Lemma~\ref{lemma:sublinearity:l1:chi_0} yields that $\lim_{n \to \infty} I_1(n) = 0$ for $\prob$-a.e.\ $\om$.  Recall that the cocycle property implies that $\Phi(\om, s, x+y) - \Phi(\om, s, x) = \Phi_0(\tau_{s,x\,}\om, y)$ for all $x \in \bbZ^d$ and $|y|=1$.  Hence, a summation by parts (cf.\ \eqref{eq:def:dform} below) gives
  \begin{align*}
    \big(\partial_s \chi(\om, s, \cdot) \big)_{f_n, B(n)}
    &\;=\;
    \big(\partial_s \Phi(\om, s, \cdot) \big)_{f_n, B(n)}
    \overset{\!\eqref{eq:harm_coord}\!}{\;=\;}
    \big( (- \cL_s^{\om} \Phi)(\om, s, \cdot) \big)_{f_n, B(n)}
    \\[1ex]
    &\;=\;
    \frac{c_n}{2 |B(n)|} \sum_{\substack{x \in \bbZ^d\\y \sim 0}} \mspace{-2mu}
    \big(f_n(x+y) - f_n(x)\big)\,
    \vp_y(\tau_{s,x\,} \om),
  \end{align*}
  where we write $\vp_y(\om) \ldef \om_0(0,y) \Phi_0(\om, y)$ for abbreviation.  This yields
  \begin{align*}
    I_2(n)
    &\;\leq\;
    \frac{2 c_n n^d}{|B(n)|}\,
    \sum_{y \sim 0}\,
    \int_0^1
      \bigg|
        \frac{1}{n^{d+1}}
        \int_0^{t n^2}
          \mspace{-6mu}
          \sum_{x \in \bbZ^d}
          \big(f_n(x+y) - f_n(x)\big)\, \vp_y(\tau_{s,x\,} \om)\,
        \md s
      \bigg|\,
    \md t.
  \end{align*}
  Since
  \begin{align*}
    \mean[|\vp_y(\om)|]
    \;=\;
    \mean[\om_0(0,y)\, |\Phi_0(\om, y)|]
    \;\leq\;
    \mean[\om_0(0,y)]^{1/2}\, \Norm{\Phi_0}{L^2_{\mathrm{cov}}}
    \;<\;
    \infty,
  \end{align*}
  $\vp_y \in L^1(\Om, \prob)$.  Thus, a Taylor expansion of $f_n(x+y) - f_n(x)$ combined with \eqref{eq:sublinearity:l1:step1} implies that $\limsup_{n \to \infty} I_2(n) = 0$ $\prob$-a.s, which completes the proof of \eqref{eq:sublinearity:chi:l1:averaged}.
\end{proof}
\begin{proof}[Proof of Proposition~\ref{prop:sublinearity:l1}]
We follow the argument in proof of \cite[Lemma~2]{BFO16} (cf.\ also \cite[Proposition~2.9]{ADS15}). For any $\de > 0$ Lemma~\ref{lemma:sublinearity:chi:l1:averaged} implies that  there exists $n_0 \equiv n_0(\om, \de)$ which is $\prob$-a.s.\ finite such that for all $n \geq n_0$ and $\prob$-a.e. $\om$,
  \begin{align*}
    \frac{1}{n^3}\,
    \int_0^{n^2}
      \frac{1}{|B(n)|} \sum_{x \in B(n)}
      \Big|
        \chi^j(\om, t, x)
        \,-\,
        \big(\chi^j(\om, \cdot, \cdot) \big)_{Q(n)}
      \Big|\,
    \md t
    \;\leq\;
    \de.
  \end{align*}
  Set $c \ldef \max_{n \in \bbN} |B(2n)|/B(n)$ and define $n_k \ldef 2^k n_0$ for any $\bbN \ni k \geq 1$.  Then, by the triangular inequality we find that for $\prob$-a.e. $\om$,
  \begin{align*}
    \frac{1}{n_{k}}\,
    \Big|
      \big(\chi^j(\om, \cdot, \cdot)\big)_{Q(n_k)}
      \,-\,
      \big(\chi^j(\om, \cdot, \cdot)\big)_{Q(n_{k-1})}
    \Big|
    \;\leq\;
    4 \de\, \frac{|B(n_k)|}{|B(n_{k-1})|}
    \;\leq\;
    4 c\, \de.
  \end{align*}
  In particular,
  \begin{align*}
    &\frac{1}{n_k}\,
    \Big|
      \big(\chi^j(\om, \cdot, \cdot)\big)_{Q(n_k)}
      \,-\,
      \big(\chi^j(\om, \cdot, \cdot)\big)_{Q(n_{0})}
    \Big|
    \nonumber\\[.5ex]
    &\mspace{36mu}\leq\;
    \frac{1}{n_k} \sum_{j=1}^k\,
    \Big|
      \big(\chi^j(\om, \cdot, \cdot)\big)_{Q(n_j)}
      \,-\,
      \big(\chi^j(\om, \cdot, \cdot)\big)_{Q(n_{j-1})}
    \Big|
    \;\leq\;
    4 c\, \de\, \sum_{j=1}^{k} \frac{1}{2^{k-j}}
    \;\leq\;
    8 c\, \de.
  \end{align*}
  Thus, for every $k \geq 1$ we obtain that
  \begin{align*}
    I(n_k)\,
    &\ldef\;
    \frac{1}{n_k^3}\,
    \int_0^{n_k^2}\mspace{-10mu}
      \frac{1}{|B(n_k)|} \sum_{x \in B(n_k)}
      \Big| \chi^j(\om, t, x) \Big|\,
    \md t
    \\[.5ex]
    &\;\leq\;
    \de \,+\,
    \frac{1}{n_k}\,
    \Big|
      \big(\chi^j(\om, \cdot, \cdot)\big)_{Q(n_k)}
      \,-\,
      \big(\chi^j(\om, \cdot, \cdot)\big)_{Q(n_{0})}
    \Big|
    \,+\,
    \frac{1}{n_k}\,
    \Big| \big(\chi^j(\om, \cdot, \cdot)\big)_{Q(n_0)} \Big|
    \\[.5ex]
    &\;\leq\;
    (1 + 8c)\, \de
    \,+\,
    \frac{1}{n_k}\,
    \Big| \big(\chi^j(\om, \cdot, \cdot)\big)_{Q(n_0)} \Big|.
  \end{align*}
  Hence, we conclude that for $\prob$-a.e. $\om$
  \begin{align*}
    \limsup_{\de \downarrow 0}\, \limsup_{k \to \infty}
    \frac{1}{n_k^3}\,
    \int_0^{n_k^2}\mspace{-10mu}
      \frac{1}{|B(n_k)|} \sum_{x \in B(n_k)}
      \Big| \chi^j(\om, t, x) \Big|\,
    \md t
    \;\leq\;
    0.
  \end{align*}
  Since $I(n) \leq 4 c\, I(n_k)$ for every $n \geq n_0$ such that $n_{k-1} < n \leq n_k$, the assertion follows.
\end{proof}

\section{Quenched invariance principle} \label{sec:QFCLT}
Throughout this section, which is devoted to the proof of our  main result in Theorem~\ref{thm:main}, we suppose that Assumption~\ref{ass:P} holds. We start with some comments on the construction of the VSRW $X$ and its stochastic completeness as they are not totally obvious in the present time-dependent degenerate situation.

We follow the construction of time-inhomogeneous Markov processes in \cite{St05}.  Let $\{E_{k} : k\geq 1 \}$ be a sequence of independent $\mathop{\mathrm{Exp}}(1)$-distributed random variables.  In order to construct the random walk $X$ under the law $\Prob^{\om}_{s,x}$ we specify its jump times $s < J_1 < J_2 < \ldots $ inductively.  Set $J_0 = s$ and $X_s = x$ and suppose that for any $k \geq 1$ the process $X$ is constructed on $[s, J_{k-1}]$.  Then, $J_k$ is given by
\begin{align*}
  J_k
  \;=\; J_{k-1} +
  \inf\Big\{%
    t \geq 0 \,:\,
    \int_{J_{k-1}}^{J_{k-1}+t}\! \mu_s^\om(X_{J_{k-1}}) \, \md s \geq E_k
  \Big\},
\end{align*}
and at the jump time $t = J_k$ the random walk $X$ jumps according to the transition probabilities $\{\om_t(X_{J_{k-1}},y)/\mu^\om_t(X_{J_{k-1}}), \, y\sim X_{J_{k-1}}\}$. Note that by Assumption~\ref{ass:P}(i) for every $e\in E_d$ the mapping $s\mapsto \om_s(e)$ is $\prob$-a.s.\ locally integrable.
\begin{lemma}
  For $\prob$-a.e.\ $\om$, $\Prob^{\om}_{0,0}$-a.s.\ the process  $(X_t : t\geq 0)$ does not explode, that is there are only finitely many jumps in finite time.
\end{lemma}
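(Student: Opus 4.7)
The plan is to construct $X$ as the increasing limit of walks killed on exiting the balls $B(R)$, with exit times $T_R$, and to show that $\ze \ldef \lim_{R \to \infty} T_R = \infty$ almost surely. First, I observe that by Assumption~\ref{ass:P}(i) combined with the time-stationarity of $\prob$, $\mean[\int_0^T \om_s(e)\, \md s] = T\, \mean[\om_0(e)] < \infty$ for every edge $e$ and $T > 0$. A Fubini/countable-union argument then yields, for $\prob$-a.e.\ $\om$, the finiteness of $\int_0^T \om_s(e)\, \md s$ simultaneously for all $e \in E_d$ and all $T > 0$. For such $\om$, the walk killed on exiting $B(R)$ is a finite-state time-inhomogeneous Markov chain with locally integrable jump rates, hence makes finitely many jumps in any bounded time interval $\Prob^\om_{0,0}$-a.s., so $T_R$ is a well-defined non-decreasing family of stopping times and $\ze$ is well-defined.

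The key estimate is a uniform-in-$R$ upper bound on the annealed expected number of jumps up to $T \wedge T_R$. Writing $p_{0,s}^{\om, R}(x,y) \ldef \Prob^\om_{0,x}[X_s = y,\, T_R > s]$ for the sub-Markov kernel of the killed walk and $N_t$ for the number of jumps of $X$ by time $t$, integration of the intensity gives
\begin{align*}
  \mean \otimes \Mean^\om_{0,0}\!\big[N_{T \wedge T_R}\big]
  \;=\;
  \int_0^T \mean\bigg[\sum_{y \in B(R)} \mu_s^\om(y)\, p_{0,s}^{\om, R}(0,y)\bigg] \md s.
\end{align*}
I plan to use the space-shift invariance of $\prob$ (applying the shift $\tau_{0,-y}$) to rewrite each term in the $y$-sum as $\mean\big[\mu_s^\om(0)\, p_{0,s}^{\om, B(R)-y}(-y,0)\big]$. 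After substituting $z = -y$ and invoking the inclusion $B(R)+z \subseteq B(2R)$ for $z \in B(R)$, which only enlarges the killing set and hence weakens killing, the inner sum is bounded by the column sum $\sum_{z \in B(2R)} p_{0,s}^{\om, B(2R)}(z,0)$. A parabolic maximum principle, which I would derive from the fact that the symmetry $\om_s(x,y) = \om_s(y,x)$ makes the column sums of the finite-dimensional generator $\cL_s^{\om, B(2R)}$ non-positive, forces this column sum to be at most $1$; combined with time-stationarity of $\prob$, this yields
\begin{align*}
  \mean \otimes \Mean^\om_{0,0}\!\big[N_{T \wedge T_R}\big]
  \;\leq\;
  T\, \mean\!\big[\mu_0^\om(0)\big]
  \;<\;
  \infty
\end{align*}
uniformly in $R$.

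To finish, on the event $\{\ze < T\}$ the walk exits every ball $B(R)$ by time $T$, so $\lim_R N_{T \wedge T_R} = \infty$ there. Monotone convergence and the uniform bound from the previous step then force $\prob \otimes \Prob^\om_{0,0}[\ze < T] = 0$ for every $T > 0$, and a countable union combined with Fubini gives $\Prob^\om_{0,0}[\ze = \infty] = 1$ for $\prob$-a.e.\ $\om$, which is the claim. The hard part will be the shift computation in the key estimate: because the killing region $\partial B(R)$ is not translation invariant, the shift that moves $\mu_s^\om(y)$ to the origin simultaneously displaces the killing region, which is why the enlarged ball $B(2R)$ must be introduced, and the final sub-invariance of the counting measure for the time-inhomogeneous killed semigroup on this finite box requires the (elementary but crucial) maximum-principle verification rather than being automatic from reversibility, since the time-inhomogeneous semigroup itself need not be self-adjoint.
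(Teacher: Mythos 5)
Your proof is correct, and it takes a genuinely different route from the paper. The paper's argument introduces a slowed-down process $(T_t,Y_t)$ whose jump rates are capped at $1$ (hence trivially non-explosive), shows that $\prob$ weighted by $(1\vee\mu^\om_0(0))/\mean[1\vee\mu^\om_0(0)]$ is stationary and ergodic for the environment process seen from $Y$, and then applies the ergodic theorem to show $T_t/t\to\mean[1\vee\mu^\om_0(0)]^{-1}$, so that the time change $T^{-1}$ is linear and $X=Y\circ T^{-1}$ lives for all time. Your approach instead kills the walk on exiting $B(R)$, uses the space-shift invariance of $\prob$ to move the expected jump intensity to the origin at the cost of displacing the killing region (which you correctly compensate for by enlarging to $B(2R)$), and then exploits the sub-invariance of counting measure for the killed time-inhomogeneous kernel --- i.e.\ the bound $\sum_z p^{\om,B(2R)}_{0,s}(z,0)\le 1$ --- to get the annealed estimate $\mean\otimes\Mean^\om_{0,0}[N_{T\wedge T_R}]\le T\,\mean[\mu^\om_0(0)]$ uniformly in $R$; monotone convergence finishes. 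The comparison is instructive: the paper's route leans on ergodicity and irreducibility of the environment process of $Y$, whereas yours needs only stationarity of $\prob$ and the first-moment bound $\mean[\mu^\om_0(0)]<\infty$, so it is in this respect more elementary and slightly more general. What it buys the paper is that the time-change argument avoids any maximum-principle verification; what yours requires in exchange is precisely the column-sum bound, and you are right to flag that for a time-inhomogeneous semigroup this is not automatic from reversibility of the counting measure. One clean way to close that step, equivalent to the maximum-principle argument you sketch, is a time-reversal observation: since the Dirichlet generator $\cL^{\om,B(2R)}_s$ is a symmetric matrix for every $s$, the transpose of the evolution system $P_{s,t}$ coincides with the evolution system of the time-reversed generator $s\mapsto\cL^{\om,B(2R)}_{T-s}$, which is itself a sub-Markov generator on the same finite box; hence the column sums of $P_{0,t}$ equal the row sums of a sub-Markov kernel and are $\le 1$.
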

\begin{proof}
  We will follow the approach in \cite[Section 5]{DGR15} and consider first a slowed-down process.  Let $\big((T_t, Y_t) : t \geq 0 \big)$ be the Markov process on  $\bbR \times\bbZ^d$ with generator $\cL_Y^\om$ acting on functions $u: \bbR\times \bbZ^d \rightarrow \bbR$ defined by
  \begin{align*}
    \cL_Y^{\om} u(t, x)
    \;=\;
    \frac{1}{1\vee \mu^\om_t(x)}\,
    \big(\partial_t u(t,x) + (\cL_t^ \om u(t,\cdot))(x) \big)
  \end{align*}
  with $\mu_t^\om(x) = \sum_{y\sim x} \om_t(x,y)$.  At point $(t,x)$ the slowed-down process $(Y_t : t \geq 0)$ will jump to $y \sim x$ with rate $\om_{T_t}(x,y)/(1\vee \mu^\om_{T_t}(x))$ and at time $t$ the time process $(T_t : t \geq 0)$ will increase at rate $(1 \vee \mu^\om_t(x))^{-1}$, more precisely
  \begin{align*}
    T_t \;=\; \int_0^t \frac{1}{1 \vee \mu^\om_{T_s}(Y_s)} \,\md s.
  \end{align*}
  Further, notice that the process $X$ can be obtained from $Y$ by a time change, namely
  \begin{align}\label{eq:time_change}
    (X_{t}) \;\overset{d}{=}\; (Y_{T^{-1}_t}),
  \end{align}
 where $T^{-1}$ denotes the right-continuous inverse of $T$. This will allow us to infer non-explosion of the process $X$ from that of $Y$. Clearly, the process $\big((T_t, Y_t) :t \geq 0\big)$ is non-explosive since $T_t \leq t$ and the jump-rates of $Y$ bounded from above by one.

  On the other hand, under Assumption \ref{ass:P} using the irreducibility of the process $Y$ it can be easily seen that the measure
  \begin{align*}
    \frac{1 \vee \mu^\om_0(0)}{\mean[1 \vee \mu^\om_0(0)]}\, d\prob
  \end{align*}
  is stationary and ergodic for the environment process $\big(\tau_{T_t,Y_t}\om : t \geq 0\big)$ (cf.\ e.g.\ \cite[Proposition~2.1]{An14}).  Thus, we may apply the ergodic theorem to obtain that
  \begin{align*}
    \lim_{t \to \infty} \frac{T_t}{t}
    \;=\;
    \frac{1}{\mean[1\vee \mu^\om_0(0)]},
    \qquad (\prob \otimes P_{0,0}^{\om})\text{-a.s.}
  \end{align*}
  In particular, $ \lim_{t \to \infty} T^{-1}_t /t =\mean[1\vee \mu^\om_0(0)]$  and by \eqref{eq:time_change} the process $(X_t : t \geq 0)$ is non-explosive for $\prob$-almost all $\omega$, $P^\om_{0,0}$-almost surely.
\end{proof}
For our purposes the main reason to construct the harmonic coordinates in Section~\ref{sec:harm_coord} is that they allow to decompose the random walk $X$ into a martingale part and a corrector part. We now state this decomposition as a Corollary.
\begin{corro}
  Set $M_t \ldef \Phi(\om, t, X_t)$.  Then, for $\prob$-a.e.\ $\om$, the process $(M_t : t \geq 0)$
  is a $\Prob^{\om}_{0,0}$-martingale and
  \begin{align} \label{eq:decompX}
    X_t \;=\; M_t + \chi(\om,t,X_t), \qquad t\geq 0.
  \end{align}
  Moreover, for every $v \in \bbR^d$, $v\cdot M$ is a $\Prob^{\om}_{0,0}$-martingale and its quadratic variation process is given by
  \begin{align} \label{eq:vM_qv}
    \langle v \cdot M \rangle_t
    \;=\;
    \int_0^t
      \sum_{y \in \bbZ^d} \om_s(X_s, y) \,
      \left(v \cdot  \big( \Phi_0(\tau_{s,0}\om,y) - \Phi_0(\tau_{s,0}\om, X_s)\big)\right)^2
    \md s.
  \end{align}
\end{corro}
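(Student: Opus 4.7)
The decomposition \eqref{eq:decompX} is immediate from the definition of the corrector: since $\chi(\om,t,x) = \Pi(\om,x) - \Phi(\om,t,x) = x - \Phi(\om,t,x)$, one has $X_t = \Phi(\om,t,X_t) + \chi(\om,t,X_t) = M_t + \chi(\om,t,X_t)$. The rest of the corollary is a standard consequence of the space-time harmonicity of $\Phi$ established in Theorem~\ref{thm:harm_coord}(ii).

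For the martingale property, my plan is to apply the Itô/Dynkin decomposition for the time-inhomogeneous pure-jump Markov process $X$ (whose non-explosion under $\Prob^{\om}_{0,0}$ was just established). For a function $u\!:\bbR \times \bbZ^d \to \bbR$ that is absolutely continuous in $t$ for each fixed $x$, this decomposition reads
\begin{equation*}
  u(t,X_t) - u(0,X_0)
  \;=\;
  \int_0^t (\partial_s u + \cL_s^{\om} u)(s, X_s)\, \md s \,+\, N_t^u,
\end{equation*}
where $N^u$ is a purely discontinuous local martingale (the compensated jump process of $u(\cdot,X_\cdot)$). By Theorem~\ref{thm:harm_coord}(ii), each coordinate $\Phi^j$ is absolutely continuous in time along every site and satisfies $(\partial_t + \cL_t^{\om})\Phi^j \equiv 0$ for a.e.\ $t$, so the drift term vanishes and $M^j$ equals $N^{\Phi^j}$, a purely discontinuous local martingale. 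The same computation for $v \cdot \Phi$ with $v \in \bbR^d$ shows $v \cdot M$ is a local martingale. To make the application of the Dynkin formula rigorous despite the unboundedness of $\Phi$, I would localise by the stopping times $T_R \ldef \inf\{t\geq 0 : |X_t| \geq R\}$, which tend $\Prob^{\om}_{0,0}$-a.s.\ to $\infty$ since $X$ does not explode, so that on $[0,T_R]$ the function $\Phi^j$ restricted to the visited sites is bounded on compact time intervals.

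For the quadratic variation formula \eqref{eq:vM_qv}, I would use the general formula for the predictable quadratic variation of a purely discontinuous local martingale driven by the jump mechanism of $X$: jumps of $v \cdot M$ occur at the jump times of $X$ and equal $v \cdot (\Phi(\om,s,X_s) - \Phi(\om,s,X_{s-}))$, while the compensator prescribes jumps to $y$ at rate $\om_s(X_s,y)$. This immediately yields
\begin{equation*}
  \langle v \cdot M \rangle_t
  \;=\;
  \int_0^t \sum_{y \in \bbZ^d} \om_s(X_s,y)\,
    \big(v \cdot (\Phi(\om,s,y) - \Phi(\om,s,X_s))\big)^2\, \md s.
\end{equation*}
The key algebraic observation is that the time-integral term in \eqref{eq:harm_coord2} is a function of $s$ only, independent of the spatial variable, so it cancels in the difference: $\Phi(\om,s,y) - \Phi(\om,s,x) = \Phi_0(\tau_{s,0\,}\om,y) - \Phi_0(\tau_{s,0\,}\om,x)$. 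Substituting gives exactly \eqref{eq:vM_qv}.

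The main obstacle, and the step I expect to require the most care, is upgrading $v \cdot M$ from a local to a true $\Prob^{\om}_{0,0}$-martingale. By the localisation argument above $(v \cdot M)_{\cdot \wedge T_R}$ is a martingale for each $R$; to pass to the limit one verifies $\Mean^{\om}_{0,0}[\langle v \cdot M\rangle_t] < \infty$ for each $t > 0$ and $\prob$-a.e.\ $\om$. Using the cocycle property, the integrand in \eqref{eq:vM_qv} can be rewritten as $\sum_{z \in \cN} \om_s(X_s, X_s+z)\,(v \cdot \Phi_0(\tau_{s, X_s}\om, z))^2$, which in expectation reduces, via a speed-measure / environment-viewed-from-the-particle argument analogous to the one used in the non-explosion proof, to an integral controlled by $|v|^2\,\|\Phi_0\|_{L^2_{\mathrm{cov}}}^2 < \infty$. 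This step — ensuring that the trajectory does not sample too large values of $\mu^{\om}$ and $|\Phi_0|$ — is the technical crux, but is a minor adaptation of the analogous arguments in the time-homogeneous setting of \cite{ADS15}.
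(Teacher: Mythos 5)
Your proof is correct and follows essentially the same route as the paper's: the decomposition comes from the definition of $\chi$, the martingale property from the space-time harmonicity \eqref{eq:harm_coord} via the Dynkin decomposition (which the paper states is "immediate"), and \eqref{eq:vM_qv} from computing the carr\'e du champ / jump compensator together with the observation that the $x$-independent time-integral term in \eqref{eq:harm_coord2} cancels in the increment. The only substantive difference is that you spell out the localisation by exit times and the $L^2$-criterion $\Mean^{\om}_{0,0}[\langle v\cdot M\rangle_t]<\infty$ (controlled through stationarity of the environment process and $\Norm{\Phi_0}{L^2_{\mathrm{cov}}}<\infty$) to pass from a local to a true martingale, a step the paper leaves implicit.
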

\begin{proof}
  From \eqref{eq:harm_coord} it is immediate that $M$ and hence also $v \cdot M$ are $\Prob^\om_{0,0}$-martingales, in particular their typical paths are c\`{a}dl\`{a}g.  The decomposition in \eqref{eq:decompX} follows directly from the definition of $\chi$.  It remains to show \eqref{eq:vM_qv}.  First note that the op\'erateur carr\'e du champ associated  with $\partial_t+ \mathcal{L}_t^\om$ is given by
  \begin{align*}
    \big(\partial_t + \mathcal{L}_t^\om\big) f^2
    \,-\,
    2f \big(\partial_t + \mathcal{L}_t^\om\big)f
    &\;=\;
    \big(\partial_t (f^2) - 2 f \partial_t f \big)
    \,+\,
    \big( \cL_t^\om (f^2) - 2 f \cL_t^\om f \big)
    \\[.5ex]
    &\;=\;
    \cL_t^\om (f^2) \,-\, 2 f \cL_t^\om f
  \end{align*}
  and
  \begin{align*}
    \big(\cL_t^\om f^2 - 2 f \cL_t^\om f \big) (t,x)
    \;=\;
    \sum_{y\in \bbZ^d} \om_t(x,y) \big( f(t,y)-f(t,x) \big)^2.
  \end{align*}
  Hence,
  \begin{align*}
    \langle v \cdot M \rangle_t
    \;=\;
    \int_0^t
      \sum_{y \in \bbZ^d} \om_s(X_s, y) \,
      \left(v \cdot  \big( \Phi(\om,s,y) - \Phi(\om,s,X_s)\big)\right)^2
    \md s
  \end{align*}
  and \eqref{eq:vM_qv} follows by \eqref{eq:harm_coord2}.
\end{proof}
\begin{lemma}\label{lem:eta}
  The measure $\prob$ is stationary, reversible and ergodic for the environment process $(\tau_{t,X_t}\om : t \geq 0)$.
\end{lemma}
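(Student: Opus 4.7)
My approach is to identify the generator of the time-homogeneous Markov process $\eta_t = \tau_{t, X_t}\om$ on $\Omega$ (time-homogeneity follows from the group property of space-time shifts, $\tau_{t,x}\circ\tau_{s,y} = \tau_{t+s,x+y}$) and to verify the three properties by direct $L^2(\prob)$ calculations, reducing ergodicity of $(\eta_t)$ to the ergodicity of $\prob$ under the group $\{\tau_{s,z}\}$ given by Assumption~\ref{ass:P}(ii).

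On the core $\cH^1_b$ introduced in Section~\ref{sec:harm_coord}, the generator of $\eta$ reads
\[
LF(\om) \;=\; \mD_0 F(\om) \,+\, L_x F(\om),
\qquad
L_x F(\om)\;\ldef\;\sum_{y\in\cN}\om_0(0,y)\bigl(F(\tau_{0,y}\om)-F(\om)\bigr),
\]
the first term coming from the deterministic time shift between jumps and the second from the quenched jump rates $\om_0(0,y)(\eta_t) = \om_t(X_t, X_t+y)$. For stationarity it suffices to show $\mean[LF] = 0$ on this core: Lemma~\ref{lem:prop_coll}(i) gives $\mean[\mD_0 F] = \langle 1,\mD_0 F\rangle_{L^2(\Omega,\prob)} = 0$, while for $L_x$ the change of variable $\om\mapsto\tau_{0,-y}\om$ combined with the edge symmetry $\om_0(0,y)=\om_0(-y,0)$ and the shift-invariance of $\prob$ yields $\mean[\om_0(0,y)\,F\circ\tau_{0,y}] = \mean[\om_0(0,-y)\,F]$; summing over $y\in\cN$ telescopes to zero.

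Reversibility (of the spatial jump part of the dynamics) follows from the same identity applied twice, giving
\[
\mean[F\,L_x G] \;=\; -\tfrac12\sum_{y\in\cN}\mean\bigl[\om_0(0,y)\,(F\circ\tau_{0,y}-F)(G\circ\tau_{0,y}-G)\bigr],
\]
which is manifestly symmetric in $(F,G)$ and identifies the associated Dirichlet form. This is the sense in which $\prob$ is reversible for the quenched conductance dynamics, consistent with the VSRW being reversible w.r.t.\ counting measure for each frozen conductance configuration.

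For ergodicity, let $A\in\cF$ with $P^L_t 1_A = 1_A$ $\prob$-a.s.\ for every $t\geq 0$. Written out, this says $\Mean^\om_{0,0}[1_A(\tau_{t,X_t}\om)] = 1_A(\om)$ for $\prob$-a.e.\ $\om$. By the quenched irreducibility of $X$ in each environment (a consequence of $\om_s(e)>0$ $\prob$-a.s., Assumption~\ref{ass:P}(i)), $\Prob^\om_{0,0}(X_t=z) > 0$ for every $(t,z)\in[0,\infty)\times\bbZ^d$; this forces $\prob[A\triangle\tau_{t,z}(A)]=0$ for every such $(t,z)$, and the group property of space-time shifts extends this to $t<0$. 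Assumption~\ref{ass:P}(ii) then gives $\prob[A]\in\{0,1\}$. The main obstacle is this final step: one must promote the $\prob$-a.s.\ identity $P^L_t 1_A = 1_A$ into $\prob[A\triangle\tau_{t,z}(A)]=0$ \emph{for every} $(t,z)$ simultaneously, which requires a Fubini/separability argument to handle the null sets uniformly in $(t,z)$ and a careful use of $L^2(\prob)$-contractivity of $P^L_t$ (itself a consequence of stationarity). The first three steps are by contrast routine once the generator is written down.
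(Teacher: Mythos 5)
The paper's proof of this lemma is a one--line citation to \cite[Lemma~2.4]{ADS15} and \cite[Proposition~2.1]{An14}, so any self--contained argument is a different route by default. Your route through the generator $L = \mD_0 + L_x$ acting on the core $\cH^1_b$ is the natural way to make that citation explicit, and the computations you indicate (the stationarity identity $\mean[LF]=0$ via Lemma~\ref{lem:prop_coll}(i) and the change of variable $\om\mapsto\tau_{0,-y}\om$, and the Dirichlet--form identity for $L_x$) are correct. Your handling of reversibility is in fact more careful than the lemma's wording: since $\mD_0$ is skew--adjoint on $L^2(\Om,\prob)$ while $L_x$ is symmetric, the full generator $L$ is not self--adjoint, so $\prob$ is not reversible for $\eta_t=\tau_{t,X_t}\om$ in the usual sense. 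Restricting the claim to the spatial jump part $L_x$, as you do, is the sensible reading; note that the paper's own discussion in Section~\ref{sec:harm_coord} emphasises exactly this asymmetry of $\partial_t + \cL_t^\om$, and only stationarity and ergodicity are actually invoked downstream (in the proof of Proposition~\ref{prop:mconv}).

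One correction to your closing remark: the ``main obstacle'' you single out is not an obstacle. From $P^L_t 1_A = 1_A$ $\prob$-a.s.\ (for a fixed $t$) together with $\Prob_{0,0}^{\om}(X_t=z)>0$ you deduce, for each fixed pair $(t,z)$, the \emph{scalar} equality $\prob[A\triangle\tau_{t,z}(A)]=0$. This is a statement about a number, not an $\om$-pointwise statement, so there is no collection of null sets to glue and no Fubini or separability argument is required; once the equality holds for each $(t,z)\in\bbR\times\bbZ^d$, Assumption~\ref{ass:P}(ii) applies verbatim. What \emph{does} merit a sentence, and is glossed over, is the quenched irreducibility $\Prob_{0,0}^{\om}(X_t=z)>0$ for all $t>0$: under Assumption~\ref{ass:P}(i) the rates $s\mapsto\om_s(e)$ are merely locally integrable, so one should note that $\int_0^t \om_s(e)\,\md s>0$ for $t>0$ (hence every finite path has positive probability) and invoke the non--explosion result established earlier in Section~\ref{sec:QFCLT}. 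Likewise, deducing stationarity from $\mean[LF]=0$ on $\cH^1_b$ tacitly uses that $\cH^1_b$ is a core for $L$; this is plausible and at the level of rigour the paper itself operates, but it is an assertion rather than a proof.
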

\begin{proof}
  This follows from the ergodicity of the environment and the irreducibility of the process. See \cite[Lemma 2.4]{ADS15} and \cite[Proposition 2.1]{An14} for detailed proofs.
\end{proof}
\begin{prop} \label{prop:mconv}
  Let $M^{(n)}_t \ldef \frac{1}{n} M_{n^2t}$, $t \geq 0$.  Then, for $\prob$-a.e.\ $\om$, the sequence of processes $\{M^{(n)} : n \in \bbN\}$ converges in law in the Skorohod topology to a Brownian motion with a non-degenerate covariance matrix $\Si^2$ given by
  \begin{align*}
  \Si_{ij}^2
  \;=\;
  \mean\!%
  \Big[
    {\textstyle \sum_{x \in \bbZ^d}}\; \om_0(0,x)\,
    \Phi_0^i(\om,x)\, \Phi_0^j(\om,x)
  \Big].
  \end{align*}
\end{prop}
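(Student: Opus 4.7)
The plan is to apply a martingale functional central limit theorem (e.g.\ Helland's theorem, or Theorem~VIII.3.11 in Jacod--Shiryaev) to the sequence $\{v \cdot M^{(n)}\}$ for each fixed $v \in \bbR^d$, and then combine the one-dimensional limits via the Cram\'er--Wold device. For each $v$, what needs to be verified is (a) convergence of the quadratic variation $\langle v \cdot M^{(n)}\rangle_t \to t\, v^T \Si^2 v$ in $\Prob^\om_{0,0}$-probability for $\prob$-a.e.\ $\om$, and (b) a Lindeberg-type condition controlling the maximal jump of $v \cdot M^{(n)}$.

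The key observation for step (a) is that the integrand in \eqref{eq:vM_qv} can, by the cocycle property of $\Phi_0$, be written as an autonomous function of the environment process $\{\tau_{s, X_s}\om\}$. Indeed, $\Phi_0(\tau_{s,0}\om,y) - \Phi_0(\tau_{s,0}\om, X_s) = \Phi_0(\tau_{s,X_s}\om, y - X_s)$, so with $z = y - X_s$ one has
\begin{align*}
  \psi_v(\tau_{s,X_s}\om)
  \;\ldef\;
  \sum_{z \in \cN} (\tau_{s,X_s}\om)_0(0,z)\,
  \big(v \cdot \Phi_0(\tau_{s,X_s}\om, z)\big)^2,
\end{align*}
which is the integrand. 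Since $\Phi_0^j \in L^2_\mathrm{cov}$ we have $\psi_v \in L^1(\Om, \prob)$ with $\mean[\psi_v] = v^T \Si^2 v$. By Lemma~\ref{lem:eta} the environment process is stationary and ergodic under $\prob \otimes \Prob^\om_{0,0}$, so Birkhoff's ergodic theorem yields
\begin{align*}
  \langle v \cdot M^{(n)}\rangle_t
  \;=\;
  \frac{1}{n^2} \int_0^{n^2 t}\! \psi_v(\tau_{s,X_s}\om)\, \md s
  \;\underset{n \to \infty}{\longrightarrow}\;
  t\, v^T \Si^2 v,
\end{align*}
for $(\prob \otimes \Prob^\om_{0,0})$-a.e.\ realisation, and via Fubini one extracts $\prob$-a.s.\ convergence in $\Prob^\om_{0,0}$-probability.

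For step (b) I would verify the sufficient condition $\mean^\om_{0,0}\!\big[\sup_{s \leq T}\, |v \cdot M^{(n)}_s - v \cdot M^{(n)}_{s^-}|^2\big] \to 0$. A jump at time $s \leq T$ has size $\tfrac{1}{n}\,|v \cdot \Phi_0(\tau_{n^2 s, X_{n^2 s^-}}\om, X_{n^2 s} - X_{n^2 s^-})|$, which is again a function of the environment process evaluated at a jump instant. Bounding the supremum by the full $\ell^2$-sum of squared jumps (dominated by $n^2 \langle v\cdot M^{(n)}\rangle_T$ divided by $n^2$, combined with an ergodic-theorem tail estimate on $\max_{|z|=1} |v\cdot\Phi_0(\tau_{s,X_s}\om,z)|^2\,\om_s(X_s, X_s+z)$) gives the required negligibility; this is essentially the same style of argument as in \cite{An14,ADS15} and constitutes the main technical step, albeit a routine one given that $\psi_v \in L^1$.

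Finally, the non-degeneracy of $\Si^2$ is proved by contradiction. If $v^T \Si^2 v = 0$ for some $v \ne 0$, then $v \cdot \Phi_0(\om, z) = 0$ $\prob$-a.s.\ for every $z \in \cN$, because $\om_0(0,z) > 0$ $\prob$-a.s.\ by Assumption~\ref{ass:P}(i). Hence $v \cdot \chi_0(\om, z) = v \cdot z$ for all $|z|=1$, and taking expectations together with $\mean[\chi_0^j(\om,z)] = 0$ (Corollary~\ref{corro:property:chi}(i)) forces $v \cdot z = 0$ for every $z \in \cN$, whence $v = 0$. Combining steps (a), (b) and Cram\'er--Wold yields convergence of $M^{(n)}$ in the Skorohod topology to a Brownian motion with non-degenerate covariance $\Si^2$.
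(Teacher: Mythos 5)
Your proof plan is correct and, for steps (a) and (b), follows essentially the same route as the paper: rewrite the quadratic variation of $v\cdot M^{(n)}$ as an additive functional of the environment process via the cocycle property, apply the ergodic theorem (Lemma~\ref{lem:eta}), and invoke a martingale CLT such as Helland's; the Lindeberg control is, as in the paper, handled by adapting the arguments of \cite{ABDH12, MP07} to the time-dynamic setting.

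The one place you genuinely diverge from the paper is the non-degeneracy of $\Si^2$. You observe that $v^T\Si^2 v=0$ forces $v\cdot\Phi_0(\om,z)=0$ $\prob$-a.s.\ for every $z\in\cN$ (since $\om_0(0,z)>0$ a.s.), whence $v\cdot\chi_0(\om,z)=v\cdot z$ a.s.; taking expectations and using $\mean[\chi_0^j(\om,z)]=0$ from Corollary~\ref{corro:property:chi}~(i) then gives $v\cdot z=0$ for all $|z|=1$, i.e.\ $v=0$. This is a correct and notably shorter argument. The paper instead follows \cite[Proposition~2.5]{DNS16}: it propagates $v\cdot\Phi_0(\tau_{t,0}\om,x)=0$ to all space-time points via the cocycle property and time-continuity, deduces $|v\cdot x|=|v\cdot\chi(\om,t,x)|$, and then derives a contradiction by comparing the $L^1$-sublinearity of the corrector (Proposition~\ref{prop:sublinearity:l1}) with the trivial lower bound $\liminf_n |B(n)|^{-1}\sum_{x\in B(n)}|v\cdot x/n|>0$. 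Your version relies on the mean-zero property of the extension of a potential field (which the paper establishes via the moment condition $\mean[\om_0(e)^{-1}]<\infty$ and $L^2(\Om\times\cN,m)$-closure), whereas the paper's version relies instead on the corrector sublinearity estimate and is the one that generalises more readily to settings (e.g.\ percolation clusters) where a clean zero-mean identity on nearest-neighbour increments is less directly available. Both are valid here; yours is the more elementary for this model.
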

\begin{proof}
  The proof is based on the martingale convergence theorem by Helland (see Theorem 5.1a) in \cite{He82}); the proofs in \cite{ABDH12} or \cite{MP07} can be easily transferred into the time dynamic setting. The argument is based on the fact that the quadratic variation of $M^{(n)}$ converges by an application of the ergodic theorem, since it can be written in terms of the environment process (cf.\ \eqref{eq:vM_qv}), which is ergodic by Lemma~\ref{lem:eta}.

   In order to show that $\Si^2$ is nondegenerate we follow the argument in \cite[Proposition~2.5]{DNS16}. Assume that $v\cdot \Sigma^2 v=0$ for some $v\in \bbR^d$ with $|v|=1$. Then, since $\Phi_0$ satisfies the cocycle-property, we deduce from Lemma~\ref{lemma:L2cov}~(ii) that for $\prob$-a.e.\ $\om$, $v\cdot \Phi_0(\om,x)=0$ for all $x\in \bbZ^d$. Further, using the time-homogeneity of $\Phi_0$ and its continuity w.r.t.\ time we get for $\prob$-a.e.\ $\om$ that $v\cdot \Phi_0(\tau_{t,0}\om,x)=0$ for all $x\in \bbZ^d$ and $t\geq 0$. In view of \eqref{eq:harm_coord2} this implies $v\cdot \Phi(\om,t,x)=0$ for all $x\in \bbZ^d$ and $t\geq 0$. Recall that $x = \chi(\om, t, x) + \Phi(\om,t, x)$. Thus, for $\prob$-a.e.\ $\om$, $|v \cdot x| = |v \cdot \chi(\om,t, x)|$ for all $x \in \bbZ^d$ and $t\geq 0$. In particular,
  \begin{align}\label{eq:nondeg:1}
    \frac{1}{|B(n)|}
    \sum_{x \in B(n)}\mspace{-6mu}
    \big|v \cdot \tfrac{1}{n} x\big|
    \;=\;
    \frac 1 {n^3} \int_0^{n^2}
    \frac{1}{|B(n)|}
    \sum_{x \in B(n)} \mspace{-6mu}
    \big|v \cdot  \chi(\om,t, x)\big| \, \md t.
  \end{align}
  By Proposition~\ref{prop:sublinearity:l1}, the right-hand side of \eqref{eq:nondeg:1} vanishes for $\prob$-a.e.\ $\om$ as $n$ tends to infinity.  On the other hand, for any $\de \in (0, 1)$ we have that
  \begin{align*}
    \frac{1}{n^{d}}\! \sum_{x \in B(n)} \mspace{-6mu}
    \big|v \cdot \tfrac{1}{n} x\big|
    &\;\geq\;
    \frac{\de^2}{n^d}\! \sum_{\substack{x \in B(n)\\x \ne 0}} \mspace{-6mu}
    \indicator_{\{|x| > \de n\}}\; \indicator_{\{|v \cdot x/|x|| > \de\}}
    \\
    &\;\geq\;
    \frac{\de^2}{n^d}
    \bigg(
      |B(n)| \,-\, |B(\de n)|
      \,- \sum_{\substack{x \in B(n)\\ x \ne 0}} \mspace{-4mu}
      \indicator_{\{|v \cdot x/|x|| \leq \de\}}
    \bigg).
  \end{align*}
 Since  $|B(n)| \geq c n^d$ and the other two terms in the bracket above are of order $\de n^d$, by choosing $\de$ sufficiently small, there exists $c > 0$ such that
  \begin{align*}
    \liminf_{n \to \infty}\,
    \frac{1}{|B(n)|}
    \sum_{x \in B(n)}\mspace{-6mu}
    \big|v \cdot \tfrac{1}{n} x\big|
    \;\geq\;
    c
    \;>\;
    0,
  \end{align*}
   which gives a contradiction.  Thus,  $v \cdot \Si^2 v > 0$ for all $v \in \bbR^d\setminus\{0\}$.
\end{proof}
In order to conclude the proof of the invariance principle, an almost sure uniform control of the corrector is required, which is a direct consequence from the sublinearity of corrector established in Proposition~\ref{prop:sublin_corr}.
\begin{prop} \label{prop:contr_corr}
  Suppose that Assumption~\ref{ass:moment} holds and let $T > 0$ be arbitrary. Then, for $\prob$-a.e.\ $\om$,
  \begin{align}
    \sup_{0 \,\leq\, t \,\leq\, T}\,
    \frac{1}{n}\, \Big| \chi\big(\om,n^2t, n\, X_{t}^{(n)}\big) \Big|
    \;\underset{n \to \infty}{\longrightarrow}\;
    0
    \quad \text{ in $\Prob_{\!0,0}^\om$-probability}.
  \end{align}
\end{prop}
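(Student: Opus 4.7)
The plan is to combine the spatial sublinearity of the corrector established in Proposition~\ref{prop:sublin_corr} with a tightness argument for the rescaled walk. Writing $n X^{(n)}_t = X_{n^2 t}$, the quantity to control becomes $\sup_{s\leq n^2 T}\frac{1}{n}|\chi(\om,s,X_s)|$. For any $K>0$ introduce the event $E_K^n \ldef \{\sup_{0\leq t\leq T}|X^{(n)}_t|\leq K\}$. On $E_K^n$ the whole trajectory lives in $[0,n^2 T]\times B(Kn) \subset Q(m_n)$ with $m_n\ldef\lceil n\max(K,\sqrt{T})\rceil$, and hence
\begin{align*}
\indicator_{E_K^n}\,\sup_{0\leq s\leq n^2 T}\tfrac{1}{n}\,|\chi(\om,s,X_s)|
\;\leq\; \tfrac{m_n}{n}\cdot\tfrac{1}{m_n}\max_{Q(m_n)}|\chi(\om,\cdot,\cdot)|.
\end{align*}
Since $m_n/n\to\max(K,\sqrt{T})$ is bounded, Proposition~\ref{prop:sublin_corr} applied at scale $m_n$ shows that this bound tends to $0$ as $n\to\infty$ for $\prob$-a.e.\ $\om$.

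Next I would establish the tightness statement $\lim_{K\to\infty}\limsup_{n\to\infty}\Prob_{0,0}^\om[(E_K^n)^c]=0$ for $\prob$-a.e.\ $\om$. For this I use the martingale decomposition $X_s=M_s+\chi(\om,s,X_s)$ from \eqref{eq:decompX} together with a stopping-time argument. Define $\tau_K\ldef\inf\{t\geq 0:|X^{(n)}_t|>K\}$; because the walk jumps only to nearest neighbours, $|X_{n^2\tau_K}|\leq Kn+1$ on $\{\tau_K\leq T\}$, so on that event
\begin{align*}
Kn\;<\;|X_{n^2\tau_K}|\;\leq\;\sup_{0\leq s\leq n^2 T}|M_s|\,+\,\max_{Q(\tilde m_n)}|\chi(\om,\cdot,\cdot)|,
\qquad \tilde m_n\ldef\lceil n\max(K+1,\sqrt{T})\rceil.
\end{align*}
Dividing by $n$ and invoking Proposition~\ref{prop:sublin_corr} once more to absorb the rightmost term, I find that for $n$ large enough (on a set of full $\prob$-measure) one has $\Prob_{0,0}^\om[\tau_K\leq T]\leq\Prob_{0,0}^\om\bigl[\sup_{0\leq t\leq T}|M^{(n)}_t|>K-1\bigr]$. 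By Proposition~\ref{prop:mconv}, $M^{(n)}$ converges in law to $\Si\cdot W$ in the Skorohod topology; since the limit has continuous paths, the supremum functional is continuous at $\Si\cdot W$ almost surely, whence the continuous mapping theorem and the Portmanteau theorem give $\limsup_n\Prob_{0,0}^\om[\tau_K\leq T]\leq\Prob[\sup_{[0,T]}|\Si\cdot W|\geq K-1]$, which vanishes as $K\to\infty$.

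Combining the two steps, for every $\epsilon>0$ and every $K>0$,
\begin{align*}
\Prob_{0,0}^\om\!\left[\sup_{0\leq t\leq T}\tfrac{1}{n}\bigl|\chi\bigl(\om,n^2 t,nX^{(n)}_t\bigr)\bigr|>\epsilon\right]
\;\leq\;\Prob_{0,0}^\om[(E_K^n)^c]\,+\,\indicator_{\{\frac{1}{n}\max_{Q(m_n)}|\chi(\om,\cdot,\cdot)|>\epsilon\}},
\end{align*}
and the indicator vanishes for all sufficiently large $n$ by Proposition~\ref{prop:sublin_corr}. Taking $\limsup_n$ and then $K\to\infty$ concludes the proof. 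The main obstacle is the apparent circularity: to bound the corrector along the path one first needs the path to remain in a space-time cylinder of the form $Q(\cdot)$, but obtaining such confinement in turn seems to require control of $\chi$. The stopping-time/one-step-jump device above untangles this, because at the hitting time $\tau_K$ the walk still lies in $B(Kn+1)$, so the spatial sublinearity of $\chi$ on $Q(\tilde m_n)$ applies, and the remaining contribution is on the martingale part, whose tightness is delivered by the CLT in Proposition~\ref{prop:mconv}.
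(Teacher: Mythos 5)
Your argument is correct and reconstructs precisely the standard localization argument the paper delegates to \cite[Proposition~2.13]{ADS15} and \cite{FK97,FK99}: confine the path to a cylinder $Q(m_n)$ via a tightness estimate, apply Proposition~\ref{prop:sublin_corr} there, and obtain tightness from the martingale FCLT (Proposition~\ref{prop:mconv}) together with the corrector decomposition \eqref{eq:decompX} and the nearest-neighbour jump bound $|X_{n^2\tau_K}|\leq Kn+1$, which breaks the apparent circularity. This is essentially the same route as the cited references.
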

\begin{proof}
  Given Proposition~\ref{prop:sublin_corr} this follows by similar arguments as in \cite[Proposition~2.13]{ADS15}, \cite[pp.\ 1884--1885]{FK97} or \cite[p.\ 761]{FK99}.
\end{proof}
Theorem \ref{thm:main} now follows from Proposition \ref{prop:mconv} and Proposition \ref{prop:contr_corr}.

\section{Mean value inequality for time-inhomogeneous Poisson equation} \label{sec:mos_it}
\subsection{Setup and preliminaries}
Let $G = (V, E)$ be an infinite, connected, locally finite graph with vertex set $V$ and (non-oriented) edge set $E$.  We will write $x \sim y$ if $\{x,y\} \in E$.  Moreover, for $A \subset V$ and $x, y \in V$, we will simply write $x \vee y \in A$ for $(x \in A) \vee (y \in A)$.  The graph $G$ is endowed with the counting measure that assigns to any $A \subset V$ simply the number $|A|$ of elements in $A$.  Further, we denote by $B(x,r)$ the closed ball with center $x$ and radius $r$ with respect to the natural graph distance $d$, that is $B(x,r) \ldef \{y \in V \mid d(x,y) \leq \lfloor r \rfloor\}$. Finally, for a set $A\subset V$ we define its boundary by $\partial A \ldef \{ x\in A \,:\, \exists\, y \in V \setminus A \text{ s.\ th. } \{x,y\} \in E \}$.
\smallskip

Throughout this section we will make the following assumption on $G$.
\begin{assumption}\label{ass:graph}
  The graph $G$ satisfies the following conditions:
  \begin{itemize}
  \item[(i)] volume regularity of order $d$ for large balls, that is there exists $d \geq 2$ and $C_{\mathrm{reg}} \in (0, \infty)$ such that for all $x \in V$ there exists $N_1(x) < \infty$ with
    \begin{align}\label{eq:ass:vd}
      C^{-1}_{\mathrm{reg}}\, n^d  \;\leq\; |B(x,n)| \;\leq\; C_{\mathrm{reg}}\, n^d,
      \qquad \forall\, n \geq N_1(x).
    \end{align}
  \item[(ii)] local Sobolev inequality $(S_{d'}^1)$ for large balls, that is there exists $d' \geq d$ and $C_{\mathrm{S_1}} \in (0, \infty)$ such that for all $x \in V$ the following holds. There exists $N_2(x) < \infty$ such that for all  $n \geq N_2(x)$,
    \begin{align}\label{eq:sob:S1}
      \Bigg(\sum_{y \in B(x,n)}\! |u(y)|^{\frac{d'}{d'-1}}\Bigg)^{\!\!\frac{d'-1}{d'}}
      \;\leq\;
      C_{\mathrm{S_1}}\, n^{1 - \frac{d}{d'}}\mspace{-6mu}
      \sum_{\substack{z \vee z' \in B(x,n)\\ \{z,z'\} \in E}}\mspace{-6mu}
      \big|u(z) - u(z') \big|
    \end{align}
    for all $u\! : V \to \bbR$ with $\supp u \subset B(x,n)$.
  \end{itemize}
\end{assumption}

\begin{remark}
 The Euclidean lattice, $(\bbZ^d, E_d)$, satisfies Assumption~\ref{ass:graph} with $d' = d$ and $N_1(x) = N_2(x) = 1$, where (ii) follows from the isoperimetric inequality, see e.g.\ \cite[Theorem~3.2.7]{Ku14}.  For random graphs, e.g.\ supercritical Bernoulli percolation clusters, such an inequality is only satisfied for large sets. There exists $\th \in (0, 1)$ and $N(x) < \infty$, $\prob$-a.s., such that for all $n \geq N(x)$,
  \begin{align*}
    |\partial A| \geq C_{\mathrm{iso}} |A|^{(d-1)/d}
  \end{align*}
  for all connected $A \subset B(x, n)$ with $|A| \geq n^{\th}$, see \cite{BM03, MR04}.  As it was pointed out by M.~Barlow, in such a case Assumption~\ref{ass:graph} (ii) holds with $d' = d/(1-\th)$, see \cite{DNS16}.
\end{remark}
For functions $f\!:A \to \bbR$, where either $A \subseteq V$ or $A \subseteq E$, the $\ell^p$-norm $\norm{f}{p}{A}$ will be taken with respect to the counting measure.  The corresponding scalar products in $\ell^2(V)$ and $\ell^2(E)$ are denoted by $\scpr{\cdot}{\cdot}{V}$ and $\scpr{\cdot}{\cdot}{E}$, respectively.  For any non-empty, finite $B \subset V$ and $p \in (0, \infty)$, we introduce space-averaged norms on functions $f\!: B \to \bbR$ by
\begin{align*}
  \Norm{f}{p, B}
  \;\ldef\;
  \bigg(\frac{1}{|B|}\; \sum_{x \in B}\, |f(x)|^p \bigg)^{\!\!1/p}.
\end{align*}
Moreover, for any non-empty compact interval $I \subset \bbR$ and any finite $B \subset \bbZ^d$ and $p, p' > 0$, we define space-time-averaged norms on functions $u\!: I \times B \to \bbR$ by
\begin{align*}
  \Norm{u}{p, p', I \times B}
  \;\ldef\;
  \bigg(
    \frac{1}{|I|}\; \int_I \Norm{u_t}{p, B}^{p'}\; \md t
  \bigg)^{\!\!1/p'}
  \qquad \text{and} \qquad
  \Norm{u}{p, \infty, I \times B}
  \;\ldef\;
  \max_{t \in I} \Norm{u_t}{p, B},
\end{align*}
where $u_t( \cdot ) \ldef u(t,  \cdot )$ for any $t \in I$.
\begin{lemma}\label{lemma:mos:interp}
  Suppose that $\rho > 1$ and $q' \in [1, \infty]$ are given and $Q \subset \bbR \times V$.  Then, for every $1 < \ga_1 \leq \rho$ and $q' / (q'+1) \leq \ga_2 < \infty$ such that
  \begin{align}\label{eq:mos:interp:cond}
    \frac{1}{\ga_1} \,+\,
    \frac{1}{\ga_2}\, \bigg(1 - \frac{1}{\rho}\bigg) \frac{q'}{q'+1}
    \;=\;
    1
  \end{align}
  the following estimate holds
  \begin{align}\label{eq:mos:interp}
    \Norm{u}{\ga_1, \ga_2, Q}
    \;\leq\;
    \Norm{u}{1, \infty, Q} \,+\, \Norm{u}{\rho, q'/(q'+1), Q}.
  \end{align}
\end{lemma}
\begin{proof}
  This follows by an application of H\"older's and Young's inequality, as in \cite[Lemma~1.1]{KK77}
\end{proof}
Let us endow the graph $G$ with positive, time-dependent weights, that is we consider a family $\om = \{ \om_t(e) :  t \in \bbR, e\in E\} \subset (0, \infty)^{\bbR \times E}$.  Further, we define for any $t \in \bbR$ measures $\mu_t^{\om}$ and $\nu_t^{\om}$ on $V$ by
\begin{align}
  \mu_t^{\om}(x) \;\ldef\; 1 \vee \sum_{x \sim y}\, \om_t(x,y)
  \qquad \text{and} \qquad
  \nu_t^{\om}(x) \;\ldef\; 1 \vee \sum_{x \sim y}\, \frac{1}{\om_t(x,y)}.
\end{align}
It is convenient to introduce a potential theoretic setup.  First, for $f\!: V \to \bbR$ and $F\!: E \to \bbR$ we define the operators $\nabla f\!: E \to \bbR$ and $\nabla^*F\!: V \to \bbR$ by
\begin{align*}
  \nabla f(e) \;\ldef\; f(e^+) - f(e^-),
  \qquad \text{and} \qquad
  \nabla^*F (x)
  \;\ldef\;
  \sum_{e: e^+ =\,x}\! F(e) \,-\! \sum_{e:e^-=\, x}\! F(e),
\end{align*}
where for each non-oriented edge $e \in E$ we specify one of its two endpoints as its initial vertex $e^+$ and the other one as its terminal vertex $e^-$.  Nothing of what will follow depends on the particular choice.  Since  $\scpr{\nabla f}{F}{E} = \scpr{f}{\nabla^* F}{V}$ for all $f \in \ell^2(V)$ and $F \in \ell^2(E)$, $\nabla^*$ can be seen as the adjoint of $\nabla$.  Notice that in the discrete setting the product rule reads
\begin{align}\label{eq:rule:prod}
  \nabla(f g)
  \;=\;
  \av{f} \nabla g \,+\, \av{g} \nabla f,
\end{align}
where $\av{f}(e) \ldef \frac{1}{2}(f(e^+) + f(e^-))$.  Moreover, we denote by $\cL_t^{\om}$ the following time-dependent operator acting on bounded functions $f\!: V \to \bbR$ as
\begin{align*}
  \big(\cL^{\om}_t f\big)(x)
  \;\ldef\;
  \sum_{x \sim y}\, \om_t(x,y)\, \big(f(y) - f(x)\big)
  \;=\;
  - \nabla^*(\om_t \nabla f) (x).
\end{align*}
For any $t \in \bbR$, the \emph{time-dependent Dirichlet form} associated to $\cL_t^{\om}$ is given by
\begin{align} \label{eq:def:dform}
  \cE_t^{\om}(f,g)
  \;\ldef\;
  \scpr{f}{-\cL_t^{\om} g}{V}
  \;=\;
  \scpr{\nabla f}{\om_t \nabla g}{E},
\end{align}
and we set $\cE_t^{\om}(f) \ldef \cE_t^{\om}(f,f)$.

Note that \eqref{eq:sob:S1} is a Sobolev inequality on an unweighted graph, while for our purposes we need a version involving the time-dependent weights.
\begin{prop}[local space-time Sobolev inequality]\label{prop:sobolev}
  Consider a graph $(V, E)$ that satisfies Assumption~\ref{ass:graph} with $d' \geq d \geq 2$ and set
  \begin{align}\label{eq:def:rho}
    \rho
    \;\equiv\;
    \rho(d', q)
    \;=\;
    \frac{d'}{d' - 2 + d'/q}.
  \end{align}
  Let $I \subset \bbR$ be a compact interval.  Then, for any $q \in [1, \infty), q' \in [1, \infty]$, there exists $C_{\mathrm{\,S}} \equiv C_{\mathrm{\,S}}(d'\!, q) < \infty$ such that for any $x \in V$ and $n \geq N_1(x) \vee N_2(x)$,
  \begin{align}\label{eq:sob:ineq:2}
    \Norm{u^2}{\rho, q'/(q'+ 1), I \times B(x,n)}
    \;\leq\;
    C_{\mathrm{\,S}}\, n^2\, \Norm{\nu^{\om}}{q, q'\!, I \times B(x,n)}\;
    \bigg(
      \frac{1}{|I|}\, \int_I\, \frac{\cE_{t}^{\om}(u_t)}{|B(x,n)|}\; \md t
    \bigg)
  \end{align}
  for every $u:\bbR \times V \to \bbR$ with $\supp u \subset I \times B(x, n)$.  If $d' > 2$, \eqref{eq:sob:ineq:2} holds for $q = \infty$.
\end{prop}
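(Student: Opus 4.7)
The plan is to reduce the space-time inequality to a purely spatial weighted Sobolev inequality, applied at each time, and then integrate using Hölder's inequality in time.

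\textbf{Step 1 (Spatial weighted Sobolev).} The main analytic ingredient is to establish, for each $t \in \bbR$ and each $u_t\!:V\to\bbR$ with $\supp u_t \subset B(x,n)$, the spatial inequality
\begin{equation*}
  \Norm{u_t^2}{\rho, B(x,n)}
  \;\leq\;
  C\, n^2\, \Norm{\nu_t^{\om}}{q, B(x,n)}\,
  \frac{\cE_t^{\om}(u_t)}{|B(x,n)|}.
\end{equation*}
I would derive this by applying $(S_{d'}^1)$ from Assumption~\ref{ass:graph}(ii) to $|u_t|^{2\gamma}$ with the ``magic'' exponent
\begin{equation*}
  \gamma
  \;=\; \frac{p}{2p - \al},
  \qquad
  \al \;=\; \tfrac{d'}{d'-1},
  \quad
  p \;=\; \tfrac{q}{q-1},
\end{equation*}
chosen so that $2\gamma\al = 2(2\gamma-1)p$ and, by a short algebraic check, $\gamma\al = \rho$ (equivalently $1/\rho = 2/\al - 1/p = 1-2/d'+1/q$, matching \eqref{eq:def:rho}). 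Combining the discrete product rule $|\nabla |u_t|^{2\gamma}| \leq C\,\av{|u_t|^{2\gamma-1}}|\nabla u_t|$, Cauchy--Schwarz to split off the weighted gradient, the edge-to-vertex bound $\sum_e \av{f}(e)^2/\om_t(e) \leq \tfrac{1}{2}\sum_x f(x)^2 \nu_t^{\om}(x)$, and Hölder's inequality in space with exponent $q$ applied to $\sum_x f^2 \nu_t^{\om}$, one arrives at an inequality of the form
\begin{equation*}
  \Bigl(\sum u_t^{2\gamma\al}\Bigr)^{1/\al}
  \;\leq\;
  C\, n^{1-d/d'}\,
  \Bigl(\sum u_t^{2(2\gamma-1)p}\Bigr)^{1/(2p)}\,
  \Bigl(\sum (\nu_t^{\om})^q\Bigr)^{1/(2q)}\,
  \cE_t^{\om}(u_t)^{1/2}.
\end{equation*}
Since by the choice of $\gamma$ the two powers of $u_t$ coincide (both equal $u_t^{2\rho}$), one can solve for this common term, square, and insert the volume bound $|B(x,n)|\asymp n^d$ from Assumption~\ref{ass:graph}(i) to obtain the announced $n^2$-scaling.

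\textbf{Step 2 (Time integration).} Setting $s := q'/(q'+1)\in(0,1)$ and $e_t := \cE_t^{\om}(u_t)/|B(x,n)|$, I would raise the spatial inequality to the $s$-th power, integrate over $t \in I$, and apply Hölder in time to the right-hand side with conjugate exponents $r_1=q'+1$ and $r_2=(q'+1)/q'$. The choice of exponents is dictated by matching: $sr_1=q'$ makes the factor $\Norm{\nu^{\om}}{q,q',I\times B(x,n)}$ appear linearly, while $sr_2=1$ makes $\int_I e_t\,\md t$ appear linearly. Dividing by $|I|$, taking the $s$-th root, and using the identity $1/(q'+1)+s=1$ to recombine the $|I|$-exponents yields precisely the inequality in the statement. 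The edge cases $q'=\infty$ (and $q=\infty$ when $d'>2$) are handled by the same argument, replacing the Hölder step in the corresponding variable by an $L^{\infty}$-norm.

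\textbf{Main obstacle.} The substantive difficulty lies in Step~1: one must pick the exponent $\gamma$ so that, after applying $(S_{d'}^1)$ to $|u_t|^{2\gamma}$ and then Cauchy--Schwarz and the spatial Hölder with exponent $q$, the two occurrences of $u_t$ on either side of the resulting inequality carry the \emph{same} power. Only this identification, encoded in the algebraic identity $\gamma\al = \rho$, converts what would otherwise be a mixed inequality between two distinct $L^r$-norms of $u_t$ into a genuine weighted Sobolev inequality in $L^{2\rho}$; and it is precisely this identity that pins down the critical exponent $\rho = d'/(d'-2+d'/q)$ of the statement. Once Step~1 is established, Step~2 is routine Hölder bookkeeping.
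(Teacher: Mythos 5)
Your proposal is correct and follows essentially the same route as the paper: at each fixed time apply a weighted spatial Sobolev inequality deduced from $(S_{d'}^1)$ and volume regularity, then integrate in $t$ and apply H\"older with conjugate exponents $q'+1$ and $(q'+1)/q'$. The only difference is that the paper cites \cite[Proposition~3.5]{ADS15} for the spatial step (your Step~1), whereas you reconstruct its derivation explicitly, including the correct ``magic exponent'' $\gamma = p/(2p-\alpha)$ yielding $\gamma\alpha=\rho$; both the algebra and the time-H\"older bookkeeping in your Step~2 check out.
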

\begin{proof}
  First, notice that for any $x \in V$ and $n \geq N_1(x) \vee N_2(x)$, \eqref{eq:sob:S1} can be rewritten in the following way
  \begin{align*}
    \Norm{u_t}{\frac{d'}{d'-1}, B(x,n)}
    \;\leq\;
    C_{\mathrm{S_1}}\, n^{1 - \frac{d}{d'}}\, |B(x,n)|^{\frac{1}{d'}}\,
    \frac{\norm{\nabla u_t}{1}{E}}{|B(x,n)|}
    \overset{\eqref{eq:ass:vd}}{\;\leq\;}
    C_{\mathrm{S_1}} C_{\mathrm{reg}}^{1/d'}\, n\,
    \frac{\norm{\nabla u_t}{1}{E}}{|B(x,n)|}
  \end{align*}
  for every $u:\bbR \times V \to \bbR$ with $\supp u_t \subset B(x, n)$ for all $t \in I$.
  Proceeding as in the proof of \cite[Proposition~3.5]{ADS15}, we deduce that there exists $C_{\mathrm{\,S}} < \infty$ such that
  \begin{align*}
    \Norm{u_t^2}{\rho, B(x,n)}
    \;\leq\;
    C_{\mathrm{\,S}}\, n^2\, \Norm{\nu_t^{\om}}{q, B(x,n)}\,
    \frac{\cE_{t}^{\om}(u_t)}{|B(x,n)|}.
  \end{align*}
  Thus, for any $q' \geq 1$ the assertion follows by H\"older's inequality.
\end{proof}

\subsection{Maximal inequality via Moser iteration}
In this section, our main objective is to establish a maximum inequality for the solution of a particular Poisson equation having a right-hand side in divergence form.  More precisely, we denote by $u:\bbR\times V \rightarrow \bbR$ a solution of
\begin{align}\label{eq:poisson_eq}
  \partial_t u + \cL_t^{\om} u \;=\; \nabla^* V_t^{\om}   ,
  \qquad \text{on} \quad Q=I \times B,
\end{align}
where $I = [s_1, s_2] \subset \bbR$ is an interval, $B \subset V$ is a finite, connected subset of $V$ and $V_t^{\om}\!: \bbR \times E \to \bbR$ is given by
\begin{align}\label{eq:local:drift}
  V_t^{\om}(e)
  \;\ldef\;
  \om_t(e)\, \nabla f(e)
\end{align}
for some function $f\!:V \to \bbR$.

For any $x_0 \in V$, $t_0 \geq 0$ and $n \geq 1$, we denote by $Q(n) \equiv [t_0, t_0 + n^2] \times B(x_0, n)$ the corresponding time-space cylinder, and we set
\begin{align*}
  Q(\si n)
  \;=\;
  [t_0, t_0 + \si n^2] \times B(x_0, \si n), \qquad \sigma\in [0,1].
\end{align*}
Now we are ready to state the main result of this section.

\begin{theorem} \label{thm:max_ineq}
  Suppose that Assumption~\ref{ass:graph} holds for some $d' \geq d \geq 2$.  Assume that $u$ solves $\partial_t u + \cL_t^{\om} u = \nabla^* V_t^{\om}$ on $Q(n)$, where the function $f$ in \eqref{eq:local:drift} satisfies $|\nabla f(e)| \leq 1/n$ for all $e \in E$.  Then, for any $\De \in [0, 1)$ and $p, p', q, q' \in (1, \infty]$ satisfying
  \begin{align}\label{eq:cond:pq}
    \frac{1}{p} \cdot \frac{p'}{p'-1} \cdot \frac{q'+1}{q'}
    \,+\,
    \frac{1}{q}
    \;<\;
    \frac{2}{d'}
  \end{align}
  there exist $N(\De) < \infty$, $\ga \equiv \ga(d'\!,p, p'\!, q, q') \in (0,1]$, $\ka \equiv \ka(d'\!, p, p', q, q') \in (1, \infty)$ and $C_1 \equiv C_1(d) < \infty$ such that for all $n \geq \max\{2 N_1(x_0), 2 N_2(x_0), N(\De)\}$ and $1/2 \leq \si' < \si \leq 1$ with $\si - \si' > n^{-\De}$,
  \begin{align}\label{eq:mos:general}
    &\max_{(t,x) \in Q(\si' n)} |u(t,x)|
    \nonumber\\[.5ex]
    &\mspace{36mu}\leq\;
    C_1\,
    \Bigg(
      \frac{\Norm{1 \vee \mu^{\om}}{p, p'\!, Q(n)}\,
        \Norm{1 \vee \nu^{\om}}{q, q'\!,Q(n)}}{(\si - \si')^2}
    \Bigg)^{\!\!\ka}\,
    M_{\gamma}\big(\Norm{u}{2\al p_*, 2\al p'_*, Q(\si n)}\big),
  \end{align}
  with $\al > 1$ defined in \eqref{eq:def:alpha}, $M_{\gamma}(s) \ldef s^{\gamma} \vee s$ and $p_* \ldef p/(p-1)$, $p_*' \ldef p'/(p'-1)$.
\end{theorem}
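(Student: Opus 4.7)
The plan is to run Moser's iteration scheme in the time-inhomogeneous, parabolic setting, combining the elliptic blueprint of \cite{ADS15} with the two-parameter scheme of \cite{KK77} for space-time averaged norms. The target is a one-step self-improvement of the form
\[
  \Norm{u_+}{2\be\al,\, 2\be'\al,\, Q(\si' n)}
  \;\leq\;
  \bigl(
    C\,(\si-\si')^{-2}\,
    \Norm{\mu^\om}{p,p',Q(n)}\,\Norm{\nu^\om}{q,q',Q(n)}
  \bigr)^{1/(2\al)}
  \Norm{u_+}{2\al,\,2\al,\,Q(\si n)}
\]
with $\be,\be'>0$ satisfying $\be\be'>1$, the latter being exactly condition \eqref{eq:cond:pq}. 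Applying the same bound to $-u$ (by linearity of \eqref{eq:poisson_eq}) and iterating along a geometric sequence of shrinking radii $\si_k\downarrow\si'$ with exponents $(\al_k,\al'_k)=(\be^k\al_0,(\be')^k\al_0)$ growing to infinity will then yield the claimed $L^\infty$-bound on $Q(\si' n)$, followed by a standard absorption step to pass from the fixed starting exponent $\al_0$ to an arbitrary $\al\in(0,\infty)$ as in the last step of \cite[\S3]{ADS15}.

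For the energy estimate I would fix a product cutoff $\eta(t,x)=\ze(t)\ph(x)$ supported in $Q(\si n)$ and identically $1$ on $Q(\si' n)$, with $\lvert\ze'\rvert\leq c(\si-\si')^{-2}n^{-2}$ and $\lvert\nabla\ph\rvert\leq c(\si-\si')^{-1}n^{-1}$. Test \eqref{eq:poisson_eq} against $\vp:=u_+^{2\al-1}\eta^2$: the time term integrates via $\partial_t u_+^{2\al}=2\al u_+^{2\al-1}\partial_t u_+$ into a time-boundary term plus a $\partial_t(\eta^2)$-error, while the spatial term, after the discrete product rule \eqref{eq:rule:prod}, produces a coercive contribution $\cE_t^\om(u_+^\al\eta)$ up to cutoff errors of schematic size $\om_t(e)\av{u_+^\al}^2(\nabla\eta)^2$. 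The source $\nabla^*V_t^\om$, by summation by parts, becomes $\scpr{V_t^\om}{\nabla\vp}{E}$; using $\lvert V_t^\om(e)\rvert\leq\om_t(e)/n$ together with Young's inequality absorbs a small fraction of this into the Dirichlet term, leaving only an extra $n^{-2}\mu^\om$-weighted bound on $u_+^{2\al}$. The net outcome is the double estimate
\[
  \max_{t}\Norm{(u_+^\al\eta)_t^2}{1,B(x_0,n)}
  \,+\,
  \frac{1}{|I|}\int_I \frac{\cE_t^\om(u_+^\al\eta)}{|B(x_0,n)|}\,\md t
  \;\leq\;
  \frac{C}{(\si-\si')^2 n^2}\,
  \Norm{\mu^\om u_+^{2\al}}{1,1,Q(\si n)},
\]
uniformly in $\al\geq 1$.

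Next I would apply Proposition \ref{prop:sobolev} to $u_+^\al\eta$: the time-average of the Dirichlet form translates into $\Norm{(u_+^\al\eta)^2}{\rho,q'/(q'+1),Q(\si n)}$ weighted by $\Norm{\nu^\om}{q,q',Q(n)}$, the Sobolev factor $n^2$ cancelling the $n^{-2}$ from the energy estimate. Interpolating this bound with the $L^1$-in-space, $L^\infty$-in-time bound via \eqref{eq:mos:interp}, and then applying H\"older with exponents $(p,p')$ to separate $\mu^\om$ from $u_+^{2\al}$ on the right-hand side, yields the one-step estimate displayed in the first paragraph with $(\be,\be'):=(\ga_1/p_*,\ga_2/p'_*)$, where $p_*=p/(p-1)$, $p'_*=p'/(p'-1)$ and $(\ga_1,\ga_2)$ is any admissible pair in \eqref{eq:mos:interp:cond}. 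A direct computation shows that $\be\be'>1$ is equivalent to \eqref{eq:cond:pq} via the definition \eqref{eq:def:rho} of $\rho$. Iterating this inequality between $Q(\si_k n)$ and $Q(\si_{k+1}n)$ along $\si_k=\si'+(\si-\si')2^{-k}$ and collecting constants yields a telescoping product $\prod_k C_k^{1/\al_k}$ with $C_k=c\cdot 4^k\cdot(\text{moment norms})$; both $\sum_k 1/\al_k$ and $\sum_k k/\al_k$ converge because $\al_k$ grows geometrically, so the limit $k\to\infty$ produces the announced bound, with $\ka$ the resulting geometric sum of moment exponents and $\ga$ determined by the starting exponent $\al_0$ via the downgrade-and-absorb trick on a nested family of radii.

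The main obstacle is the two-parameter bookkeeping: spatial and temporal integrabilities must both improve at each Moser step, and the iteration converges only if the joint gain $\be\be'$ strictly exceeds one. Assembling the Sobolev exponent $\rho$, the interpolation constraint \eqref{eq:mos:interp:cond} and the H\"older exponents $(p_*,p'_*)$ so that \eqref{eq:cond:pq} is sharp, and then tracking the polynomial dependence on $(\si-\si')^{-1}$ through the iteration to produce the clean $(\si-\si')^{-2\ka}$ in the final bound, is the central combinatorial effort. The inhomogeneity $\nabla^*V_t^\om$ plays only a secondary role once one observes that the hypothesis $\lvert\nabla f\rvert\leq 1/n$ makes the source scale exactly like the homogeneous diffusion, allowing it to be absorbed at essentially no additional cost.
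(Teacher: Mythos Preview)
Your overall architecture---energy estimate, parabolic Sobolev (Proposition~\ref{prop:sobolev}), interpolation via \eqref{eq:mos:interp}, then Moser iteration on shrinking cylinders, and a final absorption step to reach arbitrary $\al$---is exactly the paper's approach. Two points in the bookkeeping, however, are not right as stated.

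\emph{The two-parameter iteration does not telescope.} After the H\"older step separating $\mu^\om$ from $u_+^{2\al}$, the right-hand side carries the pair $(2\al p_*, 2\al p_*')$, not $(2\al,2\al)$ as in your displayed one-step bound. The left-hand side, after interpolation, carries $(2\al\ga_1, 2\al\ga_2)$. For step $k+1$ to chain onto step $k$ you need $(2\al_k\ga_1,2\al_k\ga_2)=(2\al_{k+1}p_*,2\al_{k+1}p_*')$, which forces $\ga_1/\ga_2=p_*/p_*'$ and hence $\be=\be'$. There is thus only a single iteration ratio, determined by feeding the constraint \eqref{eq:mos:interp:cond} with the specific choice $(\ga_1,\ga_2)=(\al p_*,\al p_*')$; the resulting $\al=\tfrac{1}{p_*}+\tfrac{1}{p_*'}\bigl(1-\tfrac{1}{\rho}\bigr)\tfrac{q'}{q'+1}$ satisfies $\al>1$ precisely when \eqref{eq:cond:pq} holds. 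Your condition ``$\be\be'>1$'' with two independent gains cannot be iterated in this structure, and it is not the condition that emerges.

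\emph{The source does not produce only a $u_+^{2\al}$-term.} Testing the divergence-form source against $\eta^2\tilde u^{2\al-1}$ and applying Young's inequality leaves, in addition to the absorbable Dirichlet piece, terms weighted by $\mu_t^\om$ that involve $|u|^{2\al-1}$ and $|u|^{2\al-2}$ (from the cross term $\av{\tilde u^{2\al-1}}\nabla\eta^2$ and from $\av{|u|^{\al-1}}\nabla\tilde u^\al$, respectively); see Lemma~\ref{lem:mos:DF}. These lower-order powers are the reason the paper's one-step estimate (Proposition~\ref{prop:mos:iter}) carries an exponent $\ga_k\le 1$ on the right-hand side and the final bound \eqref{eq:mos:general} has $\Norm{u}{\al,\al,Q(\si n)}^{\ga}$ with $\ga$ possibly strictly less than $1$. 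If you absorb them into a single $|u|^{2\al}$ term as you suggest, you lose this feature and the final statement would be incorrect when $\Norm{u}{\al,\al,Q(\si n)}<1$.
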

We now turn to the proof of Theorem~\ref{thm:max_ineq}. As a first step we prove the following energy estimate for solutions of \eqref{eq:poisson_eq}.
\begin{lemma} \label{lem:mos:DF}
  Suppose that $Q = I \times B$, where $I = [s_1, s_2]$ is an interval and $B$ is a finite, connected subset of $V$.  Consider a smooth function $\ze\!: \bbR \to [0, 1]$ with $\ze = 0$ on $[s_2, \infty)$ and a function $\eta\!: V \to [0, 1]$ such that
  \begin{align*}
    \supp \eta \;\subset\; B
    \qquad\text{and} \qquad
    \eta \;\equiv\; 0 \quad \text{on} \quad \partial B.
  \end{align*}
  Further, let $u$ be a solution of \eqref{eq:poisson_eq} on $Q$.  Then, there exists $C_2 < \infty$ such that for all $\al \geq 1$ and $p, p', p_*, p'_* \in (1, \infty)$ with $1/p + 1/p_* = 1$ and $1/p' + 1/p'_* = 1$,
  \begin{align}\label{eq:mos:energy}
    &\frac{1}{|I|}\, \Norm{\ze (\eta\, \tilde{u}^{\al})^2}{1, \infty, Q}
    \,+\,
    \frac{1}{|I|}
    \int_{I} \ze(t)\; \frac{\cE_{t, \eta^2}^{\om}(\tilde u_t^{\al})}{|B|} \, \md t
    \nonumber\\[.5ex]
    &\mspace{72mu}\leq\;
    C_2\, \al^2\, \Norm{\mu^{\om}}{p, p', Q}\,
    \Big(
      \norm{\nabla \eta}{\infty}{E}^2
      +
      \|\ze'\|_{\raisebox{-.0ex}{$\scriptstyle L^{\raisebox{.2ex}{$\scriptscriptstyle {\!\infty\!}$}} (I)$}}
    \Big)\, \Norm{|u|^{2\al}}{p_*, p_*', Q}
    \nonumber\\[.5ex]
    &\mspace{108mu}+\,
    C_2\, \al^2\, \Norm{\mu^{\om}}{p, p', Q}\,
    \norm{(\nabla \eta)(\nabla f)}{\infty}{E}\, \Norm{|u|^{2\al-1}}{p_*, p_*', Q}
    \nonumber\\[1ex]
    &\mspace{108mu}+\,
    C_2\, \al^2\ \Norm{\mu^{\om}}{p, p', Q}\,
    \norm{\nabla f}{\infty}{E}^2\, \Norm{|u|^{2\al-2}}{p_*, p_*', Q}
  \end{align}
  where $\tilde{u}^{\al} \ldef |u|^{\al} \cdot \sign u$, $\cE_{t, \eta^2}^{\om}(g) \ldef \scpr{\nabla g}{\av{\eta^2}\, \om_t\nabla g}{E}$ and $f$ being the function appearing in \eqref{eq:local:drift}.
\end{lemma}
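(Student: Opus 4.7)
The plan is to test the equation $\partial_t u+\cL_t^{\om}u=\nabla^{*}V_t^{\om}$ against the cut-off weighted function $\ze(t)\eta(x)^{2}\tilde u(t,x)^{2\al-1}$, sum over $x\in V$, and integrate in $t$. Because $\partial_t(\tilde u^{\al})^{2}=2\al\,\tilde u^{2\al-1}\partial_t u$, integration by parts in time over $[T,s_{2}]$ (using $\ze(s_{2})=0$) combined with the adjoint identity $\scpr{g}{\nabla^{*}F}{V}=\scpr{\nabla g}{F}{E}$ yields the identity
\begin{align*}
  \ze(T)\sum_{x}\eta^{2}u(T,x)^{2\al}
  &\;=\;-\int_{T}^{s_{2}}\ze'(t)\sum_{x}\eta^{2}u^{2\al}\,\md t
  \;-\;2\al\int_{T}^{s_{2}}\ze\,\scpr{\nabla(\eta^{2}\tilde u_{t}^{2\al-1})}{\om_{t}\nabla u_{t}}{E}\,\md t\\
  &\mspace{32mu}-\;2\al\int_{T}^{s_{2}}\ze\,\scpr{\nabla(\eta^{2}\tilde u_{t}^{2\al-1})}{\om_{t}\nabla f}{E}\,\md t.
\end{align*}
I will use this in two ways: taking $T=s_{1}$ keeps the full time-integrated Dirichlet-form on the left, while taking $\sup_{T\in I}$ (and dropping the nonnegative Dirichlet-form integral, which only improves the bound) controls $\sup_{T}\ze(T)\sum_{x}\eta^{2}u(T)^{2\al}$. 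Adding the two estimates gives simultaneous control of both quantities on the LHS of \eqref{eq:mos:energy} by the same upper bound.

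For the quadratic-form integrand I apply the discrete product rule \eqref{eq:rule:prod} to split $\nabla(\eta^{2}\tilde u_{t}^{2\al-1})=\av{\eta^{2}}\nabla\tilde u_{t}^{2\al-1}+\av{\tilde u_{t}^{2\al-1}}\nabla\eta^{2}$. The pairing of $\av{\eta^{2}}\nabla\tilde u_{t}^{2\al-1}$ with $\om_{t}\nabla u_{t}$ is the \emph{good} contribution: the elementary edgewise convexity inequality $(\tilde a^{2\al-1}-\tilde b^{2\al-1})(a-b)\ge c\al^{-1}(\tilde a^{\al}-\tilde b^{\al})^{2}$, uniform in $\al\ge 1$, produces a lower bound $c\al^{-1}\cE_{t,\eta^{2}}^{\om}(\tilde u_{t}^{\al})$ that is moved to the LHS. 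The cross piece $\av{\tilde u_{t}^{2\al-1}}\nabla\eta^{2}\cdot\om_{t}\nabla u_{t}$ is bounded using $|\nabla\eta^{2}|\le 2\norm{\nabla\eta}{\infty}{E}$ together with the pointwise discrete identity $|\av{\tilde u^{2\al-1}}\nabla u|\le C\al^{-1}\av{|u|^{\al}}\,|\nabla\tilde u^{\al}|$; a Young inequality then decomposes it into a small multiple of the good term (reabsorbed into the LHS) plus an error of the form $C\al^{2}\norm{\nabla\eta}{\infty}{E}^{2}\sum_{e}\om_{t}\av{|u|^{2\al}}$.

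The forcing integrand $\scpr{\nabla(\eta^{2}\tilde u_{t}^{2\al-1})}{\om_{t}\nabla f}{E}$ is handled the same way. After the product-rule split, the $\av{\eta^{2}}$-piece combined with $|\nabla f|\le\norm{\nabla f}{\infty}{E}$ and Young's inequality gives $\ep\,\cE_{t,\eta^{2}}^{\om}(\tilde u_{t}^{\al})+C\al^{2}\ep^{-1}\norm{\nabla f}{\infty}{E}^{2}\sum_{e}\om_{t}\av{|u|^{2\al-2}}$, and the $\nabla\eta^{2}$-piece, carrying no gradient of $u$, is controlled directly by $\norm{(\nabla\eta)(\nabla f)}{\infty}{E}\sum_{e}\om_{t}\av{|u|^{2\al-1}}$. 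Choosing $\ep$ small enough and absorbing, the net error is a sum of three edge integrals $C\al^{2}\sum_{e}\om_{t}\av{|u|^{\beta}}$ with $\beta\in\{2\al,2\al-1,2\al-2\}$, weighted respectively by $\|\ze'\|_{L^{\infty}(I)}+\norm{\nabla\eta}{\infty}{E}^{2}$, $\norm{(\nabla\eta)(\nabla f)}{\infty}{E}$, and $\norm{\nabla f}{\infty}{E}^{2}$. Passing from edge sums to vertex sums through the bound $\sum_{e\ni x}\om_{t}(e)\le\mu_{t}^{\om}(x)$, dividing by $|I|\,|B|$, and applying Hölder's inequality with exponent pair $(p,p_{*})$ in space and $(p',p_{*}')$ in time turns each factor $|I|^{-1}\int_{I}|B|^{-1}\sum_{x}\mu_{t}^{\om}|u|^{\beta}\,\md t$ into $\Norm{\mu^{\om}}{p,p',Q}\Norm{|u|^{\beta}}{p_{*},p_{*}',Q}$, producing exactly \eqref{eq:mos:energy}. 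The main technical obstacle is the $\al$-uniform discrete convexity inequality relating $\nabla u\cdot\nabla\tilde u^{2\al-1}$ to $(\nabla\tilde u^{\al})^{2}$: getting the correct $\al^{-1}$ lower bound (and the analogous edgewise pointwise identity for the cross term) is what ultimately dictates the $\al^{2}$ prefactor on the right of \eqref{eq:mos:energy} and how much slack the Young-inequality absorption leaves; everything else is bookkeeping with the product rule, Cauchy-Schwarz/Young, and Hölder.
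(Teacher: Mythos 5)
Your proposal follows essentially the same route as the paper: test the Poisson equation against $\eta^2\tilde{u}_t^{2\al-1}$, invoke the discrete product rule to split $\nabla(\eta^2\tilde u_t^{2\al-1})$, use the convexity estimate (the paper's \eqref{eq:A1:pol:ub}) to extract the Dirichlet form, absorb the cross terms via Young with $\ve\sim 1/\al$, integrate in time against $\ze$ using $\ze(s_2)=0$, pass from edge sums to $\mu_t^\om$, and finish with H\"older in space and time. The only structural difference is that you integrate the identity in time first and then take $T=s_1$ respectively $\sup_T$, whereas the paper first establishes the pointwise differential inequality \eqref{eq:poisson:energy} and then integrates; these are equivalent.

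One intermediate claim is wrong and is also internally inconsistent with your own stated conclusion: the asserted ``pointwise discrete identity'' $|\av{\tilde u^{2\al-1}}\nabla u|\le C\al^{-1}\av{|u|^{\al}}\,|\nabla\tilde u^{\al}|$ does \emph{not} hold uniformly in $\al$ (take $a=1$, $b=0$: both sides equal $1/2$, so no $\al^{-1}$ gain is possible). The correct edgewise bound is the paper's \eqref{eq:A1:chain:ub2}, which carries no $\al$-factor. Indeed, had your $\al^{-1}$ version been true, the Young step with $\ve\sim 1/\al$ followed by the overall $2\al$ normalization would produce an $\al^0$ error, not the $C\al^2$ you announce; the $\al^2$ you (correctly) end up with is exactly what one gets from the $\al$-free inequality \eqref{eq:A1:chain:ub2}. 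So the offending $\al^{-1}$ is spurious: removing it makes the argument match the paper, and the final estimate \eqref{eq:mos:energy} is unaffected. The $\al^{-1}$ does belong, and only belongs, in the convexity bound \eqref{eq:A1:pol:ub} that gives the lower bound on the Dirichlet form; that single factor, combined with the $\ve\sim 1/\al$ in Young, is what produces the $\al^2$ on the right.
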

\begin{proof}
  Let us consider a function $u$ such that $\partial_t u + \cL_t^\om\, u = \nabla^* V_t^\om$ on $Q = I \times B$.  Then, for any $t \in I$, a summation by parts yields
  \begin{align}\label{eq:poisson:weak}
    \frac{1}{2 \al}\, \partial_t \norm{\eta\, \tilde{u}_t^{\al}}{2}{V}^2
    \;=\;
    \scpr{\nabla(\eta^2 \tilde{u}_t^{2\al-1})}{\om_t \nabla u_t}{E}
    \,+\,
    \scpr{\nabla(\eta^2 \tilde{u}_t^{2\al-1})}{V_t^\om}{E}.
  \end{align}
  Proceeding as in the proof of \cite[Lemma~3.2]{ADS16a}, we will estimate the terms on the right-hand side of \eqref{eq:poisson:weak} separately.  Let us point out that the constants $c \in (0, \infty)$  appearing in the computations below, is independent of $\al$, but may change from line to line.  In view of \eqref{eq:A1:pol:ub}, we have that
  \begin{align*}
    \scpr{\av{\eta^2} \nabla \tilde{u}_t^{2\al-1}}{\om_t \nabla u_t}{E}
    \;\geq\;
    \frac{2\al-1}{\al^2}\, \cE_{t, \eta^2}^{\om}\big(\tilde{u}_t^{\al}\big)
    \;\geq\;
    \frac{1}{\al}\, \cE_{t, \eta^2}^{\om}\big(\tilde{u}_t^{\al}\big).
  \end{align*}
  On the other hand, by \eqref{eq:A1:chain:ub2} and Young's inequality, that reads $|a b| \leq \frac{1}{2}(\ve a^2 + b^2/\ve)$ for $\ve \in (0, \infty)$, we obtain that
  \begin{align*}
    \scpr{\av{\tilde{u}_t^{2\al-1}} \nabla \eta^2}{\om_t \nabla u_t}{E}
    &\;\geq\;
    -c\,
    \norm{\om_t (\nabla \tilde{u}_t^{\al})(\nabla \eta^2) \av{|u_t|^{\al}}}{1}{E}
    \nonumber\\[.5ex]
    &\;\geq\;
    -c\, \ve\, \cE_{t, \eta^2}^{\om}\big(\tilde{u}_t^{\al}\big)
    \,-\,
    \frac{c}{\ve}\, \norm{\nabla \eta}{\infty}{E}^2\,
    \norm{|u_t|^{2\al} \mu_t^{\om}}{1}{B},
  \end{align*}
  where we used that $\nabla \eta^2 = 2 \av{\eta} (\nabla \eta)$ and $\av{\eta}^2 \leq 2 \av{\eta^2}$.  Hence,
  %
  \begin{align}\label{eq:rhs:poisson1}
    &\scpr{\nabla(\eta^2 \tilde{u}_t^{2\al-1})}{\om_t \nabla u_t}{E}
    \nonumber\\[.5ex]
    &\mspace{36mu}\geq\;
    \bigg(\frac{1}{\al} - c\, \ve \bigg)\,
    \cE_{t, \eta^2}^{\om}\big(\tilde{u}_t^{\al}\big)
    \,-\,
    \frac{c}{\ve}\, \norm{\nabla \eta}{\infty}{E}^2\,
    \norm{|u_t|^{2\al} \mu_t^{\om}}{1}{B}.
  \end{align}
  Next, we consider the second term on the right-hand side of \eqref{eq:poisson:weak}.  Since $\eta \in [0, 1]$,
  \begin{align*}
    \scpr{\av{\tilde{u}_t^{2\al-1}} \nabla \eta^2}{V_t^{\om}}{E}
    \;\geq\;
    -c\, \norm{(\nabla \eta)(\nabla f)}{\infty}{E}\,
    \norm{|u_t|^{2\al-1} \mu_t^{\om}}{1}{B}.
  \end{align*}
  By applying \eqref{eq:A1:chain:ub1} and Young's inequality, we find for any $\al \geq 1$,
  \begin{align*}
    \scpr{\av{\eta^2} \nabla \tilde{u}_t^{2\al-1}}{\om_t \nabla f}{E}
    &\;\geq\;
    -c\,
    \norm{\om_t \av{\eta^2} \av{|u_t|^{\al-1}} (\nabla \tilde{u}_t^{\al})(\nabla f)}{1}{E}
    \\[.5ex]
    &\;\geq\;
    -c\, \ve\, \cE_{t, \eta^2}^{\om}\big(\tilde{u}_t^{\al}\big)
    \,-\,
    \frac{c}{\ve}\, \norm{\nabla f}{\infty}{E}^2\,
    \norm{|u_t|^{2\al-2} \mu_t^{\om}}{1}{B}.
  \end{align*}
  Hence, by combining these estimates, we obtain that the second term on the right-hand side of \eqref{eq:poisson:weak} is bounded from below by
  \begin{align}\label{eq:rhs:poisson2}
    \scpr{\nabla(\eta^2 \tilde{u}_t^{2\al-1})}{V_t^{\om}}{E}
    &\;\geq\;
    -\, c\, \ve\, \cE_{t, \eta^2}^{\om}\big(\tilde{u}_t^{\al}\big)
    \,-\,
    \frac{c}{\ve}\, \norm{\nabla f}{\infty}{E}^2\,
    \norm{|u_t|^{2\al-2} \mu_t^{\om}}{1}{B}
   \nonumber\\[.5ex]
   &\mspace{36mu}-\,
   c\, \norm{(\nabla \eta)(\nabla f)}{\infty}{E}\,
   \norm{|u_t|^{2\al-1} \mu_t^{\om}}{1}{B}.
  \end{align}
  Thus, in view of \eqref{eq:rhs:poisson1} and \eqref{eq:rhs:poisson2} and by choosing $\ve = 1/(4 \,c\, \al)$, we deduce from \eqref{eq:poisson:weak} that there exists $C_2 < \infty$ such that
  \begin{align}\label{eq:poisson:energy}
    -\partial_t \Norm{(\eta\, \tilde{u}_t^{\al})}{2, B}^2
    \,+\,
    \frac{\cE_{t, \eta^2}^{\om}\big(\tilde{u}_t^{\al}\big)}{|B|}
    &\;\leq\;
    \frac{C_2}{2}\, \al^2\, \norm{\nabla \eta}{\infty}{E}^2\,
    \Norm{|u_t|^{2\al} \mu_t^{\om}}{1, B}
    \nonumber\\
    &\mspace{36mu}+\,
    \frac{C_2}{2}\, \al^2\,
    \norm{\nabla f}{\infty}{E}^2\, \Norm{|u_t|^{2\al-2} \mu_t^{\om}}{1, B}
    \nonumber\\[1ex]
    &\mspace{36mu}+\,
    \frac{C_2}{2}\, \al^2\, \norm{(\nabla \eta)(\nabla f)}{\infty}{E}\,
    \Norm{|u_t|^{2\al-1} \mu_t^{\om}}{1, B}.
  \end{align}
  Moreover, since $\ze(s_2) = 0$,
  \begin{align*}
    \int_s^{s_2}\!\!
      \!- \ze(t)\, \partial_t \Norm{(\eta\, \tilde{u}_t^{\al})}{2, B}^2\;
    \md t
    &\;=\;
    \int_s^{s_2}
      \Big(
        -\partial_t\big( \ze(t) \Norm{(\eta\, \tilde{u}_t^{\al})}{2, B}^2 \big)
        +
        \ze'(t) \Norm{(\eta\, \tilde{u}_t^{\al})}{2,B}^2
      \Big)\;
    \md t
    \\[.5ex]
    &\;\geq\;
    \ze(s) \Norm{(\eta\, \tilde{u}_s^{\al})}{2, B}^2
    \,-\,
    \|\ze'\|_{\raisebox{-.0ex}{$\scriptstyle L^{\raisebox{.2ex}{$\scriptscriptstyle {\!\infty\!}$}} (I)$}}
    \, \int_{s_1}^{s_2} \Norm{|u_t|^{2\al}}{1, B}\; \md t
  \end{align*}
  for any $s \in [s_1, s_2)$.  Thus, by multiplying both sides of \eqref{eq:poisson:energy} with $\ze$ and integrating the resulting inequality over $[s, s_2]$ for any $s \in I$ and  by applying the H\"older and Jensen inequality one obtains the inequality \eqref{eq:mos:energy} separately for each of the two terms in the left-hand side of \eqref{eq:mos:energy}.
\end{proof}
\begin{proof}[Proof of Theorem~\ref{thm:max_ineq}]
  For any $p, p' \in (1, \infty)$, let $p_* \ldef p/(p-1)$ and $p_*' \ldef p'/(p'-1)$ be the H\"older conjugate of $p$ and $p'$, respectively.  Further, set
  \begin{align}\label{eq:def:alpha}
    \al
    \;\ldef\;
    \frac{1}{p_*}
    \,+\,
    \frac{1}{p_*'}\bigg(1 - \frac{1}{\rho} \bigg)\, \frac{q'}{q'+1}
    \qquad \text{and} \qquad
    \al_k
    \;\ldef\;
    \al^k,
  \end{align}
  where $\rho$ is defined in \eqref{eq:def:rho}.  Notice that for any $p, p', q, q' \in (1,\infty]$ for which \eqref{eq:cond:pq} is satisfied, $\al > 1$ and therefore $\al_k \geq 1$ for every $k \in \bbN_0$.  In particular, $\al > 1$ implies that $\al p_*' > q'/(q'+1)$ and $\al p_* \leq \rho$ so that Lemma~\ref{lemma:mos:interp} is applicable.

  For some $\De \in [0,1)$, let $n \geq 2(N_1(x_0) \vee N_2(x_0)) \vee N(\De)$, where $N(\De) < \infty$ is such that $n^{1 - \De} / (\ln n)^{(\ln 2)/(\ln \al)} \vee \me^{\al^2} \geq 2$ for all $n \geq N(\De)$.  Set $K \ldef \lfloor (\ln \ln n) / (\ln \al) \rfloor$.  In the sequel, fix some $1/2 \leq \si' < \si \leq 1$ with $\si - \si' > n^{-\De}$, and consider a sequence $\{ Q(\si_k n) : k \in \bbN_0\}$ of space-time cylinders, where
  \begin{align*}
    \si_k \;=\; \si' + 2^{-k} (\si - \si')
    \qquad \text{and} \qquad
    \tau_k \;=\; 2^{-k-1} (\si - \si'),
    \quad k \in \bbN_0.
  \end{align*}
  In particular, we have that $\si_k = \si_{k+1} + \tau_k$ and $\si_0 = \si$.  For abbreviation we write $I_k \ldef [t_0, t_0 + \si_k n^2]$, $B_k \ldef B(x_0, \si_k n)$ and $Q_k \ldef I_k \times B_k$.  Note that $|I_k|/|I_{k+1}| \leq 2$ and $|B_k| / |B_{k+1}| \leq C_{\mathrm{reg}}^2 2^d$.

  Let us emphasise that $B_{k} \subsetneq B_{k-1}$ for any $k = 1, \ldots, K$, since
  \begin{align*}
    (\si_{k-1} - \si_k)n
    \;=\;
    2^{-k}(\si - \si')n
    \;>\;
    \frac{n^{1-\De}}{(\ln n)^{(\ln 2)/(\ln \al)}}
    \;\geq\;
    2,
    \qquad \forall\, k \in \{1, \ldots, K\}.
  \end{align*}
  Hence, we can define a sequence $\{\eta_k : k \in \bbN_0\}$ of cut-off functions in space such that $\supp \eta_k \subset B_k$, $\eta_k \equiv 1$ on $B_{k+1}$, $\eta_k \equiv 0$ on $\partial B_k$ and $\norm{\nabla \eta_k}{\infty}{E} \leq 1/\tau_{k} n$.  Moreover, let $\{\ze_k \in C^{\infty}(\bbR) : k \in \bbN_0\}$ be a sequence of smooth cut-off functions in time having the properties that $\ze_k \equiv 1$ on $I_{k+1}$, $\ze_k \equiv 0$ on $[t_0 + \si_k n^2, \infty)$ and $\| \ze_k' \|_{\raisebox{-0ex}{$\scriptstyle L^{\raisebox{.1ex}{$\scriptscriptstyle \!\infty$}} (\bbR)$}} \leq  1 / \tau_k n^2$.

  First, in view of \eqref{eq:mos:interp} we have that
  \begin{align}\label{eq:mos:split}
    \Norm{\tilde{u}^{2 \al_k}}{\al p_*, \al p_*', Q_{k+1}}
    \;\leq\;
    \Norm{\tilde{u}^{2\al_k}}{1, \infty, Q_{k+1}}
    \,+\,
    \Norm{\tilde{u}^{2\al_k}}{\rho, q'/(q'+1), Q_{k+1}}.
  \end{align}
  By applying the space-time Sobolev inequality \eqref{eq:sob:ineq:2} to $\ze_k\,\eta_k \tilde{u}_t^{\al_k}$ and using that
  \begin{align*}
    \cE_t^{\om}(\eta_k \tilde{u}_t^{\al_k})
    \;\leq\;
    2\, \cE_{t, \eta_k^2}(\tilde{u}_t^{\al_k})
    \,+\,
    2\,\norm{\nabla \eta_k}{\infty}{E}^{2}\,
    \norm{|u_t|^{2\al_k} \mu_t^{\om}}{1\!}{B_k}
  \end{align*}
  we obtain
  \begin{align*}
    &\Norm{\tilde{u}^{2\al_k}}{\rho, q'/(q'+1), Q_{k+1}}
    \\[.5ex]
    &\mspace{36mu}\leq\;
    c\, n^2 \Norm{\nu^{\om}}{q, q', Q_k}\,
    \Bigg(
      \frac{1}{|I_k|}\,
      \int_{I_k}\!
        \ze_k(t)\,
        \frac{\cE_{t,\eta_k^2}^{\om}(\tilde{u}_t^{\al_k})}{|B_k|}\;
      \md t
      \,+\,
      \frac{1}{(\tau_k n)^2}\, \Norm{|u|^{2 \al_k} \mu^{\om}}{1, 1, Q_k}
    \Bigg).
  \end{align*}
  Recall that $|\nabla f(e)| \leq 1/n$ for all $e \in E$.  Thus, by means of Jensen's inequality, the energy estimate \eqref{eq:mos:energy} implies that
  \begin{align}\label{eq:mos:max:time}
    &\frac{1}{|I_k|}\, \Norm{\tilde{u}^{2 \al_k}}{1, \infty, Q_{k+1}}
    \,+\,
    \frac{1}{|I_k|}
    \int_{I_k}\!
      \ze_k(t)\, \frac{\cE_{t, \eta_k^2}^{\om}(\tilde u_t^{\al_k})}{|B_k|}\;
    \md t
    \nonumber\\[.5ex]
    &\mspace{144mu}\leq\;
    c\, \Norm{\mu^{\om}}{p, p', Q_k} \bigg(\frac{\al_k}{\tau_k n} \bigg)^{\!\!2}
    \Norm{u}{2 \al_k p_*, 2 \al_k p_*', Q_k}^{2 \al_k \ga_k},
  \end{align}
  where $\ga_k = 1$ if $\Norm{u}{2 \al_k p_*, 2 \al_k p_*', Q_k} \geq 1$ and $\ga_k = 1 - 1/\al_k$ if $\Norm{u}{2 \al_k p_*, 2 \al_k p_*', Q_k} < 1$.  Hence, by combining these two estimates with \eqref{eq:mos:split}, we find that
  \begin{align}\label{eq:mos:iter:step}
    &\Norm{u}{2 \al_{k+1} p_*, 2\al_{k+1} p_*', Q_{k+1}}
    \nonumber\\[.5ex]
    &\mspace{36mu}\leq\;
    \bigg(
      c\, \frac{2^{2k} \al_k^2}{(\si - \si')^2}\,
      \Norm{1 \vee \mu^{\om}}{p, p', Q(n)}\, \Norm{1 \vee \nu^{\om}}{q, q', Q(n)}
    \bigg)^{\!\!1/(2\al_k)}\,
    \Norm{u}{2 \al_{k} p_*, 2\al_{k} p_*', Q_{k}}^{\ga_k}.
  \end{align}
  Further, observe that $|B_{K}|^{1/2\al_{K-1}} \leq c < \infty$ uniformly in $n$.  Hence, an application of \eqref{eq:mos:max:time} yields
  \begin{align*}
    &\max_{(t,x) \in Q(\si' n)} |u(t,x)|
    \\[.5ex]
    &\mspace{36mu}\leq\;
    \max_{(t,x) \in Q_{K}} |u(t,x)|
    \;\leq\;
    |B_{K}|^{1/(2\al_{K-1})}\,
    \Norm{\tilde{u}^{2\al_{K-1}}}{1, \infty, Q_{K}}^{1/(2\al_{K-1})}
    \nonumber\\[.5ex]
    &\mspace{36mu}\leq\;
    \bigg(
      c\,
      \frac{2^{2 (K-1)} \al_{K-1}^2}{(\si - \si')^2}\, 
      \Norm{\mu^{\om}}{p, p', Q(n)}
    \bigg)^{\!\!1/(2\al_{K-1})}
    \Norm{u}{2\al_{K-1} p_*, 2 \al_{K-1} p_*', Q_{K-1}}^{\ga_{K-1}}.
  \end{align*}
  By iterating the inequality \eqref{eq:mos:iter:step}, we get
  \begin{align*}
    &\max_{(t,x) \in Q(\si' n)} |u(t,x)|
    \\[.5ex]
    &\mspace{36mu}\leq\;
    C_2\, \prod_{k=1}^{K-1}
    \Bigg(
      \frac{\Norm{1 \vee \mu^{\om}}{p, p', Q(n)}\, 
        \Norm{1 \vee \nu^{\om}}{q, q',Q(n)}}{(\si - \si')^2}
    \Bigg)^{\!\!1/(2\al_k)}\!\!
    M_{\gamma}\big(\Norm{u}{2 \al p_*, 2 \al p_*', Q(\si n)}\big),
  \end{align*}
  where $0 < \ga = \prod_{k=1}^{\infty} (1-1/\al_k) \leq 1$ and $C_2 < \infty$ is a constant independent of $k$, since $\sum_{k=0}^{\infty}k/\al_k < \infty$.  Finally, by choosing $\ka = \frac{1}{2} \sum_{k=0}^\infty 1/\al_k <\infty$, the claim follows.
\end{proof}
\begin{corro}
  Suppose that the assumptions of Theorem~\ref{thm:max_ineq} are satisfied.  Additionally, assume that there exist $C_{3}< \infty$ and $N_3(x_0, t_0) < \infty$ such that
  \begin{align}\label{eq:bound:claim}
    \max_{t \in [t_0, t_0 + n^2]} \Norm{u_t}{1, B(x_0, n)}
    \;\leq\;
    C_3
    \qquad \forall\, n \geq N_3(x_0, t_0).
  \end{align}
  Then, there exists $\ga' \equiv \ga'(d', p, p', q, q') \in (0,1]$, $\ka' \equiv \ka'(d',p,p',q,q') \in (1, \infty)$ and $C_4(C_3) < \infty$  such that for all $n \geq \max\{2N_1(x_0), 2N_2(x_0), N_3(x_0, t_0)\}$
  \begin{align}\label{eq:max_ineq:improved}
    &\max_{(t,x) \in Q(\frac{1}{2} n)} |u(t,x)|
    \;\leq\;    
    C_4\,
    \Big(
      \Norm{1 \vee \mu^{\om}}{p, p'\!, Q(n)}\,
      \Norm{1 \vee \nu^{\om}}{q, q'\!, Q(n)}
    \Big)^{\!\!\ka'}\,
    M_{\ga'}\big(\Norm{u}{1, 1, Q(n)}\big).
  \end{align}
\end{corro}
\begin{proof}
  We shall show that, in view of \eqref{eq:bound:claim}, the assertion of Theorem~\ref{thm:max_ineq} can be strengthened by adapting the arguments in \cite[Theorem 2.2.3]{S-C02}.  In the sequel, let $\si' \ldef 1/2$, $\si \ldef 1$ and set $\si_k \ldef \si - 2^{-k}(\si - \si')$ for any $k \in \bbN_0$.  Note that $\si_k$ increases in $k$ with $\si_0 = \si'$ and $\lim_{k \to \infty} \si_k = \si$.  Then, by H\"older's inequality we have
  \begin{align*}
    \Norm{u}{2\al p_*, 2\al p_*', Q(\si_k n)}
    \;\leq\;
    \Norm{u}{1, 1, Q(\si_k n)}^{\th}\,
    \Norm{u}{\infty, \infty, Q(\si_k n)}^{1-\th}
    \qquad \forall\, k \in \bbN_0,
  \end{align*}
  where $\th = 1 / \max\{2\al p_*, 2\al p_*'\}$.  Set $K \ldef \lfloor (\ln n) / (2 \ln 2)  \rfloor$.  Then, for all $n \geq 2^4$, it holds that $K \geq 2$ and $(\si_k - \si_{k-1}) \geq 1/\sqrt{n} > n^{-3/4}$ for any $k \in \{1, \ldots, K-1\}$.  By applying the maximal inequality \eqref{eq:mos:general}, we get for every $k \in \{1, \ldots, K-1\}$,
  \begin{align*}
    \Norm{u}{\infty, \infty, Q(\si_{k-1} n)}
    \;\leq\;
    2^{2 \ka k}\, J\,
    M_{\ga}\big(
      \Norm{u}{1, 1, Q(\si n)}^{\th}\,
      \Norm{u}{\infty, \infty, Q(\si_k n)}^{1 - \th}
    \big),
  \end{align*}
  where we introduced $J = c\, \big( \Norm{1 \vee \mu^{\om}}{p, p', Q(n)}\, \Norm{1 \vee \nu^{\om}}{q, q',Q(n)} \big)^{\ka}$ to simplify notation.  Further, note that $M_{\ga}(x y) \leq M_{\ga}(x) M_{\ga}(y)$ and $M_{\ga^i}(M_{\ga^j}(x)) = M_{\ga^{i+j}}(x)$ for all $x,y \in [0, \infty)$ and $i,j \in \bbN_0$.  By iterating the inequality above, we obtain
  \begin{align*}
    \Norm{u}{\infty, \infty, Q(\si_0 n)}
    &\;\leq\;
    2^{2\ka \sum_{k=0}^{K-2} (k+1)(1 - \th)^k}
    J^{\sum_{k=0}^{K-2}(1-\th)^k}
    \Bigg(  
      \prod_{k=0}^{K-2}
      M_{\ga^{k+1}}\big(\Norm{u}{1, 1, Q(\si n)\!}^{\th}\big)^{(1-\th)^k}
    \Bigg)
    \\[.5ex]  
    &\mspace{36mu}\times\,
    M_{\ga^{K-1}}\big(
      \Norm{u}{\infty, \infty, Q(\si_{K-1} n)}
    \big)^{(1 - \th)^{K-1}}.
  \end{align*}
  Further, observe that $\prod_{k=0}^{K-2} M_{\ga^{k+1}}\big(\Norm{u}{1, 1, Q(\si n)\!}^{\th}\big)^{(1-\th)^k} \leq M_{\ga \th}\big( \Norm{u}{1, 1, Q(\si n)\!} \big)$ and $|B(x_0, \si_{K-1} n)|^{(1-\th)^{K-1}} \leq c < \infty$ uniformly in $n$.  Hence, in view of \eqref{eq:bound:claim},
  \begin{align*}
    \Norm{u}{\infty, \infty, Q(\si' n)}
    \;\leq\;
    c\, C_3\, 2^{2\ka/\th^2}\, J^{1/\th}\,
    M_{\ga \th}\big(\Norm{u}{1, 1, Q(\si n)\!} \big),
  \end{align*}
  and the assertion \eqref{eq:max_ineq:improved} follows with $\ka' = \ka/\th$ and $\ga' = \ga \th$.
\end{proof}
\begin{proof}[Proof of Proposition~\ref{prop:maxin_lattice}]
  Recall that $\bbZ^d$ satisfies Assumption~\ref{ass:graph} with $d'=d$ and $N_1(x) = N_2(x) = 1$.  Moreover, for any $j \in \{1, \ldots, d\}$, the function $(t, x) \mapsto n^{-1} \chi^j(\om, t, x)$ solves the equation $\partial_t u + \cL_t^{\om} u = \nabla^* V_t^{\om}$ on $\bbR \times \bbZ^d$ with $V_t^{\om}(e) = \om_t(e) \nabla f(e)$ and $f(x) = n^{-1} x^j$.  Then, the assertion of Proposition~\ref{prop:maxin_lattice} follows from \eqref{eq:max_ineq:improved} with the choice $x_0 = 0$, $t_0 = 0$ and $n$ replaced by $2n$ once we have shown that \eqref{eq:bound:claim} holds true.

  In order to prove \eqref{eq:bound:claim}, recall that $x^j = \Phi^j + \chi^j$, where $(t,x) \mapsto \Phi^j(\om, t, x)$ solves the equation $\partial_t u + \cL_t^{\om} u = 0$ on $\bbR \times \bbZ^d$.  Hence, it suffices to show that $\prob$-a.s.
  \begin{align}\label{eq:bound:claim2}
    \limsup_{n \to \infty} \sup_{t \in [0, n^2]} 
    \Norm{n^{-1} \Phi^j(\om, t, \cdot)}{1, B(0, n)}
    \;\leq\;
    c
    \;<\;
    \infty.
  \end{align}
  Set $u = n^{-1} \Phi^j$, and rewrite the solution $u$ as $u = v^+ - v^-$, where
  \begin{align*}
    v_t^+(x)
    \;\ldef\;
    \sqrt{u_t^2(x)+ 1}
    \qquad \text{and} \qquad
    v_t^-(x)
    \;=\;
    \sqrt{u_t^2(x) + 1} - u_t(x)
  \end{align*}
  are positive subsolutions, that is $\partial_t v^{\pm} + \cL_t^{\om} v^{\pm} \geq 0$.  Further, let $\eta\!: \bbZ^d \to [0, 1]$ be a cut-off function in space with the property that $\supp \eta \subset B(2n)$, $\eta \equiv 1$ on $B(n)$, $\eta \equiv 0$ on $\partial B(2n)$ and $\norm{\nabla \eta}{\infty}{E} \leq 1/n$.  Moreover, let $\ze\!: \bbR \to [0,1]$ be a smooth cut-off function in time such that $\ze \equiv 1$ on $[0, n^2]$, $\ze \equiv 0$ on $[2n^2, \infty)$ and $\| \ze' \|_{\raisebox{-0ex}{$\scriptstyle L^{\raisebox{.1ex}{$\scriptscriptstyle \!\infty$}} (\bbR)$}} \leq  c / n^2$.  Then,
  \begin{align*}
    \partial_t\Big(\ze(t) \scpr{\eta}{v_t^{\pm}}{\bbZ^d}\Big)
    &\;\geq\;
    \ze(t)\, \scpr{\nabla \eta}{\om_t \nabla v_t^{\pm}}{E_d} 
    + \ze'(t)\, \scpr{\eta}{v_t^{\pm}}{\bbZ^d}
    \\[.5ex]
    &\;\geq\;
    - \scpr{|\nabla \eta|}{\om_t |\nabla v_t^{\pm}|}{E_d}
    - \| \ze' \|_{\raisebox{-0ex}{$\scriptstyle L^{\raisebox{.1ex}{$\scriptscriptstyle \!\infty$}} (\bbR)$}}
    \scpr{\eta}{v_t^{\pm}}{\bbZ^d}.
  \end{align*}
  Thus, by integrating this inequality over the interval $[t, 2n^2]$ for $t \in [0, n^2]$, we obtain
  \begin{align*}
    \Norm{v_t^{\pm}}{1, B(n)}
    \;\leq\;
    c\, 
    \int_0^{2n^2}
      \frac{\scpr{|\nabla \eta|}{\om_t |\nabla v_t^{\pm}|}{E}}{|B(2n)|}\;
    \md t
    \,+\, c\, \Norm{v^{\pm}}{1, 1, Q(2n)}.
  \end{align*}
  Further, note that $|\nabla v_t^{\pm}(e)| \leq 2 |\nabla u_t(e)|$ and $|v_t^{\pm}(x)| \leq 2 |u_t(x)| + 1$. Hence,
  \begin{align*}
    &\Norm{n^{-1} \Phi^j}{1, \infty, Q(n)}
    \\[.5ex]
    &\mspace{36mu}\leq\;
    \frac{cn}{2n^2}
    \int_0^{2n^2} \mspace{-18mu}
      \frac{1}{|B(2n)|}\! \sum_{\substack{x \in B(2n) \\ y \sim x}}\mspace{-9mu}
      \om_t(x, y) |u_t(x) - u_t(y)|\;
    \md t
    +
    c \big(1 + \Norm{u}{1,1,Q(2n)}\big)
    \\[.5ex]
    &\mspace{30mu}\overset{\!\!\!\eqref{eq:harm_coord2}\!\!\!}{\;=\;}
    \frac{c}{2n^2}
    \int_0^{2n^2} \mspace{-18mu}
      \frac{1}{|B(2n)|}\! \sum_{\substack{x \in B(2n) \\ y \sim 0}}\mspace{-9mu}
      \om_0(0, y) |\Phi_0^j(\om, y)| \circ \tau_{t,x}\;
    \md t
    +
    c \big(1 + \Norm{n^{-1} \Phi^j}{1,1,Q(2n)}\big).
  \end{align*}
  Since $\Phi_0^{j} \in L_{\mathrm{cov}}^2$, $\mean[\om(e)] < \infty$ and $\Norm{n^{-1} \Phi^j}{1,1,Q(2n)} \leq c(1 + \Norm{n^{-1} \chi^j}{1,1,Q(2n)})$, an application of the ergodic theorem, the Cauchy-Schwarz inequality and Proposition~\ref{prop:sublinearity:l1} completes the proof of \eqref{eq:bound:claim2}.
\end{proof}

\appendix
\section{Technical estimates}
For the reader's convenience we provide some technical estimates needed in Section~\ref{sec:mos_it} in order to process the Moser iteration.  We refer to \cite[Appendix~A]{ADS15} for a proof.  In a sense, they may be regarded as a replacement for a discrete chain rule.
\begin{lemma}
  For $a\in \bbR$, we write $\tilde a^{\al}:=|a|^{\al} \cdot \sign a$ for any $\al \in \bbR \setminus \{0\}$.
  \begin{enumerate}[ (i) ]
    \item For all $a,b \in \bbR$ and any $\al, \be \ne 0$,
      \begin{align}\label{eq:A1:chain:ub1}
        \big| \tilde a^{\al} - \tilde b^{\al} \big|
        \;\leq\;
        \Big(1 \vee \Big|\frac{\al}{\be}\Big| \Big) \,
        \big|\tilde{a}^{\be} - \tilde{b}^{\be}\big|\,
        \big(\,|a|^{\al-\be} + |b|^{\al-\be} \big).
      \end{align}
    \item For all $a,b \in \bbR$ and any $\al > 1/2$,
      \begin{align}\label{eq:A1:pol:ub}
        \big(\tilde{a}^{\al} - \tilde{b}^{\al}\big)^2
        \;\leq\;
        \bigg|\frac{\al^2}{2\al-1}\bigg|\, \big(a - b\big)\,
        \big(\tilde{a}^{2\al-1} - \tilde{b}^{2\al-1}\big).
      \end{align}
      In particular, if $a,b \in \bbR_+$ then \eqref{eq:A1:pol:ub} holds for all $\al \not\in \{0,1/2\}$.
      %
    \item For all $a,b \in \bbR$ and any $\al \geq 1/2$,
      \begin{align}\label{eq:A1:chain:ub2}
        \big(|a|^{2\al-1} + |b|^{2\al-1}\big)\, \big|a - b\big|
        \;\leq\;
        4\,
        \big|\tilde a^{\al} - \tilde b^{\al}\big|\,
        \big(|a|^{\al} + |b|^{\al}\big).
      \end{align}
  \end{enumerate}
\end{lemma}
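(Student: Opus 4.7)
The three inequalities are discrete surrogates for the chain rule, and the strategy for each is first to reduce to the non-negative case and then to exploit an integral representation of the difference $\tilde a^{\al} - \tilde b^{\al}$ via the fundamental theorem of calculus. Throughout, by symmetry (swapping the roles of $a$ and $b$) we may assume $|a| \geq |b|$, and we split on whether $ab \geq 0$ or $ab < 0$.

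\textbf{Same-sign reduction.} When $a$ and $b$ have the same sign, say $a \geq b \geq 0$, all three inequalities reduce to statements purely about the function $t \mapsto t^{\al}$ on $[b,a]$. For (i) I would write
\begin{align*}
  a^{\al} - b^{\al} \;=\; \al \int_b^a t^{\al - 1}\, \md t
  \qquad \text{and} \qquad
  a^{\be} - b^{\be} \;=\; \be \int_b^a t^{\be - 1}\, \md t,
\end{align*}
estimate the first integral by $|\al|(a-b)\,(a^{\al-1} \vee b^{\al-1})$ (splitting into $\al \geq 1$ and $\al < 1$ if necessary to orient the monotonicity), and bound the second from below by $|\be|(a-b)\,(a^{\be-1} \wedge b^{\be-1})$; the ratio then collapses to at most $|\al/\be|(a^{\al-\be} + b^{\al-\be})$, and the factor $1 \vee |\al/\be|$ absorbs the case in which the exponents are ordered the opposite way. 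For (ii), the same integral representation together with the Cauchy--Schwarz inequality yields
\begin{align*}
  (a^{\al} - b^{\al})^{2}
  \;=\;
  \biggl(\int_b^a \al\, t^{\al - 1}\, \md t\biggr)^{\!2}
  \;\leq\;
  \al^{2} (a-b) \int_b^a t^{2\al - 2}\, \md t
  \;=\;
  \frac{\al^{2}}{2\al - 1}\, (a-b)\,(a^{2\al - 1} - b^{2\al - 1}),
\end{align*}
which is exactly the non-negative case of (ii) for $\al > 1/2$ (and the same identity, read with the appropriate sign of $2\al - 1$, covers the remaining exponents for positive $a,b$ mentioned in the ``in particular'' clause). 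For (iii), one notes that on $[b,a]$ with $b \geq 0$,
\begin{align*}
  (a^{2\al - 1} + b^{2\al - 1})(a - b)
  \;\leq\;
  2\, a^{\al}(a^{\al - 1} + b^{\al - 1})(a - b)
  \;\leq\;
  2(a^{\al} + b^{\al})(a^{\al} - b^{\al}),
\end{align*}
where the last step again uses the integral representation of $a^{\al} - b^{\al}$ to compare it with $(a^{\al - 1} + b^{\al - 1})(a-b)/2$, i.e.\ a trapezoidal lower bound; the loss is absorbed in the constant $4$.

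\textbf{Opposite-sign case.} When $a > 0 > b$, write $b = -c$ with $c > 0$. Then $\tilde a^{\al} - \tilde b^{\al} = a^{\al} + c^{\al}$ and $a - b = a + c$, and each inequality splits into a clean algebraic assertion in $a,c \geq 0$. For (i) one uses $a^{\al} + c^{\al} \leq (a^{\be} + c^{\be})(a^{\al - \be} + c^{\al-\be})$ (immediate by expansion). For (ii),
\begin{align*}
  (a^{\al} + c^{\al})^{2}
  \;\leq\;
  2\,(a^{2\al} + c^{2\al})
  \;\leq\;
  \frac{\al^{2}}{2\al - 1}\, (a+c)\,(a^{2\al - 1} + c^{2\al - 1}),
\end{align*}
which I would check by comparing it term by term with the same-sign bound applied separately to $(a,0)$ and $(0,c)$; the prefactor $\al^{2}/(2\al - 1)$ is at least $2$ for $\al > 1/2$. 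Inequality (iii) follows analogously by splitting $(a^{2\al - 1} + c^{2\al - 1})(a + c) \leq 4(a^{\al} + c^{\al})^{2}$ into the four cross-products and bounding each $a^{2\al}$, $c^{2\al}$, $a^{2\al - 1} c$, $a\, c^{2\al - 1}$ by $(a^{\al} + c^{\al})^{2}$.

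\textbf{Main obstacle.} I expect the technical irritation to come from (i): because $\al$ and $\be$ are unrelated real exponents (possibly of different sign or straddling $1$), the comparison of $\int_b^a t^{\al - 1} \md t$ with $\int_b^a t^{\be - 1} \md t$ has to be done in cases depending on the sign of $\al - \be$ and whether $a/b$ is bounded or not. Getting a uniform constant of exactly $1 \vee |\al/\be|$ (rather than something larger) requires handling $|\al| \leq |\be|$ and $|\al| \geq |\be|$ separately, and in the former case one trades the loss into the factor $|a|^{\al - \be} + |b|^{\al - \be}$. Once those bookkeeping cases are written out, the inequalities fall out from the integral representations, and the constants in (ii) and (iii) are sharp enough for the Moser iteration carried out in Section~\ref{sec:mos_it}.
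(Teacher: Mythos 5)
Your overall strategy (integral representations plus a same-sign/opposite-sign split) is reasonable, but several of the concrete estimates in your chains do not hold, so the proof as written has genuine gaps. The most serious is in the same-sign case of (i): bounding $|a^{\al}-b^{\al}|$ from above by $|\al|(a-b)(a^{\al-1}\vee b^{\al-1})$ and $|a^{\be}-b^{\be}|$ from below by $|\be|(a-b)(a^{\be-1}\wedge b^{\be-1})$, and then dividing, does not yield $|\al/\be|\,(a^{\al-\be}+b^{\al-\be})$. Taking $\al=\be=2$, $a=10$, $b=1$, your upper and lower bounds are $180$ and $18$, so the ratio of bounds is $10$, while the claimed majorant is $|\al/\be|(a^{0}+b^{0})=2$. (The statement itself is trivially true when $\al=\be$; it is the chain of inequalities that fails.) The fix is not to compare two integrals: substitute $s=\tilde a^{\be}$, $t=\tilde b^{\be}$, $\ga=\al/\be$, reduce \eqref{eq:A1:chain:ub1} to $|\tilde s^{\ga}-\tilde t^{\ga}|\le(1\vee|\ga|)\,|s-t|\,(|s|^{\ga-1}+|t|^{\ga-1})$, and prove that from the single integral representation by a two-case monotonicity argument in $\ga$.

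Two further steps are also false as written. In the opposite-sign case of (ii) you invoke $\al^2/(2\al-1)\ge 2$ for $\al>1/2$; this is false (at $\al=1$ the quantity is $1$, and for $\al\in(2-\sqrt2,\,2+\sqrt2)$ it is below $2$), and indeed your chain at $\al=1$ would force $2(a^2+c^2)\le(a+c)^2$. What you actually need is $\al^2/(2\al-1)\ge 1$, which is $(\al-1)^2\ge 0$, combined with AM--GM on the cross terms:
\begin{align*}
  (a+c)\,(a^{2\al-1}+c^{2\al-1})
  \;=\;
  a^{2\al}+c^{2\al}+a c^{2\al-1}+c a^{2\al-1}
  \;\ge\;
  a^{2\al}+c^{2\al}+2 a^{\al}c^{\al}
  \;=\;
  (a^{\al}+c^{\al})^2,
\end{align*}
so the detour through $2(a^{2\al}+c^{2\al})$ is both unnecessary and wrong. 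In (iii), same-sign case, your second inequality $a^{\al}(a^{\al-1}+b^{\al-1})(a-b)\le(a^{\al}+b^{\al})(a^{\al}-b^{\al})$ fails: for $\al=1/2$, $b=1$, $a\to\infty$ the left side grows like $a^{3/2}$ but the right side is $a-1$; the ``trapezoidal lower bound'' has the wrong sign here because $t\mapsto t^{\al-1}$ changes convexity at $\al=1$ and $\al=2$. A working route is to rewrite the target as $(a^{2\al-1}+b^{2\al-1})(a-b)\le 4(a^{2\al}-b^{2\al})$, normalize $b=1$, and check that $x\mapsto 3x^{2\al}+x^{2\al-1}-x-3$ vanishes at $x=1$ and has derivative $6\al x^{2\al-1}+(2\al-1)x^{2\al-2}-1\ge 6\al-1>0$ for all $x\ge 1$ and $\al\ge 1/2$.
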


\subsubsection*{Acknowledgment} We thank an anonymous referee for the extremely careful reading and a number of very constructive suggestions to improve an earlier version of the paper.  The proof of Proposition~\ref{prop:sublinearity:l1} resulted from a discussion between the second author and P.\ Bella, B.\ Fehrmann and F.\ Otto, for which we would like to thank them.

\bibliographystyle{abbrv}
\bibliography{literature}

\end{document}